\numberwithin{equation}{section}
\newtheorem{theorem}{Theorem}[section]
\newtheorem{proposition}[theorem]{Proposition}
\newtheorem{lemma}[theorem]{Lemma}
\newtheorem{corollary}[theorem]{Corollary}
\theoremstyle{definition}
\newtheorem{definition}[theorem]{Definition}
\theoremstyle{plain}
\newtheorem{conjecture}[theorem]{Conjecture}
\newcommand{\R}{\mathbb{R}}
\newcommand{\N}{\mathbb{N}}
\newcommand{\C}{\mathbb{C}}
\newcommand\F{\mathscr F}
\newcommand\soln{\mathscr S}
\newcommand\triv{\mathscr T}
\newcommand\nodal{\mathscr N}
\newcommand\Fg{\mathcal F}
\newcommand\cm{\mathscr C}
\newcommand\cmg{\mathcal C}
\newcommand\T{\mathbb T}
\newcommand\D{\mathbb D}
\renewcommand{\bar}{\overline} 
\newcommand\Ca{\mathcal C} 
\renewcommand       {\Im}  {\operatorname{Im}} 
\DeclareMathOperator{\ran} {\mathrm{ran}}
\renewcommand       {\Re}  {\operatorname{Re}}  
\newcommand         {\loc} {\mathrm{loc}}
\DeclareMathOperator{\dist}{\mathrm{dist}}
\DeclareMathOperator{\Span}{\mathrm{span}}
\newcommand{\dell}{\partial}
\newcommand{\grad}{\nabla}
\newcommand{\ona}{\textup{~on~}}
\newcommand{\ina}{\textup{~in~}}
\newcommand{\asa}{\textup{~as~}}
\newcommand{\ata}{\textup{~at~}}
\newcommand{\fora}{\textup{~for~}}
\newcommand{\n}[2][]{#1\lVert #2 #1\rVert}
\newcommand{\abs}[2][]{#1\lvert #2 #1\rvert}
\newcommand{\without}{\setminus}
\newcommand{\by}{\times}
\newcommand{\sub}{\subset}
\newcommand{\maps}{\colon}
\title{Global bifurcation of rotating vortex patches}
\author{Zineb Hassainia
  \and
  Nader Masmoudi
  \and 
  Miles H. Wheeler}
\begin{document}
\maketitle

\begin{abstract}
  We rigorously construct continuous curves of rotating vortex patch
  solutions to the two-dimensional Euler equations. The curves are
  large in that, as the parameter tends to infinity, the minimum along
  the interface of the angular fluid velocity in the rotating frame
  becomes arbitrarily small. This is consistent with the conjectured
  existence~\cite{woz:numerical, overman:limiting} of singular
  limiting patches with 90$^\circ$ corners at which the relative fluid
  velocity vanishes. For solutions close to the disk, we prove that
  there are ``Cat's eyes''-type structures in the flow, and provide
  numerical evidence that these structures persist along the entire
  solution curves and are related to the formation of corners. We also
  show, for any rotating vortex patch, that the boundary is analytic
  as soon as it is sufficiently regular.
\end{abstract}

\setcounter{tocdepth}{2}
\tableofcontents

\section{Introduction}
\subsection{Statement of the main results}
We consider the two-dimensional incompressible Euler equations,
written in terms of the vorticity $\omega$ and stream
function $\psi$ as 
\begin{align}
  \label{eqn:omega}
  \dell_t \omega + \grad^\perp \psi \cdot \grad \omega = 0, 
  \qquad 
  -\Delta \psi = \omega.
\end{align}
The fluid velocity is $\grad^\perp\psi = (-\psi_y,\psi_x)$. A vortex
patch is a (weak) solution of \eqref{eqn:omega} with $\omega(z,t) =
1_{D(t)}(z)$ for some simply-connected region $D(t)$. As is typically
done, we restrict to the case where the fluid is at rest at infinity.
We are interested in vortex patches for which, after moving to a
(non-inertial) frame rotating with constant angular velocity $\Omega$,
the region $D$ is stationary. The fluid velocity in the rotating
frame is then $\grad^\perp \Psi$ where the relative stream function
$\Psi = \psi - \tfrac 12 \Omega \abs z^2$ solves 
\begin{subequations} \label{eqn:Psi}
  \begin{align}
    \label{eqn:Psi:lap}
    \Delta \Psi &= 1_D - 2\Omega,\\
    \label{eqn:Psi:asym}
    \grad (\Psi + \tfrac 12 \Omega \abs z^2)&\to 0 \asa \abs z \to \infty,\\
    \label{eqn:Psi:reg}
    \Psi &\in C^1(\C),\\
    \label{eqn:Psi:kin}
    \Psi &= 0 \ona \dell D.
  \end{align}
\end{subequations}
This is a free boundary problem in that the domain $D$ and the
function $\Psi$ are both unknowns. Here and in what follows we
identify $(x,y) \in \R^2$ with $z = x+iy\in \C$ whenever convenient.

Somewhat informally stated, our main existence result for
\eqref{eqn:Psi} is the following.
\begin{theorem}\label{thm:informal}
  For any $m \ge 2$, there exists a continuous curve $\cm$ of rotating
  vortex patches with the symmetries of a regular $m$-gon,
  parametrized by $s \in [0,\infty)$, with the following properties.
  \begin{enumerate}[label=\rm(\alph*)]
  \item\label{thm:informal:bif} {\rm (Bifurcation from the disk)} The
    solution at $s=0$ is the unit disk $D = \D$ rotating with angular
    velocity $\Omega = (m-1)/2m$ and with the angular fluid velocity
    $\dell_r\Psi \equiv 1/2m$ on $\dell D$.
  \item\label{thm:informal:vanish} {\rm (Vanishing angular fluid
    velocity)} As $s \to \infty$,
    \begin{align}
      \label{eqn:vanish}
      \min_{\dell D} \dell_r \Psi \to 0,
    \end{align}
    i.e.~there are points $z(s) \in \dell D(s)$ where the
    angular fluid velocity becomes arbitrarily small.
  \item\label{thm:informal:nodal} {\rm (Monotonicity)}
    For each $s > 0$, the boundary of the patch can be expressed as a polar
    graph $r=R(\theta)$ where $R$ is even, $2\pi/m$-periodic, and
    satisfies
    \begin{align}
      \label{eqn:informal:nodal}
      R'(\theta) < 0 \fora 0 < \theta < \frac \pi m,
      \quad 
      R''(0) < 0,
      \quad 
      R''\Big(\frac \pi m\Big) > 0.
    \end{align}
  \item\label{thm:informal:reg} {\rm (Analyticity)} 
    For each $s \ge 0$, the boundary $\dell D$ (equivalently the
    function $R$ above) is analytic.
  \end{enumerate}
\end{theorem}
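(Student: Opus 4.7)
The plan is to recast \eqref{eqn:Psi} as a single nonlinear scalar equation $F(R,\Omega) = 0$ in a Banach space $X$ of even, $2\pi/m$-periodic radii $R$ of sufficient Hölder regularity, and then run a local-to-global bifurcation argument. Writing $D = \{re^{i\theta} : r < R(\theta)\}$, I would solve $\Delta \Psi = 1_D - 2\Omega$ with the decay \eqref{eqn:Psi:asym} via the Newtonian potential plus a rigid-rotation term, and then impose \eqref{eqn:Psi:kin} on $\dell D$ to obtain a single nonlinear boundary equation for $R$. For this equation $R \equiv 1$ is a trivial solution for every $\Omega$, and a direct Fourier computation shows that the Fréchet derivative $D_R F(1, \Omega)$ has a one-dimensional kernel spanned by $\cos m\theta$ precisely when $\Omega = \Omega_m := (m-1)/2m$, with the transversality hypothesis satisfied. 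Crandall--Rabinowitz then produces an analytic local curve of nontrivial solutions through $(1,\Omega_m)$, establishing part \ref{thm:informal:bif}.

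To extend this into a global curve $\cm$, I would invoke the analytic global bifurcation theory of Dancer and Buffoni--Toland. This requires verifying that $F$ is real analytic between the chosen Banach spaces, that $D_R F$ is Fredholm of index zero along the local branch, and a properness hypothesis on closed bounded subsets of the solution set. The output is a continuous, piecewise analytic curve $\cm$ parametrized by $s \in [0,\infty)$ together with the usual blow-up alternative: as $s \to \infty$, either $\n{R}_X + \abs{\Omega}$ is unbounded, the geometry of $D$ degenerates (self-intersection of $\dell D$, or $R$ touching zero), or $D_R F$ loses Fredholm invertibility in a way that violates properness.

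The next step is to propagate the nodal pattern \eqref{eqn:informal:nodal} along all of $\cm$ and use it to single out the correct alternative. Near $s=0$ the nontrivial solutions look like $1 + s\cos m\theta + O(s^2)$, so the strict inequalities in \eqref{eqn:informal:nodal} certainly hold on an initial segment; by analyticity of $\cm$ the subset on which they hold is open, while degeneracies at its boundary can be ruled out using the Hopf lemma applied to suitable derivatives of $\Psi$, exploiting the ellipticity of \eqref{eqn:Psi:lap} together with the discrete symmetry of $D$. This yields part \ref{thm:informal:nodal} globally and shows that $D$ stays star-shaped and embedded, ruling out self-intersection and $R \to 0$. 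Combined with integral identities (conservation of the patch area and a Pohozaev-type identity for $\Psi$) that control $\Omega$ and the low Fourier modes of $R$, this forces the only remaining breakdown mechanism to be a degeneracy of $D_R F$, which a calculation along $\dell D$ identifies with the vanishing of $\dell_r\Psi$ at some boundary point, yielding \eqref{eqn:vanish}. The hardest part will be this last identification step: translating the abstract loss of Fredholm invertibility into the concrete geometric statement \eqref{eqn:vanish}, while simultaneously maintaining the nodal pattern along the entire continuum.

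Part \ref{thm:informal:reg} is essentially decoupled from the bifurcation construction: it follows from the a posteriori regularity assertion advertised in the abstract. Since every solution on $\cm$ lies in $X$, and hence has at least $C^{2,\alpha}$ boundary regularity, and since $\Psi$ satisfies elliptic equations on each side of $\dell D$ with matching Cauchy data determined by $R$, a partial hodograph transform straightens the free boundary and converts \eqref{eqn:Psi:lap}, \eqref{eqn:Psi:kin} into a nonlinear elliptic problem with analytic coefficients on a fixed domain; standard analytic elliptic regularity then upgrades $R$ to an analytic function of $\theta$.
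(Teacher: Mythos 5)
Your overall architecture (analytic Crandall--Rabinowitz at $\Omega_m$, Dancer/Buffoni--Toland continuation, propagation of nodal properties, free-boundary regularity \`a la hodograph transform) matches the paper's, but the endgame --- the derivation of \eqref{eqn:vanish} --- contains a genuine gap. In the analytic global bifurcation theorem, Fredholmness of index zero of the linearization is a \emph{hypothesis} that must be verified at every solution in the open set where the operator is defined; ``loss of Fredholm invertibility'' is not one of the alternatives and cannot serve as the breakdown mechanism you identify with the vanishing of $\dell_r\Psi$. The correct structure is that the alternatives are a closed loop (which you never address; the paper rules it out precisely with the nodal properties) or the curve leaving every closed bounded subset of the admissible open set, and the open set must be chosen in advance to exclude the degeneracy $\grad\Psi=0$ on $\dell D$. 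The real work is then to show that \emph{every other} way of leaving the exhaustion --- blow-up of the H\"older norm of $R$, of $\abs\Omega$, vanishing of $\min R$, loss of graphicality --- is controlled by the relative fluid speed and the tangent angle. Your proposed substitutes (``conservation of the patch area and a Pohozaev-type identity'' controlling ``$\Omega$ and the low Fourier modes of $R$'') do not come close to a uniform $C^{1+\alpha}$ bound on $R$, which is what is needed; the paper obtains it from the conformal-map formulation via the Koebe $1/4$ theorem, a Gaier-type $L^p$ estimate on $\phi'$ in terms of the tangent angle, a basic elliptic estimate on $\dell_z\Psi$, and an explicit Riemann--Hilbert inversion formula for $\phi'$, and controls $\Omega$ by the rigidity theorems of Hmidi and Fraenkel at $\Omega=0,1/2$. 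A final step, also missing from your sketch, converts $\min_{\dell D}\abs{\grad\Psi}\to 0$ or tangent angle $\to\pi/2$ into $\min_{\dell D}\Psi_r\to 0$, using the boundedness of $\Psi_\theta/r$.

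A secondary concern is the formulation. Working directly with the polar graph $R(\theta)$ is legitimate in principle, but the analyticity of the nonlinear operator, the Fredholm property, and the compactness/properness hypothesis are exactly the points where the paper leans on the conformal-mapping formulation: the Cauchy integral operator depends analytically on the parametrization (Lanza de Cristoforis--Preciso), and the equation becomes a quasilinear Riemann--Hilbert problem $\Im(A\phi')=0$ whose explicit invertibility yields both the index-zero property and the bootstrap giving compactness of bounded solution sets. In the $R$-formulation you would need to supply equivalents of all of these, and the naive Schauder regularity of $\grad\Psi$ loses a derivative relative to what the Riemann--Hilbert structure provides. The local bifurcation, nodal-propagation, and analyticity parts of your plan are sound and essentially coincide with the paper's (the Kinderlehrer--Nirenberg--Spruck theorem the paper cites is itself proved by a hodograph transform, though you should note that it requires the nonvanishing of $\dell\Psi/\dell n$ on $\dell D$, which must be proved by a maximum-principle argument).
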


\begin{figure}
  \centering
  \includegraphics[scale=1.1]{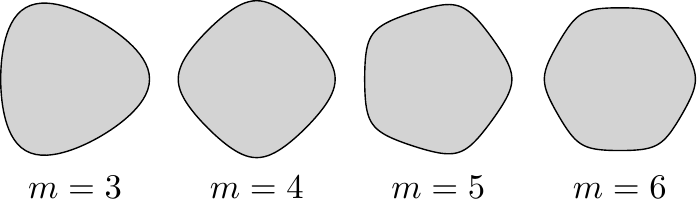} 
  \caption{Patches with various symmetry classes $m$.}
  \label{fig:symmetry}
\end{figure}
\begin{figure} 
  \centering
  \includegraphics[scale=1.1]{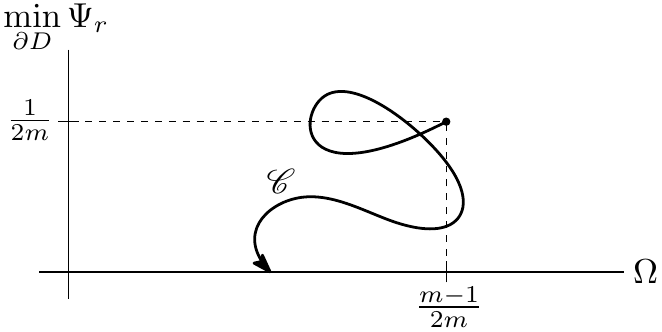} 
  \caption{Sketch of the global bifurcation curve $\cm$ constructed in
  Theorem~\ref{thm:informal}.}
  \label{fig:cmsketch}
\end{figure}

Figure~\ref{fig:symmetry} shows regions $D$ satisfying
\ref{thm:informal:nodal} for various values of $m$. Parts
\ref{thm:informal:bif} and \ref{thm:informal:vanish} of
Theorem~\ref{thm:informal} are illustrated in
Figure~\ref{fig:cmsketch}. Note that the curve $\cm$ is \emph{global}
in that it is not contained in a small neighborhood of its starting
point. Indeed, $\min_{\dell D}\Psi_r = 1/2m$ at the start of $\cm$
while the limiting value is $0$. The analyticity
\ref{thm:informal:reg} is true for any sufficiently smooth solution
satisfying a non-degeneracy condition; see Theorem~\ref{thm:freereg}.
For the precise sense in which $\cm$ is a continuous curve, see
Theorem~\ref{thm:global} and Section~\ref{sec:space}.

\begin{figure}[hb]
  \centering
  \includegraphics[scale=1.1]{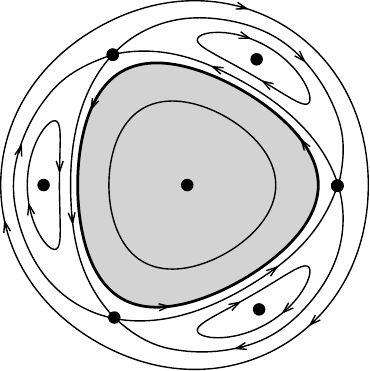} 
  \caption{Phase portrait for the ODE $\dot z = \grad^\perp \Psi$
  showing ``Cat's eyes''.}
  \label{fig:phasealone}
\end{figure}

For $s$ sufficiently small, we also prove the existence of ``Cat's
eye''-type structures in the flow outside of the patch. 
\begin{theorem}[Cat's eyes for small $s$]\label{thm:phase}
  Let $\cm$ be as in Theorem~\ref{thm:informal} and let $s > 0$ be
  sufficiently small. Then the phase portrait of $\dot z = \grad^\perp
  \Psi(z)$ outside of $D$ looks qualitatively like
  Figure~\ref{fig:phasealone}. In particular, there are $m$ saddle points,
  with adjacent saddle points connected by pairs of heteroclinic
  orbits. These heteroclinic orbits enclose regions of periodic orbits
  surrounding $m$ centers. All other orbits are polar graphs $r =
  \tilde r(\theta)$.
\end{theorem}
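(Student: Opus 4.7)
At $s = 0$ the patch is $\D$ and \eqref{eqn:Psi:lap}--\eqref{eqn:Psi:kin} integrate to the radial exterior stream function
\begin{equation*}
  \Psi_0(r) = -\tfrac12\Omega r^2 + \tfrac12\log r + \tfrac12\Omega, \qquad r \ge 1,
\end{equation*}
with $\Omega = (m-1)/2m$; hence $\Psi_0'(r) = -\Omega r + 1/(2r)$ vanishes uniquely at the co-rotation radius $r_c = \sqrt{m/(m-1)}$, with $\Psi_0''(r_c) = -2\Omega \ne 0$. The flow $\dot z = \grad^\perp \Psi_0$ is purely azimuthal and every point of $\{|z|=r_c\}$ is a degenerate equilibrium. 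The plan is to show that turning on $s > 0$ splits this circle into $m$ hyperbolic saddles alternating with $m$ linear centers via a $\cos(m\theta)$ perturbation, that the heteroclinic connections are then forced by the discrete symmetries of $\cm$, and that all remaining orbits are polar graphs.

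Along $\cm$ write $\Psi = \Psi_0 + s\Psi_1 + O(s^2)$ with $R(\theta) = 1 + s\cos(m\theta) + O(s^2)$ and $\Omega = \Omega_0 + O(s^2)$ (so $\Omega_1 = 0$ from the bifurcation structure). Then $\Psi_1$ is harmonic outside $\D$, and the linearised boundary condition $\Psi_1(1,\theta) = -\Psi_0'(1)\cos(m\theta) = -\cos(m\theta)/(2m)$, together with the $m$-fold rotation and reflection symmetries of $\cm$, decay of $\grad\Psi_1$ at infinity, and area conservation at order $s$ (which kills a possible $\log r$ term), forces
\begin{equation*}
  \Psi_1(r,\theta) = -\frac{1}{2m}\, r^{-m}\cos(m\theta), \qquad r\ge 1.
\end{equation*}
Differentiating gives $\partial_\theta\Psi = (s/2) r^{-m}\sin(m\theta) + O(s^2)$ and $\partial_r\Psi = \Psi_0'(r) + (s/2) r^{-m-1}\cos(m\theta) + O(s^2)$. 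Since $\Psi_0''(r_c) \ne 0$, the implicit function theorem yields exactly $2m$ critical points of $\Psi$ in a neighbourhood of $\{|z|=r_c\}$, at $\theta_k = k\pi/m$ and $r_k = r_c + O(s)$ for $k = 0, \dots, 2m-1$. At each such point $\partial^2_{r\theta}\Psi = O(s^2)$, $\partial^2_{rr}\Psi = -2\Omega + O(s)$, and $\partial^2_{\theta\theta}\Psi = (-1)^k s m r_c^{-m}/2 + O(s^2)$, so $\det \mathrm{Hess}\, \Psi$ has leading sign $-(-1)^k$: even-$k$ points are hyperbolic saddles, odd-$k$ ones linear centers. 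Away from a fixed neighbourhood of $\{|z|=r_c\}$, $|\Psi_0'|$ is bounded below on $[1,\infty)$, precluding further critical points for $s$ small.

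The main obstacle is producing the heteroclinic orbits, which are nongeneric in general but here are forced by symmetry. By rotational symmetry all $m$ saddles share a common value $\Psi^*$, so the critical level set $\{\Psi = \Psi^*\} \cap (\C \without D)$ is compact (as $\Psi \to -\infty$ at infinity), invariant under the full symmetry group of $\cm$, and a smooth $1$-manifold away from its $m$ Morse-nondegenerate singularities. A Taylor expansion of $\Psi_0 + s\Psi_1$ around $r = r_c$ identifies this level set at leading order with the figure-chain
\begin{equation*}
  \Omega(r-r_c)^2 = \frac{s}{m}\, r_c^{-m} \sin^2(m\theta/2),
\end{equation*}
consisting of $m$ Cat's eye lobes, each bounded by a pair of arcs joining two adjacent saddles and enclosing a center. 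By the Morse lemma each saddle locally splits the critical level set into two transversal smooth arcs. As level sets of $\Psi$ these arcs extend globally and can terminate only at another saddle, so by continuity in $s$ and the reflection symmetry of $\cm$ across the axes $\theta = (2j+1)\pi/m$ bisecting each lobe, the $2m$ arcs pair up adjacent saddles and form the heteroclinic pairs. Each lobe is then filled by closed level sets of $\Psi$ around its center (periodic orbits). In the complement of the closed Cat's eye region, the set $\{\partial_r\Psi = 0\}$ is absent for small $s$ (being contained in the lobes), so $\partial_r\Psi$ has constant sign on each connected component of each level set, which the implicit function theorem then realises as a polar graph $r = \tilde r(\theta)$.
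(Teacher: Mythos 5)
Your route is genuinely different from the paper's. The paper first proves, by maximum-principle and Hopf-lemma arguments combined with the boundary expansions of Lemma~\ref{lem:Psir}, the \emph{global} inequalities \eqref{eqn:n:glob} and \eqref{eqn:n:loc} — in particular $\Psi_\theta>0$ on all of $S$, $(r\dell_r)^2\Psi^-<0$ on all of $\C\without D$, and $\Psi^-_{r\theta}<0$ on $S\without D$ (Proposition~\ref{prop:Psisign}) — and then reads off the entire phase portrait from a nullcline analysis: the curve $\{\Psi_r=0\}$ is a single decreasing graph $r=r_c(\theta)$ in each sector, the only equilibria lie on the symmetry rays, and the signs of $\dot r,\dot\theta$ in the resulting regions force the heteroclinic and polar-graph structure. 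You instead perturb off the degenerate circle of equilibria at $s=0$. Your local computations ($\Psi_1=-\tfrac1{2m}r^{-m}\cos m\theta$, the $2m$ critical points near $r_c=\sqrt{m/(m-1)}$, the Hessian signs, the figure-chain $\Omega(r-r_c)^2=\tfrac sm r_c^{-m}\sin^2(m\theta/2)$) are all correct and consistent with \eqref{eqn:Psirexp}; with a uniform $C^1$ control of the $O(s^2)$ remainders on a fixed annulus (which does hold, by analyticity in $s$ away from $\dell D$) the count and classification of equilibria goes through.

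The genuine gaps are in the global topological steps. First, the heteroclinic pairing: "by continuity in $s$ and the reflection symmetry\dots the $2m$ arcs pair up adjacent saddles" is not an argument. Reflection symmetry only identifies a separatrix with an orbit emanating from the adjacent saddle \emph{if} the separatrix actually reaches the bisecting axis $\theta=\pi/m$ at a non-critical point; you must rule out that it instead returns as a homoclinic loop, or crosses $\theta=0$ and closes up around a larger region. This requires either a rescaled ($r=r_c+\sqrt s\,\rho$) convergence of the critical level set to the figure-chain with control near the pinch points, or an index/confinement argument (the level set lies in $\{|r-r_c|\le C\sqrt s\}$, a homoclinic loop must enclose a center, the centers are excluded from the relevant sector, etc.), or simply the inequality $\Psi_\theta>0$ on $S$ — which is exactly what the paper's maximum-principle step supplies and what your sketch omits. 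Second, the claim that $\{\dell_r\Psi=0\}$ is "contained in the lobes" is asserted, not proved, and it is the crux of the polar-graph conclusion: $\{\Psi_r=0\}$ is a closed curve encircling the origin through all the saddles and centers, deviating from $r_c$ by $O(s)$, while the lobes pinch to zero width at the saddles; showing the arc between consecutive saddles stays inside the closed lobe needs the $O(s)$-versus-$O(\sqrt s)$ comparison away from the saddles \emph{plus} a local analysis at each saddle (the tangent of $\{\Psi_r=0\}$ there lies between the two separatrix directions). In the paper both issues evaporate: $\{\Psi_r=0\}\cap(\overline S\without D)$ is the single graph $r=r_c(\theta)$ by \eqref{eqn:n:rr}, and \eqref{eqn:n:t} confines all equilibria to $\dell S$, after which the nullclines leave no freedom. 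So while your perturbative scheme can be completed, as written it leaves precisely the global assertions of the theorem unproved.
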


Based on our above results and the numerical evidence in
Section~\ref{sec:numerical}, we make the following two conjectures:
\begin{conjecture}[Limiting solutions]\label{conj:limiting}
  The singular solutions with \ang{90} corners seen in
  numerics~\cite{woz:numerical,overman:limiting} exist as the (weak)
  limits of patches along $\cm$ as $s \to \infty$.
\end{conjecture}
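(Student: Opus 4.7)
The plan is to proceed in three stages: extract a subsequence $s_n \to \infty$ along which $D(s_n)$ converges, identify the limit as a weak rotating patch solution, and show that $\ang{90}$ corners form at the points where $\partial_r \Psi$ vanishes in the limit.

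For the compactness step, $m$-fold symmetry together with the monotonicity \ref{thm:informal:nodal} of Theorem~\ref{thm:informal} expresses each patch boundary as a non-increasing polar graph $R_s \maps [0, \pi/m] \to (0, \infty)$. Given a uniform bound on area or diameter --- obtainable, for instance, by testing \eqref{eqn:Psi:lap} against $\abs{z}^2$ and invoking \eqref{eqn:Psi:asym} --- Helly's selection theorem furnishes $s_n \to \infty$ with $R_{s_n} \to R_*$ pointwise; passing to further subsequences arranges $\Omega(s_n) \to \Omega_*$ and $1_{D(s_n)} \to 1_{D_*}$ in $L^p_\loc$. Standard elliptic regularity then yields $\Psi_n \to \Psi_*$ locally uniformly, with $\Delta \Psi_* = 1_{D_*} - 2\Omega_*$, $\Psi_* = 0$ on $\partial D_*$, and the correct decay at infinity.

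For the corner analysis, choose $z_n \in \partial D(s_n)$ with $\partial_r \Psi_n(z_n) \to 0$; a subsequential limit $z_* \in \partial D_*$ then satisfies $\grad \Psi_*(z_*) = 0$, since $\Psi_* = 0$ on $\partial D_*$ also kills the tangential component. Writing $\Psi_*$ locally as a quadratic particular solution of $\Delta \Psi = 1 - 2\Omega_*$ inside $D_*$ (resp.\ $-2\Omega_*$ outside) plus a harmonic correction, the leading-order behavior of $\Psi_*$ near $z_*$ is quadratic and its zero set is asymptotic to two lines meeting at $z_*$. The mismatch of Laplacians across $\partial D_*$ then forces those lines to meet at exactly $\pi/2$, as in the formal analyses of \cite{overman:limiting,woz:numerical}.

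The main obstacle --- and the reason we state this as a conjecture rather than a theorem --- is quantitative non-degeneracy: proving that $\Psi_*$ vanishes to order exactly $2$ at $z_*$ (as opposed to higher order), and that the two branches of $\partial D_*$ meeting at $z_*$ are genuinely distinct rather than tangent. Neither follows from the qualitative output of our global-bifurcation machinery; establishing them appears to require Alt--Caffarelli-type free boundary estimates uniform in $s$, or a monotonicity formula adapted to this problem, together with uniform lower bounds on the area of $D(s_n)$ and on the spacing of adjacent corner candidates, to rule out collapse, splitting, or corner collision in the limit.
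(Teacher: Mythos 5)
This statement is Conjecture~\ref{conj:limiting}: the paper does not prove it, and explicitly remarks that a proof ``would seem to require, among a great many other things, a positive resolution of Conjecture~\ref{conj:nodal}.'' So there is no proof in the paper to compare against, and what you have written is a strategy outline with an honestly acknowledged gap rather than a proof. The skeleton is reasonable --- Helly compactness for the monotone polar graphs $R_s$ supplied by Theorem~\ref{thm:informal}\ref{thm:informal:nodal}, passage to the limit in \eqref{eqn:Psi}, and a local analysis at a point where $\grad\Psi_*$ vanishes --- and you correctly identify quantitative non-degeneracy at the corner as the crux.

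Two concrete problems beyond the gap you flag. First, your corner step overclaims: the argument ``quadratic particular solution plus harmonic correction, so the zero set is two lines meeting at $\pi/2$'' presupposes that $\Psi_*$ vanishes to order exactly two \emph{and} silently discards the other admissible local profile. Overman's analysis \cite{overman:limiting}, as cited in the paper, shows only that the corner is either a \ang{90} corner or a cusp of interior angle $0$; ruling out the cusp is part of what is open, and the Laplacian mismatch alone does not do it. Second, you omit the reason the paper ties this conjecture to Conjecture~\ref{conj:nodal}: along all of $\cm$ the paper controls only the first-derivative inequalities \eqref{eqn:n:glob}, while the second-derivative information \eqref{eqn:n:loc} --- equivalently the Cat's-eyes structure and the position of the saddle point relative to $\dell D$ --- is established only for small $s$. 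Without uniform control of that structure as the saddle approaches the boundary (Figures~\ref{fig:stagmove}--\ref{fig:saddles}), one cannot set up a uniform blow-up analysis at the degenerating point. Relatedly, your compactness step is shakier than stated: the lower bound $\min_\T\abs\phi \ge 1/C$ of Lemma~\ref{lem:remaining} is conditional on \eqref{eqn:violate}, which is exactly what fails as $s\to\infty$, so collapse of the patch (or triviality of the limit) is not excluded by anything currently in the paper. These are the missing ideas, not merely technical refinements, and they are why the statement remains a conjecture.
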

\begin{conjecture}[Persistence of Cat's eyes]\label{conj:nodal}
  The conclusion of Theorem~\ref{thm:phase} holds for all $s > 0$.
\end{conjecture}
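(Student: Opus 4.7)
My plan is a global continuation argument in $s$. Define
$$
G \;=\; \bigl\{\, s \in [0,\infty) \,:\, \text{the phase portrait of } \dot z = \grad^\perp \Psi(\cdot,s) \text{ on } \overline{D(s)^c} \text{ agrees with Figure~\ref{fig:phasealone}} \,\bigr\}.
$$
Since $[0,\infty)$ is connected, it suffices to show $G$ is nonempty, open, and closed in $[0,\infty)$. Nonemptiness is Theorem~\ref{thm:phase}. Throughout I would use the continuity of $\Psi(\cdot,s)$ in $C^2_{\loc}(\overline{D(s)^c})$ along $\cm$, which follows from the global continuity statement in Theorem~\ref{thm:global}.

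\emph{Openness.} For $s \in G$ each saddle and center of the Hamiltonian field $\grad^\perp \Psi$ is hyperbolic: the centers are nondegenerate local maxima of $\Psi$, and the saddles have Hessian of indefinite signature. The $m$-fold reflection symmetry forces all $m$ saddles to lie at the same value of $\Psi$, so the heteroclinic connections are simply arcs of the level set $\{\Psi = \Psi_{\text{saddle}}\}$ joining them, and persist automatically as long as the critical points persist. Both conditions are stable under small $C^2$ perturbations of $\Psi$, yielding openness.

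\emph{Closedness.} Let $s_n \to s_\star$ with $s_n \in G$. I would first establish three a priori properties of the critical set of $\Psi(\cdot,s_\star)$ outside $D(s_\star)$:
\begin{enumerate}[label=\rm(\roman*)]
\item Since $\grad\Psi \approx -\Omega z$ as $\abs z\to\infty$, all critical points lie in a fixed large disk.
\item A critical point on $\dell D$ would satisfy $\dell_r\Psi = 0$ there, contradicting $\min_{\dell D(s_\star)}\dell_r\Psi > 0$ at the finite parameter $s_\star$, which is part~\ref{thm:informal:vanish} of Theorem~\ref{thm:informal}.
\item Continuity of $\Omega$ along $\cm$ starting from $\Omega = (m-1)/2m > 0$, together with a sign-preservation argument, gives $\Omega > 0$ throughout; hence $\Delta\Psi = -2\Omega < 0$ in $D^c$, and by the strong maximum principle $\Psi$ has no interior local minima outside $D$.
\end{enumerate}
By (i)--(iii), the critical sets at $s_n$ converge along a subsequence to a finite set of saddles and local maxima in $\overline{D(s_\star)^c}$; by symmetry and \ref{thm:informal:nodal} these lie on the axes $\theta = \pi k/m$; and the Cat's eye topology at $s_\star$ follows provided no bifurcation has occurred at $s_\star$.

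The hard part is ruling out such a bifurcation. The two dangers are \textbf{(a)} a new pair of critical points appearing off the symmetry axes, and \textbf{(b)} a saddle--node collision on an axis. For \textbf{(a)} I would use a Poincar\'e--Hopf index count on $\overline{D^c}$: the field $\grad^\perp\Psi$ is a rotation of index $+1$ at infinity and tangent to $\dell D$ with sense determined by the sign of $\dell_r\Psi > 0$; combined with the classification in (iii) this pins down the numbers of saddles and maxima to the Cat's eye values. For \textbf{(b)} I would reduce to the one-dimensional problem of tracking zeros of $r \mapsto \dell_r \Psi(r,\theta_0;s)$ for $\theta_0 \in \{0,\pi/m\}$, attempting to use the monotonicity \eqref{eqn:informal:nodal}, the superharmonicity of $\Psi$, and a reflection or moving-plane argument to show that each such radial profile has a unique simple zero for every $s > 0$. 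This transversality step is the genuine obstacle: the hypotheses of Theorem~\ref{thm:informal} constrain $\Psi$ primarily on $\dell D$, and converting that boundary information into a quantitative nondegeneracy of $\dell_r\Psi$ deep inside $D^c$ uniformly in $s$ is the core unresolved point---which is presumably the reason the result is stated here as a conjecture rather than a theorem.
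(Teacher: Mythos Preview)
This statement is a conjecture, and the paper does not prove it. What the paper offers instead is a reduction: in Section~\ref{sec:nodal:glob} it remarks that Conjecture~\ref{conj:nodal} would follow if the inequalities \eqref{eqn:n:loc} could be shown to hold along the entire global curve $\cm$ (not just along $\cm_\loc$ as in Proposition~\ref{prop:Psisign}), and in Section~\ref{sec:numerical} it gives numerical evidence that they do. The proof of Theorem~\ref{thm:phase} shows precisely how \eqref{eqn:n:loc}, together with the already-established \eqref{eqn:n:glob}, forces the Cat's-eyes picture.

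Your approach is genuinely different: rather than propagating the specific analytic inequalities \eqref{eqn:n:loc}, you attempt a direct open-and-closed argument on the qualitative phase portrait. This is a reasonable strategy, and your identification of the crux --- ruling out a saddle--node bifurcation on the symmetry axes, equivalently showing that $r\mapsto\Psi_r(r,\theta_0;s)$ has a unique simple zero for $\theta_0\in\{0,\pi/m\}$ --- is accurate and is essentially equivalent to establishing \eqref{eqn:n:rr} globally. A few corrections: your item (ii) follows from \eqref{eqn:n:r} via Proposition~\ref{prop:noloop}, not from Theorem~\ref{thm:informal}\ref{thm:informal:vanish}, which only concerns the limit $s\to\infty$; your item (iii) on $\Omega>0$ is Lemma~\ref{lem:omega} and relies on the nonexistence results of Hmidi and Fraenkel, not on mere sign preservation; and the Poincar\'e--Hopf count in your step (a) would require care on the noncompact domain $\C\setminus D$ with boundary. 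In any case, you correctly recognise that the closedness step --- converting boundary information on $\dell D$ into uniform nondegeneracy of critical points deep in $\C\setminus D$ --- is the genuine obstacle, which is exactly why the paper leaves this as a conjecture supported only by numerics.
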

A proof of Conjecture~\ref{conj:limiting} would seem to require, among
a great many other things, a positive resolution of
Conjecture~\ref{conj:nodal}. This is similar to the current state of
the art for steady water waves with constant vorticity;
see~\cite{csv:critical} and the discussion in the next subsection.

\subsection{Historical discussion}
In 1880, Thomson (Lord Kelvin) derived and analyzed the linear
equations for small irrotational disturbances of a three-dimensional
cylindrical vortex~\cite{thomson:columnar}. For purely two-dimensional
disturbances, one finds that a vortex patch with boundary $r =
1+\varepsilon \cos m\theta$ will rotate at constant angular velocity
$\Omega_m = (m-1)/2m$~\cite[Art.~158]{lamb32}. Kirchhoff later
discovered explicit two-dimensional solutions to the full nonlinear
problem in the form of rotating ellipses~\cite[Art.~159]{lamb32}. As
the eccentricity vanishes, the angular velocity of these ellipses
approaches Kelvin's $\Omega_2$.

The first rigorous existence proof for nonlinear rotating vortex
patches for $m \ge 3$ is due to Burbea in 1982~\cite{burbea:motions}.
Reformulating the problem in terms of a conformal mapping, he used the
celebrated Crandall--Rabinowitz theorem~\cite{cr:simple} on
bifurcation from a simple eigenvalue and for each $m$ obtained a small
curve of solutions close to the unit disk. In 2013, Hmidi, Mateu and
Verdera~\cite{hmv:reg} again used Crandall--Rabinowitz methods to
construct local curves of solutions, this time showing that the
boundaries $\dell D$ are smooth. This regularity result was further
improved by Castro, C\'ordoba, and G\'omez-Serrano~\cite{ccg:reg} who
in 2016 constructed a local curve of solutions with $\dell D$
analytic.

In recent years there has been a burst of rigorous mathematical work
on rotating vortex patches and related problems. In addition to the
results mentioned above, there are existence proofs for rotating
patches close to Kirchhoff's ellipses~\cite{ccg:reg,hm:ellipse}, pairs
of vortex patches~\cite{hm:pairs}, multiply-connected
patches~\cite{hhmv:doubly}, and patches in bounded
domains~\cite{hhhm:disc}. Many of these results apply not only to the
Euler equations but also to the inviscid Surface Quasi-Geostrophic
equations or the generalized Surface Quasi-Geostrophic equations; in
this context also see \cite{ccgz:geometric,ccg:sqg}.

It is important to emphasize that all of the above analytical results
treat patches which are sufficiently close either to the unit disk or
to some other explicit solution. Numerically, however, solutions have
been found far beyond these perturbative regimes. In 1978, Deem and
Zabusky~\cite{dz:vstates} found branches of rotating patches (which
they called ``V-states'') with different symmetry classes $m$
bifurcating from the unit disk. Wu, Overman, and
Zabusky~\cite{woz:numerical} went further along the same branches in
1984 and found singular limiting solutions with \ang{90} corners; see
Figure~\ref{fig:large} on page~\pageref{fig:large}.
Overman~\cite{overman:limiting} then performed a careful asymptotic
analysis near the corner of a hypothetical vortex patch (satisfying
several assumptions), and confirmed analytically that either the
corner is a cusp with an interior angle of 0, or that the interior
angle is \ang{90} as seen in the numerics. Patches bifurcating from
Kirchhoff ellipses rather than the unit disk were first computed by
Kamm in his thesis~\cite{kamm:thesis}. The papers
\cite{hhmv:doubly,hhhm:disc} mentioned in the previous paragraph also
contain numerical results on doubly-connected vortex patches and
on patches in a bounded domain.

To our knowledge, Theorem~\ref{thm:informal} is the first
existence proof for rotating vortex patches which is global in the
sense that it is not limited to a small neighborhood of an explicit
solution. 
Our methods are inspired by global results for steady water waves, and
in particular the real-analytic bifurcation techniques in
\cite{bt:analytic}. For steady water waves, there is an analogue of
Conjecture~\ref{conj:limiting} known as the ``Stokes conjecture''. In
the absence of vorticity, it was famously proven in a serious of
papers culminating in \cite{aft}. When Cat's eyes are permitted in the
flow, however, the existence and nature of limiting solutions remains
an important open problem; see \cite{csv:critical}.

Before continuing to the outline, we lastly compare our results to the
variational work of Turkington~\cite{turkington:steady,
turkington:nfold} in the 1980s. In \cite{turkington:steady},
Turkington considered \emph{steady}, \emph{non-rotating}  vortex
patches in a bounded domain, and in particular the singular limit as
the patches become point vortices. This non-rotating problem is
fundamentally different from ours: the flow no longer has Cat's eyes
and the patch is simply expressed as $D= \{\Psi > 0\}$. That being
said, Turkington's result is indeed global in the sense that it
constructs patches with any prescribed area (less than the area of the
bounded domain). The regularity of the solutions, outside of the
scaling limit mentioned above, is left open, and so it is possible
that some of these patches have singular boundaries. In
\cite{turkington:nfold}, Turkington considered an unbounded fluid
domain with $N$ symmetrically arranged vortex patches rotating about
the origin. Restricting attention to a fixed region about each patch
$D$, he first solved a modified variational problem for which again
$D=\{ \Psi > 0\}$, but was only able to guarantee that this yields a
solution to the full problem in the limit as the patches approach
point vortices.

\subsection{Outline of the proof}

As in \cite{burbea:motions}, we reformulate \eqref{eqn:Psi} in terms
of a conformal mapping $\Phi$. Let $\D \sub \C$ denote the unit disk
with boundary $\T := \dell \D$, and assume that the vortex patch $D
\sub \C$ is a bounded and simply-connected $C^{k+\alpha}$ domain for
some integer $k \ge 1$ and $\alpha \in (0,1)$. A conformal mapping
$\Phi$ from $\C \without \overline\D$ to $\C \without \overline D$
will then extend to a $C^{k+\alpha}$ mapping $\C \without \D \to \C
\without D$, and $\phi := \Phi|_\T$ will give a $C^{k+\alpha}$
parametrization of $\dell D$ \cite[Theorem~3.6]{pommerenke:book}. 

With $D$ fixed, the unique solution $\Psi$ of
\eqref{eqn:Psi:lap}--\eqref{eqn:Psi:reg} can be written explicitly in
terms of the Newtonian potential of $1_D$. Differentiation and an
application of Green's theorem then yields
\begin{align}
  \label{eqn:gradPsi}
  -\bar{\grad \Psi(\Phi(w))} 
  =
  \Omega\bar{ \Phi(w) }
  + \frac 1{4\pi i} \int_\T 
  \frac{\bar{\Phi(\tau)}-\bar{\Phi(w)}} { \Phi(\tau) -
  \Phi(w)} \Phi'( \tau)\, d\tau,
\end{align}
where here $\grad\Psi = \Psi_x + i\Psi_y$. Differentiating the
remaining equation \eqref{eqn:Psi:kin} along the boundary and plugging
in \eqref{eqn:gradPsi}, one sees that \eqref{eqn:Psi} is equivalent
to the integral equation
\begin{align}
  \label{eqn:phi}
  \Im \left\{
    \left(
      \Omega\bar{ \phi(w) }
      + \frac 1{4\pi i} \int_\T 
      \frac{\bar{\phi(\tau)}-\bar{\phi(w)}} { \phi(\tau) -
      \phi(w)} \phi'( \tau)\, d\tau 
    \right)
    w \phi'(w)
  \right\} = 0
\end{align}
for the restriction $\phi = \Phi|_\T$. See Lemma~\ref{lem:stream}
below for more details of the equivalence, and also for instance \cite{hmv:reg}.

Two famous objects from complex analysis appear in \eqref{eqn:phi}.
The first is the Cauchy integral operator
\begin{align}
  \label{eqn:cauchy}
  \Ca(\phi) \maps
  g \mapsto
  \frac 1{2\pi i}\int_\T 
  \frac{g(\tau)-g(w)} { \phi(\tau) -
  \phi(w)} \phi'( \tau)\, d\tau
\end{align}
associated to the curve $\dell D = \phi(\T)$. By a result of Lanza
de Cristoforis and Preciso~\cite{ldcl:cauchy}, this bounded operator
$C^{k+\alpha}(\T) \to C^{k+\alpha}(\T)$ depends real-analytically on
$\phi$ in an open subset of $C^{k+\alpha}(\T)$; see
Theorem~\ref{thm:cauchy} below. This analyticity seems not to have
been previously taken advantage of in the mathematical literature on
rotating vortex patches. In addition to enabling us to skip tedious
verifications of the regularity of the dependence of various
expressions on $\phi$, it enables us to use a powerful global
bifurcation theory specialized to analytic operators
\cite{dancer:global,bt:analytic}. 

Secondly, the equation \eqref{eqn:phi} itself can be thought of as a
of ``quasilinear Riemann--Hilbert problem''. This is a useful approach
to \eqref{eqn:phi} which appears to be new. Setting 
\begin{align*}
  A := 
  (
  \Omega\bar{ \phi }
  + \tfrac 12 \Ca(\phi) \bar \phi
  ) w,
\end{align*}
\eqref{eqn:phi} takes the form
\begin{align}
  \label{eqn:rh}
  \Im\{A \phi'\} = 0.
\end{align}
Thanks to the mapping properties of $\Ca$ mentioned above, the
``coefficient'' $A$ has the same regularity as $\phi$, while the
derivative $\phi'$ appears linearly. Using standard formulas from the
theory of Riemann--Hilbert problems, we can explicitly invert
\eqref{eqn:rh} to find $\phi'$ in terms of $A$. This is helpful
because of the good regularity properties of $A$ and also its close
connection to the fluid velocity via \eqref{eqn:gradPsi}. 

We remark that, by \eqref{eqn:gradPsi}, $A \in C^{k+\alpha}$ implies
that the composition $\grad\Psi \circ \phi$ is also $C^{k+\alpha}$.
This is an improvement over naive Schauder estimates based on the
elliptic equation \eqref{eqn:Psi}, which only give $\grad\Psi \in
C^{k-1+\alpha}$. There are of course other ways to obtain similar gains in
regularity without recourse to conformal mappings; see for instance
the ``regularizing diffeomorphisms'' in
\cite[Section~2.2.2]{lannes:book}. 

The outline of the paper is as follows. In Section~\ref{sec:prelim},
we collect several preliminary results and reformulate \eqref{eqn:phi}
as a nonlinear operator equation $\F(\phi-w,\Omega) = 0$. Here $\F$ is
an analytic operator defined on an open subset $U$ of a Banach space
$X$ which encodes the $m$-fold symmetry. In
Section~\ref{sec:localbif}, we study the set of solutions to this
equation when $\n{\phi - w}_{C^{3+\alpha}}$ is small. It consists of
the ``trivial'' line of solutions with $\phi(w) \equiv w$ together
with a sequence of analytic curves (i.e.~curves with analytic
parametrizations) bifurcating from this axis at discrete frequencies
$\Omega_{nm}$. The existence of these curves is well known, but the
analyticity appears to be new. In Section~\ref{sec:global}, we
construct a global curve $\cm$ of solutions using the analytic global
bifurcation theory of Dancer \cite{dancer:global} and Buffoni--Toland
\cite{bt:analytic}. This curve either \ref{thm:global:loop}
is a closed loop or \ref{thm:global:blowup} ``blows up'' in a certain
sense as the parameter $s\to \infty$. In Section~\ref{sec:nodal}, we
show that alternative \ref{thm:global:loop} cannot happen by tracking
certain ``nodal properties'' related to \eqref{eqn:informal:nodal}
using maximum principle and continuation arguments. We also prove
Theorem~\ref{thm:phase} on the streamlines of solutions with small
$s$. In Section~\ref{sec:n:reg}, we show that $\dell D$ is analytic
for every solution in $\cm$ by using a result of Kinderlehrer,
Nirenberg, and Spruck~\cite{kns:freereg}. In
Section~\ref{sec:uniform}, we turn to the second alternative
\ref{thm:global:blowup} and complete the proof of
Theorem~\ref{thm:informal}. Finally, in Section~\ref{sec:numerical},
we provide numerical evidence for Conjectures~\ref{conj:limiting} and
\ref{conj:nodal}. Appendix~\ref{sec:rh} contains some needed facts
about linear Riemann--Hilbert problems, and Appendix~\ref{sec:stream}
contains several identities relating derivatives of the stream
function $\Psi$ to derivatives of $\phi$.

\section{Formulation and preliminaries}\label{sec:prelim}

In the rest of the paper we fix the integer $m \ge 2$ describing the
symmetry class of the solutions under consideration.

\subsection{Some useful formulas}

The following lemma allows us to switch between the two formulations
\eqref{eqn:phi} and \eqref{eqn:Psi} of the rotating vortex patch
problem. While \eqref{eqn:phi} is more convenient
functional-analytically, the elliptic equation \eqref{eqn:Psi} will
often be more useful for establishing qualitative results. To simplify
the formulas, we make use of the Wirtinger derivatives
\begin{align*}
  \dell_z := \tfrac 12 ( \dell_x - i\dell_y),
  \qquad 
  \dell_{\bar z} := \tfrac 12 ( \dell_x + i\dell_y).
\end{align*}
\begin{lemma}[Partials of the stream function]\label{lem:stream}
  Suppose $D \sub \C$ is a bounded and simply-connected $C^{k+\beta}$
  domain for some $k \ge 3$, and let $\Psi \in C^{k+\beta}(\overline
  D) \cap C^{k+\beta}(\C \without D) \cap C^1(\C)$ be the unique (up
  to an additive constant) solution to
  \eqref{eqn:Psi:lap}--\eqref{eqn:Psi:reg}. Let $\Phi \maps \C
  \without \D \to \C \without D$ be a conformal map, with
  $C^{k+\beta}$ restriction $\phi := \Phi|_\T$.
  \begin{enumerate}[label=\rm(\alph*)]
  \item On $\dell D$, the partials of $\Psi^- := \Psi|_{\C \without D}$ are given by
    \begin{align} \label{eqn:Psizw}
      \begin{aligned}
        (\dell_z \Psi) \circ \phi 
        &= \tfrac 14 \Ca(\phi)\bar\phi - \tfrac \Omega 2 \bar{\phi},\\
        (\dell_z^2 \Psi^-) \circ \phi 
        &= \tfrac 14 \Ca(\phi) F_2(\phi) ,\\
        (\dell_z^3\Psi^-) \circ \phi
        &= \tfrac 14\Ca(\phi) F_3(\phi),
      \end{aligned}
    \end{align}
    where
    \begin{align*}
      F_2(\phi) := \frac{\bar{\phi'}}{w^2\phi'},
      \qquad 
      F_3(\phi) := -
      \frac{2\bar{\phi'}}{w^3(\phi')^2}
      - \frac{\bar{\phi''}}{w^4(\phi')^2}
      - \frac{\bar{\phi'} \phi''}{w^2 (\phi')^3}.
    \end{align*}
  \item In particular, (a constant shift of) $\Psi$ solves
    \eqref{eqn:Psi:kin} if and only if $\phi$ satisfies
    \eqref{eqn:phi}.

  \end{enumerate}
  \begin{proof}
    The representation of $\dell_z\Psi$ and the equivalence are
    implicit in earlier work on rotating vortex patches,
    e.g.~\cite{burbea:motions,hmv:reg}. The extension to higher partials
    involves an integration-by-parts argument to eliminate
    singular terms in the integral (e.g.~\cite[\S4.4]{gakhov}). For
    the reader's convenience the details are provided in
    Appendix~\ref{sec:stream}.
  \end{proof}
\end{lemma}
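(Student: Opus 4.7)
The plan is to start from the explicit representation
\[
\Psi(z) = \frac{1}{2\pi}\int_D \log|z-\zeta|\,dA(\zeta) - \tfrac{\Omega}{2}|z|^2 + \text{const},
\]
which is the unique solution (up to an additive constant) of \eqref{eqn:Psi:lap}--\eqref{eqn:Psi:reg}: the first term is the Newtonian potential of $1_D$ so $\Delta = 1_D$, while the quadratic absorbs the $-2\Omega$ and is consistent with \eqref{eqn:Psi:asym}. The $C^{k+\beta}$ regularity of $D$ gives the claimed regularity of $\Psi^\pm$.

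Applying $\dell_z$ and using $\dell_z\log|z-\zeta| = \tfrac{1}{2}(z-\zeta)^{-1}$ gives $\dell_z\Psi(z) = (4\pi)^{-1}\int_D (z-\zeta)^{-1}\,dA(\zeta) - \tfrac{\Omega\bar z}{2}$. For $z$ outside $\bar D$, the identity $\dell_{\bar\zeta}\bigl((\bar\zeta-\bar z)/(\zeta-z)\bigr) = (\zeta-z)^{-1}$ combined with the complex Stokes formula $\int_D \dell_{\bar\zeta} f\,dA = (2i)^{-1}\int_{\dell D} f\, d\zeta$ converts the area integral to a boundary integral, which extends to $z\in\dell D$ by continuity of $\dell_z\Psi$. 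Parametrizing $\zeta = \phi(\tau)$ for $\tau\in\T$ then yields the first line of \eqref{eqn:Psizw}, and using $\grad\Psi = 2\overline{\dell_z\Psi}$ (valid since $\Psi$ is real) simultaneously recovers \eqref{eqn:gradPsi}.

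For the higher partials $\dell_z^k \Psi^-$ with $k=2,3$, naively differentiating the area integral produces kernels $(z-\zeta)^{-k}$ which are no longer locally integrable. The plan is instead to differentiate the boundary representation and then integrate by parts along $\dell D$, via the companion Stokes formula $\int_D \dell_\zeta f\,dA = -(2i)^{-1}\int_{\dell D} f\, d\bar\zeta$ together with the identity $d\bar\zeta = -\bar{\phi'(\tau)}\tau^{-2}\,d\tau$ on $\T$ (which follows from $\bar\tau = 1/\tau$). Each such step converts area derivatives into boundary factors involving $\bar{\phi'}/\tau^2$ and, after further differentiation, the $\phi''$-type terms appearing in $F_3(\phi)$. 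What is left is a Cauchy-type principal-value integral
\[
\frac{1}{2\pi i}\,\mathrm{p.v.}\!\int_\T \frac{h(\tau)\,d\tau}{\phi(\tau)-\phi(w)},
\]
which by Sokhotski--Plemelj differs from the regular operator $\tfrac{1}{2}\Ca(\phi)(h/\phi')$ by an explicit half-jump term $\pm\tfrac{1}{2} h(w)/\phi'(w)$ corresponding to the limit from $\C\without \bar D$. The main technical obstacle should be arranging these integrations by parts and bookkeeping the half-jumps so that all singular contributions cancel, leaving a single application of $\Ca(\phi)$ to precisely the rational densities $F_2(\phi)$ and $F_3(\phi)$ stated in the lemma.

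For part (b), set $u(w) := \Psi(\phi(w))$, which is real and vanishes on $\T$ if and only if \eqref{eqn:Psi:kin} holds. Since $\Phi$ is holomorphic in $\C\without\bar\D$, one has $\dell_w u = (\dell_z\Psi)\circ\phi\cdot\phi'(w)$ and, since $u$ is real, $\dell_{\bar w} u = \overline{\dell_w u}$. Parametrizing $\T$ by $w = e^{i\theta}$ and using $\bar w = 1/w$, the tangential derivative satisfies $\dell_\theta u = iw\dell_w u - i\bar w\overline{\dell_w u}$, whose vanishing is equivalent to $w\dell_w u = \overline{w\dell_w u}$, i.e.\ $\Im\{(\dell_z\Psi\circ\phi)\,w\phi'(w)\} = 0$. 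Substituting the first line of \eqref{eqn:Psizw} gives exactly \eqref{eqn:phi} (up to a harmless overall real factor inside the imaginary part), yielding the equivalence.
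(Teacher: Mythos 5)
Your proposal follows essentially the same route as the paper's Appendix~B: represent $\Psi$ via the Newtonian potential of $1_D$ plus $-\tfrac\Omega2|z|^2$, use the complex Green/Stokes formula to convert the area integral for $\dell_z\Psi$ into a contour integral over $\dell D$, handle the non-integrable kernels $(z-\zeta)^{-k}$ for $k=2,3$ by tangential integration by parts using $d\bar\zeta=-\bar{\phi'(\tau)}\tau^{-2}\,d\tau$ on $\T$, and obtain part (b) by differentiating $\Psi\circ\phi$ tangentially. The one genuine divergence is how the boundary limit is taken: you propose Sokhotski--Plemelj with half-jump bookkeeping, whereas the paper avoids principal values altogether by subtracting the density's value at the target point inside the integrand --- legitimate because $\int_{\dell D}(z-\zeta)^{-1}\,d\zeta=0$ for $z$ exterior to $\bar D$ --- after which the integrand is regular and the exterior limit is literally $\Ca(\phi)$ applied to the resulting density. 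Your route does work (the exterior Plemelj limit of a Cauchy integral with H\"older density $g$ is exactly the regularized integral defining $\Ca(\phi)g$, so the half-jumps cancel as you anticipate), but the paper's subtraction device dissolves the ``main technical obstacle'' you flag, and is worth adopting. The explicit computations producing $F_2$ and $F_3$ are left undone in your write-up; they follow mechanically from your scheme, though you should track orientation and sign conventions carefully when finalizing, since these are exactly where errors creep in.
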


It will also often be useful to express $\phi$ in polar coordinates as
$\phi = \rho e^{i\vartheta}$. For ease of reference, we record the
formulas for $\rho'$ and $\vartheta'$ in the following lemma.
\begin{lemma}[$\phi$ in polar coordinates]\label{lem:polar}
  For any $\phi\in C^{k+\beta}(\T,\C \without \{0\})$ we can write
  \begin{align}
    \label{eqn:polar}
    \phi(e^{it}) = \rho(t) e^{i\vartheta(t)}
    \fora t \in \R,
  \end{align}
  where $\rho > 0$ and $\vartheta$ are periodic real-valued
  $C^{k+\beta}$ functions. Their first derivatives are
  \begin{align}
    \label{eqn:polarder}
    \vartheta'(t) = 
    \Re \left( \frac{e^{it}\phi'(e^{it})}{\phi(e^{it})} \right),
    \qquad 
    \rho'(t) = -\rho(t) \Im 
    \left( \frac{e^{it}\phi'(e^{it})}{\phi(e^{it})} \right).
  \end{align}
  \begin{proof}
    The existence of $\rho,\vartheta$ is standard.
    Differentiating \eqref{eqn:polar} with respect to $t$ and dividing
    by $\phi$ yields
    \begin{align*}
      \frac{ie^{it}\phi'(e^{it})}{\phi(e^{it})} = \frac{\rho'(t)}{\rho(t)}
      + i\vartheta'(t),
    \end{align*}
    which has \eqref{eqn:polarder} as real and imaginary parts.
  \end{proof}
\end{lemma}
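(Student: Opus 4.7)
The plan is to construct $\rho$ and $\vartheta$ separately and then obtain \eqref{eqn:polarder} by logarithmic differentiation of \eqref{eqn:polar}. Since essentially nothing deep happens, I expect the entire argument to be short, with the only real content being the regularity of the angular lift.

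First, I would set $\rho(t) := \abs{\phi(e^{it})}$. The modulus map $z \mapsto \abs{z}$ is real-analytic on $\C \without \{0\}$, and by hypothesis $\phi$ takes values in this set, so the chain rule immediately gives $\rho \in C^{k+\beta}(\R)$ with $\rho > 0$. The periodicity of $\rho$ is inherited from the periodicity of $t \mapsto \phi(e^{it})$.

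Next, I would construct $\vartheta$ by lifting. The map $\gamma(t) := \phi(e^{it})/\rho(t)$ is a $C^{k+\beta}$ map $\R \to \T$. By standard covering-space theory for the universal cover $\R \xrightarrow{e^{i\cdot}} \T$, there exists a continuous lift $\vartheta\maps \R \to \R$ with $e^{i\vartheta(t)} = \gamma(t)$, unique up to an additive integer multiple of $2\pi$. The regularity $\vartheta \in C^{k+\beta}$ follows because $e^{i\cdot}$ is a local real-analytic diffeomorphism, so locally $\vartheta$ equals a real-analytic function composed with $\gamma$. Periodicity of $e^{i\vartheta}$ is automatic; as stated in the lemma we work in the regime where $\vartheta$ itself is periodic (equivalently, $\phi$ has winding number zero about the origin, which will be the case in our applications where $\phi$ is a small perturbation of $w$ after factoring out the identity part).

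Finally, to derive \eqref{eqn:polarder}, I would differentiate $\phi(e^{it}) = \rho(t) e^{i\vartheta(t)}$ in $t$ to obtain
\begin{align*}
  ie^{it}\phi'(e^{it}) = \bigl(\rho'(t) + i\rho(t)\vartheta'(t)\bigr) e^{i\vartheta(t)},
\end{align*}
divide by $\phi(e^{it}) = \rho(t) e^{i\vartheta(t)}$, and take real and imaginary parts. The resulting identity
\begin{align*}
  \frac{ie^{it}\phi'(e^{it})}{\phi(e^{it})} = \frac{\rho'(t)}{\rho(t)} + i\vartheta'(t)
\end{align*}
gives \eqref{eqn:polarder} upon equating real and imaginary parts (noting the factor of $i$ interchanges their roles and introduces the minus sign in the formula for $\rho'$). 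There is no substantive obstacle here; the only point worth being careful about is the regularity of the lift $\vartheta$, which as noted above is a direct consequence of the smoothness of the covering map $e^{i\cdot}$.
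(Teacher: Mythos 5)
Your proposal is correct and follows essentially the same route as the paper: the paper simply declares the existence and regularity of $\rho,\vartheta$ to be standard and then performs the same logarithmic differentiation of \eqref{eqn:polar} to read off \eqref{eqn:polarder} from real and imaginary parts. The only difference is that you spell out the covering-space construction of the lift (and correctly flag that periodicity of $\vartheta$ itself presupposes winding number zero), details the paper omits.
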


\subsection{Function spaces and open sets}\label{sec:space}

Let $k \ge 1$ be an integer and $\beta \in (0,1)$. 

Observe that
\eqref{eqn:phi} has a scaling symmetry: if $\phi$ is a solution then
so is $\lambda\phi$ for any $\lambda\ne 0$. Following
\cite{burbea:motions}, we will kill this symmetry by fixing the
coefficient of the linear term in the Laurent expansion of $\phi$ to
be $1$. We then think of \eqref{eqn:phi} as an equation for the
remainder 
\begin{align*}
  f(w) := \phi(w) -w,
\end{align*}
which we require to lie in the Banach space
\begin{align*}
  X^{k+\beta} &=  \left\{ f \in C^{k+\beta}(\T) : 
  f(w) = \sum_{n=1}^\infty \frac{a_n}{w^{nm-1}},
  \ a_n \in \R \right\}.
\end{align*}
The absence of $w^p$ terms in the Fourier series of $f$ for $p \ge 1$
guarantees that $f$ extends to a holomorphic function $F$ on $\C
\without \D$ with $F'(\infty) = 0$. Note, however, that there is as
of yet no guarantee that $\Phi(w):= w+F(w)$ will be conformal.
\begin{subequations}\label{eqn:sym}
  The absence of the other terms in the series is equivalent to the
  discrete rotation symmetry 
  \begin{align}
    \label{eqn:sym:rot}
    \phi(e^{2\pi i/m} w) = e^{2\pi i/m} \phi(w),
  \end{align}
  while the fact that the coefficients are real is equivalent to
  the reflection symmetry 
  \begin{align}
    \label{eqn:sym:ref}
    \phi(\bar w) = \bar{\phi(w)}.
  \end{align}
\end{subequations}
We also define another Banach space $Y^{k-1+\beta}$, which will be the space for the
right hand side of \eqref{eqn:phi},
\begin{align*}
  Y^{k-1+\beta} &=  \left\{
    h \in C^{k-1+\beta}(\T) : h(e^{2\pi i/m}w) = h(w),\,
  h(\bar w) = -\bar {h(w)}
  \right\}.
\end{align*}

We will not work in $X^{k+\beta}$ directly but in a convenient open
subset $U^{k+\beta} = U_1^{k+\beta} \cap U_2^{k+\beta} \cap
U_3^{k+\beta}$ where
\begin{align*}
  U_1^{k+\beta} &= \left\{ (\phi-w,\Omega) \in X^{k+\beta} \by \R : \inf_{\tau \ne w}, 
  \left|\frac{\phi(\tau)-\phi(w)}{\tau - w}\right| > 0 \right\},\\
  U_2^{k+\beta} &= \left\{ (\phi-w,\Omega) \in U_1^{k+\beta} : 
  \abs[\big]{\Omega\bar{ \phi } + \tfrac 12 \Ca(\phi) \bar\phi}
    > 0 \right\},\\
  U_3^{k+\beta} &= \left\{ (\phi-w,\Omega) \in X^{k+\beta} \by \R :
  \Re\left(\frac{w\phi'}{\phi}\right) > 0 \ona \T \right\}. 
\end{align*}

\subsubsection{Open set for the Cauchy integral.}
The set $U_1^{k+\beta}$ is chosen so that the Cauchy
integral operator $\Ca(\phi)$ appearing in \eqref{eqn:phi} is well-behaved:
\begin{theorem}[Analyticity of the Cauchy integral \cite{ldcl:cauchy}]
  \label{thm:cauchy}
  \hfill
  \begin{enumerate}[label=\rm(\alph*)]
  \item The set $\tilde U_1^{k+\beta}$ defined by 
    \begin{align*}
      \tilde U_1^{k+\beta} = \left\{ \phi \in C^{k+\beta}(\T) : \inf_{\tau \ne w}
      \left|\frac{\phi(\tau)-\phi(w)}{\tau-w}\right| > 0 \right\} 
    \end{align*}
    is an open subset of $C^{k+\beta}(\T)$. Moreover $\phi \in
    C^{k+\beta}(\T)$ lies in $\tilde U_1^{k+\beta}$ if and only if $\phi$ is
    injective and $\phi' \ne 0$.
  \item The formula \eqref{eqn:cauchy} describes a real-analytic mapping
    \begin{align*}
      \Ca \maps \tilde U_1^{k+\beta} 
      \to \mathscr L(C^{k+\beta}(\T)),
    \end{align*}
    where $\mathscr L(C^{k+\beta}(\T))$ is the Banach space of
    bounded linear maps from $C^{k+\beta}(\T)$ to itself.
  \end{enumerate}
\end{theorem}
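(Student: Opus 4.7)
For part (a), the natural object to examine is the divided difference
$$Q_\phi(\tau, w) := \begin{cases} \dfrac{\phi(\tau) - \phi(w)}{\tau - w}, & \tau \neq w, \\ \phi'(w), & \tau = w, \end{cases}$$
which extends continuously to all of $\T^2$. Since $\phi \mapsto Q_\phi$ is a bounded linear map from $C^{k+\beta}(\T)$ into $C(\T^2)$ (indeed into a finer bivariate Hölder-type space), the condition $\inf_{\T^2}|Q_\phi|>0$ defines an open subset of $C^{k+\beta}(\T)$, which is exactly $\tilde U_1^{k+\beta}$. The characterization then follows at once: non-vanishing of $Q_\phi$ on the diagonal is precisely $\phi'\neq 0$, and non-vanishing off the diagonal is precisely injectivity of $\phi$. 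Conversely, if both conditions hold then $Q_\phi$ is a continuous nowhere-zero function on the compact set $\T^2$, hence automatically bounded below.

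For part (b), the starting point is to rewrite the Cauchy kernel in terms of divided differences,
$$\frac{g(\tau) - g(w)}{\phi(\tau) - \phi(w)} = \frac{Q_g(\tau, w)}{Q_\phi(\tau, w)},$$
so that
$$\Ca(\phi) g(w) = \frac{1}{2\pi i} \int_\T \frac{Q_g(\tau, w)}{Q_\phi(\tau, w)}\, \phi'(\tau)\, d\tau.$$
The plan is then to introduce a Banach algebra $Z$ of functions on $\T^2$ with the following properties: (i) $\phi \mapsto Q_\phi$ is bounded linear from $C^{k+\beta}(\T)$ to $Z$; (ii) $Z$ is a Banach algebra under pointwise multiplication, so that the inversion $Q \mapsto 1/Q$ is real-analytic on the open subset of elements bounded away from zero, via the standard Neumann series about each invertible point; and (iii) the partial integration operator $F \mapsto \bigl(w \mapsto \int_\T F(\tau, w)\, h(\tau)\, d\tau\bigr)$ is bounded bilinear from $Z \times C^{k-1+\beta}(\T)$ into $C^{k+\beta}(\T)$. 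Granted such a $Z$, the Cauchy integral $\Ca$ factors as a composition $\phi \mapsto (Q_\phi, Q_g, \phi') \mapsto (1/Q_\phi)\cdot Q_g \mapsto \int_\T(\cdot)\phi'\,d\tau$ of analytic maps, and composition of real-analytic maps is real-analytic.

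The main obstacle is property (iii): the bivariate space $Z$ must be rich enough both to capture the regularity of the divided differences of $C^{k+\beta}$ functions and to form an algebra, while the integration operator must nevertheless recover the full $C^{k+\beta}$ regularity in $w$. The nontrivial estimate is on the $k$-th derivative in $w$ of the output, which entails differentiating under the integral sign and exploiting the built-in cancellations in $Q_g$ and $Q_\phi$ to dominate the resulting near-singular integrals, essentially a Calder\'on--Zygmund-type argument adapted to $\T^2$. A suitable Hölder-type norm on $\T^2$ tailored to divided differences does the job; this is precisely what is carried out in~\cite{ldcl:cauchy}, and once $Z$ is constructed the analyticity of $\Ca$ drops out as a formal composition.
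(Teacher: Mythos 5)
The paper offers no proof of this theorem at all---it is quoted verbatim from Lanza de Cristoforis--Preciso \cite{ldcl:cauchy}---and your outline is a faithful reconstruction of the strategy of that reference: divided differences, a bivariate Schauder-type Banach algebra $Z$, analyticity of inversion via Neumann series, and boundedness of the partial integration operator back into $C^{k+\beta}(\T)$. Your part (a) is complete and correct, and your part (b) is correct as an outline, with the one genuinely hard step (constructing $Z$ and proving property (iii)) explicitly and appropriately delegated to \cite{ldcl:cauchy}, exactly as the paper itself does.
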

Thus $(\phi-w,\Omega) \in X^{k+\beta} \by \R$ lies in $U_1^{k+\beta}$
if and only if $\phi$ is injective and satisfies $\phi' \ne 0$. This
injectivity also guarantees that $\phi$ can indeed be extended to a
conformal map $\Phi$ on $\C \without
\D$~\cite[p.\,16]{pommerenke:book}. Note that while we will always
apply Theorem~\ref{thm:cauchy} to mappings $\phi$ which extend to
holomorphic functions, the theorem itself makes no such restriction. 

\subsubsection{Open set for Riemann--Hilbert problems.}
From \eqref{eqn:Psizw} we see that  $(f,\Omega) \in U_2^{k+\beta}$ if
and only if the relative fluid velocity $\grad\Psi$ does not vanish on
$\dell D$. This also guarantees that the coefficient $A$ multiplying
$\phi'$ in the Riemann--Hilbert problem \eqref{eqn:rh} is
non-vanishing, which will eventually enable us to apply
Lemma~\ref{lem:rh} below with $a=A$.

\begin{lemma}[Linear Riemann--Hilbert problems]\label{lem:rh}
  Suppose that $a \in C^{k-1+\beta}(\T,\C)$ has winding number $0$ in
  that
  \begin{align*}
    \abs a > 0 \quad \textup{ and } \quad 
    \arg a(e^{it})\Big|^{t=2\pi}_{t=0} = 0,
  \end{align*}
  and also that $a$ has the symmetry properties $a(\bar w) =
  \bar{a(w)}$ and $a(e^{2\pi i/m}w) = a(w)$. Then:
  \begin{enumerate}[label=\rm(\alph*)]
  \item \label{lem:rh:fund} 
    The problem 
    \begin{align}
      \label{eqn:rh:fund}
      \Im\{ ag' \} = 0 \ona \T,
      \qquad g-w \in X^{k+\beta}
    \end{align}
    has a unique solution $g=g_0$, whose derivative is given
    explicitly by
    \begin{align*}
      g'_0(w) = 
        \exp \left\{ 
        \frac w{2\pi} \int_\T 
        \frac{ \tau^{-1} \theta(\tau) -w^{-1} \theta(w) }{\tau -w}
        \, d\tau
        \right\},
    \end{align*}
    where here
    \begin{align*}
      \theta(w) = \arg\bigg(\frac{a(w)}{\bar{a(w)}}\bigg)
    \end{align*}
    and the branch of the $\arg$ function is fixed by requiring $\theta(1) = 0$.
  \item \label{lem:rh:inv} The operator 
    \begin{align*}
      L \maps X^{k+\beta} \to Y^{k-1+\beta},
      \qquad g \mapsto \Im\{Ag'\}
    \end{align*}
    is well-defined and invertible, with inverse operator
    characterized by
    \begin{align}
      \label{eqn:rh:inv}
      \frac d{dw} L^{-1} h(w) 
      =
        -\frac{wg_0'(w)}\pi
        \int_\T \frac 1{\tau - w}
        \left( \frac {h(\tau)}{a(\tau) g_0'(\tau) \tau}
        - \frac {h(w)}{a(w) g_0'(w) w}\right) d\tau.
    \end{align}
  \end{enumerate}
  \begin{proof}
    We postpone the proof, which relies on classical results for
    Riemann--Hilbert problems (e.g. \cite{muskhelishvili}), to
    Appendix~\ref{sec:rh}.
  \end{proof}
\end{lemma}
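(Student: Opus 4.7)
Both parts will be established by the classical theory of Riemann--Hilbert problems on the unit circle, in the vein of Muskhelishvili.

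For part (a), the plan is to turn \eqref{eqn:rh:fund} into a multiplicative factorization by Schwarz reflection. I extend $g'$ to a piecewise holomorphic function on $\C \without \T$ by setting $G(w) := g'(w)$ for $|w| > 1$ and $G(w) := \overline{g'(1/\bar w)}$ for $|w| < 1$, so that on $\T$ one has $G^- = g'$ and $G^+ = \overline{g'}$. The normalization $g-w \in X^{k+\beta}$ translates into $G^-(\infty) = G^+(0) = 1$, while the boundary condition $\Im\{ag'\} = 0$ rewrites as
\begin{align*}
  G^+(w) = \frac{a(w)}{\overline{a(w)}}\, G^-(w) = e^{i\theta(w)} G^-(w) \ona \T.
\end{align*}
Because $a$ has winding number zero, $\theta$ admits a single-valued continuous branch, which with the normalization $\theta(1) = 0$ lies in $C^{k-1+\beta}(\T,\R)$. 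Taking logarithms reduces the factorization to the additive jump problem $H^+ - H^- = i\theta$, whose unique solution vanishing at infinity is the Cauchy integral $H(w) = \tfrac{1}{2\pi}\int_\T \theta(\tau)(\tau-w)^{-1}\,d\tau$. A direct computation using the identity $\textup{p.v.}\int_\T (\tau-w)^{-1}\,d\tau = \pi i$ for $w \in \T$ rewrites the boundary value of $H$ in the desingularized form of the stated formula for $g_0'$; the subtracted $w^{-1}\theta(w)$ in the integrand serves precisely to convert the principal value into an absolutely convergent integral. The Plemelj--Privalov theorem then gives $g_0' \in C^{k-1+\beta}(\T)$, hence $g_0 \in C^{k+\beta}(\T)$, and uniqueness is by a Liouville argument: the ratio of any two solutions extends by Schwarz reflection to a bounded entire function of modulus $1$, hence is constant.

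For part (b), the plan is to exploit the fact that by construction $ag_0' \in \R$ on $\T$. Given $h \in Y^{k-1+\beta}$, writing $g' = g_0' q$ reduces $\Im\{ag'\} = h$ to the Schwarz-type problem $\Im q = h/(ag_0')$ on $\T$ for $q$ holomorphic on $\C \without \overline\D$ and vanishing at infinity. Solving this by the same Cauchy-integral construction as in (a), now applied to the datum $h/(ag_0')$ in place of $\theta$, produces the formula for $q$ and thus for $(L^{-1}h)'$ stated in \eqref{eqn:rh:inv}. Well-definedness of $L \maps X^{k+\beta} \to Y^{k-1+\beta}$ is then automatic from the Cauchy kernel mapping properties together with the imposed symmetries, while injectivity follows from the uniqueness half of (a).

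The main obstacle I anticipate is not the factorization itself, which is standard, but the symmetry bookkeeping: tracking how the rotation and reflection symmetries of $a$ and $h$ pass through the Cauchy integral to force exactly the Fourier structure $f(w) = \sum_{n\ge 1} a_n w^{1-nm}$, $a_n \in \R$, defining $X^{k+\beta}$. Concretely, this amounts to verifying by direct substitution that the integrals defining $g_0 - w$ and $L^{-1}h$ are equivariant under $w \mapsto e^{2\pi i/m}w$ and under $w \mapsto \bar w$, so that all non-admissible Fourier modes are ruled out.
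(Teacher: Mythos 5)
Your proposal is correct and rests on the same classical machinery as the paper's proof, but it is organized rather differently. The paper does not re-derive the canonical solution: it quotes Muskhelishvili's explicit formulas for the index-one problem $\Re\{a_1F_1\}=0$ on the \emph{interior} of the disk (Theorem~\ref{thm:musk}), transfers them to the exterior problem $\Im\{a_2F_2\}=0$ via the substitution $F_1(w)=\overline{F_2(1/\bar w)}$, restricts to the symmetric subspaces, and then obtains invertibility of $L$ by Fredholm bookkeeping: the operator $g\mapsto\Im\{ag'\}$ on the enlarged space $\Span\{w\}+X^{k+\beta}$ is Fredholm of index one with kernel spanned by $g_0$, and $L$ is its restriction to the codimension-one subspace $X^{k+\beta}$, which misses the kernel because $g_0-w\in X^{k+\beta}$ while $w\notin X^{k+\beta}$. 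You instead re-derive the canonical solution directly on the exterior by Schwarz reflection and multiplicative factorization, and invert the inhomogeneous problem by dividing by $g_0'$ and solving a Schwarz problem. Both are legitimate; yours is more self-contained, while the paper's makes the index count and the role of the normalization $g-w\in X^{k+\beta}$ more transparent.

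One step of your part (b) deserves to be made explicit, because as written it would fail without an extra cancellation. The Schwarz problem $\Im q=\mu$ on $\T$ for $q$ holomorphic on $\C\setminus\overline\D$ with $q(\infty)=0$ is \emph{not} unconditionally solvable: the mean-value property forces $\Im q(\infty)=\tfrac1{2\pi}\int_0^{2\pi}\mu(e^{it})\,dt$, so with $\mu=h/(ag_0')$ one needs the orthogonality condition $\int_\T h(\tau)\,\big(a(\tau)g_0'(\tau)\tau\big)^{-1}\,d\tau=0$. Equivalently, the Cauchy-integral formula underlying \eqref{eqn:rh:inv} produces a $q$ with $q(\infty)=\tfrac1\pi\int_\T\mu(\tau)\tau^{-1}\,d\tau$, and this must vanish for $\Im q=\mu$ to hold on $\T$ and for the primitive to land in $X^{k+\beta}$. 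This is exactly the paper's identity \eqref{eqn:simplifications}, and it is where the symmetries $a(\bar w)=\overline{a(w)}$ and $h(\bar w)=-\overline{h(w)}$ are genuinely needed: they make $L$ surjective, not merely equivariant. Your closing paragraph correctly flags the symmetry bookkeeping as the remaining work, but part of that bookkeeping is a solvability condition for the inhomogeneous problem rather than a check that the output has admissible Fourier modes, and it should be verified explicitly.
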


\subsubsection{Open set for graphical boundary.}
The definition of the final open set $U_3^{k+\beta}$ has a geometric
interpretation: letting $\phi(e^{it}) = \rho(t) e^{i\vartheta(t)}$ as
in Lemma~\ref{lem:polar}, we see that $(\phi-w,\Omega) \in
U_3^{k+\beta}$ is equivalent to $\vartheta' > 0$. Thus $\phi(\T)$ is a
polar graph $r=R(\theta)$ for some $C^{k+\beta}$ function $R$. The
numerical evidence~\cite{woz:numerical} suggests that this is the case
for all but the limiting solution, which is still graphical but loses
regularity. For solutions in $U_2^{k+\beta}$, we will see that
membership in $U_3^{k+\beta}$ is equivalent to the nonvanishing of the
relative angular fluid velocity $\dell_r \Psi$ on $\dell D$.

One useful feature of the set $U_3^{k+\beta}$ is that it is completely
contained in $U_1^{k+\beta}$, guaranteeing that the Cauchy
integral operator is analytic.
\begin{lemma}\label{lem:u1u3}
  $U_3^{k+\beta} \sub U_1^{k+\beta}$.
  \begin{proof}
    Let $(\phi-w,\Omega) \in U_1^{k+\beta}$. By Lemma~2.5 in
    \cite{ldcl:cauchy} it suffices to show that $\phi$ is injective
    and that $\phi' \ne 0$. That $\phi' \ne 0$ follows
    immediately from the definition of $U_3^{k+\beta}$. To see that $\phi|_\T$
    is injective, we simply compute that
    \begin{align*}
      \Im \log \frac{\phi(e^{is_1})}{\phi(e^{is_2})}
      = \Im \int_{s_2}^{s_1} \frac d{dt}
      \log \phi(e^{it})\, dt
      = \int_{s_2}^{s_1} \Re \frac{e^{it} \phi'(e^{it})}{\phi(e^{it})}\,
      dt 
      \ne 0
    \end{align*}
    whenever $s_1 \ne s_2$.
  \end{proof}
\end{lemma}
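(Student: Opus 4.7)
The plan is to invoke Theorem~\ref{thm:cauchy}(a), which reduces membership in $U_1^{k+\beta}$ to the two conditions that $\phi$ is injective on $\T$ and $\phi'$ has no zeros there. Both should fall out of the defining hypothesis $\Re(w\phi'/\phi) > 0$ on $\T$ (which, to even make sense, already implicitly requires $\phi \ne 0$ on $\T$).

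Non-vanishing of $\phi'$ is immediate: if $\phi'(w_0)=0$ at some $w_0 \in \T$, then $\Re(w_0 \phi'(w_0)/\phi(w_0)) = 0$, contradicting strict positivity.

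For injectivity, I would use Lemma~\ref{lem:polar} to write $\phi(e^{it}) = \rho(t) e^{i\vartheta(t)}$ with $\vartheta \in C^{k+\beta}$ a continuous branch of the argument. The formula \eqref{eqn:polarder} combined with the hypothesis gives $\vartheta'(t) > 0$, so $\vartheta$ is strictly increasing. The total increase $\vartheta(2\pi)-\vartheta(0) = 2\pi n$ equals $2\pi$ times the winding number $n$ of $\phi|_\T$ around $0$. To identify $n$, I would note that $\phi-w \in X^{k+\beta}$ extends to a function $F$ holomorphic on $\C\without\overline{\D}$ with $F\to 0$ at infinity, so $\Phi(w)=w+F(w)$ is meromorphic on $\C\without\overline{\D}\cup\{\infty\}$ with a single simple pole at $\infty$. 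The argument principle then yields $n = 1 - Z$, where $Z \ge 0$ counts the zeros of $\Phi$ in the exterior. Since $\vartheta'>0$ forces $n \ge 1$, we must have $Z=0$ and $n=1$. Strict monotonicity of $\vartheta$ over an interval of total rise exactly $2\pi$ then delivers injectivity.

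The step I expect to require the most care is pinning down $n=1$. A purely local argument from $\vartheta'>0$ only yields local injectivity, and using the $m$-fold symmetry \eqref{eqn:sym:rot} alone only pins down $n = 1 + mk$ for some integer $k$, leaving open the possibility of multiple covering. The argument-principle route cleanly rules this out by exploiting the global holomorphic structure of $\Phi$ together with its prescribed asymptotic at $\infty$; combined with the sign information $\vartheta'>0$, it forces $n=1$ without further input.
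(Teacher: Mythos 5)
Your proposal is correct and follows the same overall strategy as the paper: both reduce membership in $U_1^{k+\beta}$ to injectivity of $\phi$ together with $\phi'\ne 0$ (via Theorem~\ref{thm:cauchy}, i.e.\ Lemma~2.5 of \cite{ldcl:cauchy}), and both extract these two facts from the hypothesis $\Re(w\phi'/\phi)>0$. The difference lies in the injectivity step. The paper simply computes that for $s_1\ne s_2$ the continuous change of argument $\int_{s_2}^{s_1}\Re\bigl(e^{it}\phi'(e^{it})/\phi(e^{it})\bigr)\,dt$ is nonzero and stops there; as you correctly observe, positivity of this integral alone does not exclude the possibility that two boundary points have arguments differing by a nonzero multiple of $2\pi$, i.e.\ that $\phi|_\T$ is a multiple cover of its image. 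Your extra step --- applying the argument principle to the extension $\Phi$, meromorphic on the exterior with a simple pole at $\infty$, to get winding number $n = 1 - Z \le 1$, and combining this with $n \ge 1$ from $\vartheta' > 0$ to force $n = 1$ and $Z = 0$ --- closes exactly this gap, and then strict monotonicity of $\vartheta$ over a total rise of $2\pi$ gives injectivity. So your proof is, if anything, more complete than the one printed in the paper; the authors evidently treat the winding-number normalization as implicit in the definition of $X^{k+\beta}$ (compare the related computation in Lemma~\ref{lem:winding}), but pinning it down does require an argument of the kind you supply.
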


\subsection{Nonlinear operator and solution set}

Setting
\begin{align*}
  U^{k+\beta} = U_1^{k+\beta} \cap U_2^{k+\beta} \cap U_3^{k+\beta} = U_2^{k+\beta} \cap U_3^{k+\beta},
\end{align*}
we define our nonlinear operator
\begin{align*}
  \F^{k+\beta} \maps U^{k+\beta} \to Y^{k-1+\beta} 
\end{align*}
by simply rewriting the left hand side of \eqref{eqn:phi}:
\begin{align}
  \label{eqn:F}
  \F^{k+\beta}(f,\Omega) := 
  \Im \left\{
    \big(
     \Omega(\bar w + \bar f)
      + \tfrac 12 \Ca(w+f) (\bar w + \bar f)
      \big)
    w (1+f')
  \right\}.
\end{align}
\begin{lemma}\label{lem:analytic}
  The map $\F^{k+\beta} \maps U^{k+\beta} \to Y^{k-1+\beta}$ is
  well-defined and analytic. Moreover, for $(f,\Omega) \in
  U^{k+\beta}$, $B=\Ca(w+f)(\bar w + \bar f)$ enjoys the symmetry
  properties $B(\bar w) = \bar{B(w)}$ and $B(e^{2\pi i/m} w) =
  e^{-2\pi i/m} B(w)$.
  \begin{proof}
    The analyticity of $\F^{k+\beta} \maps U^{k+\beta} \to C^{k+\beta}(\T)$ is
    an immediate consequence of Theorem~\ref{thm:cauchy} and the
    inclusion $U_1^{k+\beta} \sub U^{k+\beta}$. The symmetry
    properties follow from straightforward manipulations using the
    identities $\phi(e^{2\pi i/m}w) = e^{2\pi i/m}\phi(w)$,
    $\phi'(e^{2\pi i/m} w) = \phi'(w)$, and $\phi(\bar w) =
    \bar{\phi(w)}$ for $\phi = w + f$.
  \end{proof}
\end{lemma}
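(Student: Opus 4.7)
The proof breaks into three tasks: (i) verifying that $\F^{k+\beta}(f,\Omega)$ has the regularity $C^{k-1+\beta}$ and the two symmetry conditions defining $Y^{k-1+\beta}$, (ii) showing the required symmetries of $B=\Ca(w+f)(\bar w+\bar f)$, which is really the heart of both the well-definedness claim and the secondary assertion, and (iii) checking analyticity by writing $\F^{k+\beta}$ as a composition of analytic building blocks. I would do (iii) first since it is by far the easiest, then (ii) as the main computation, and finally (i) as an almost-immediate consequence.

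\textbf{Analyticity.} I would observe that $\F^{k+\beta}$ is built from the following maps, each of which is either bounded linear (hence real-analytic) or real-analytic on an open set of a Banach space:
(a) the affine inclusion $(f,\Omega)\mapsto w+f \in C^{k+\beta}(\T)$, landing in $\tilde U_1^{k+\beta}$ whenever $(f,\Omega)\in U^{k+\beta}\subset U_1^{k+\beta}$;
(b) complex conjugation $g\mapsto \bar g$ on $C^{k+\beta}(\T)$, which is $\R$-linear bounded;
(c) the Cauchy map $\phi\mapsto \Ca(\phi)\in \mathscr L(C^{k+\beta}(\T))$, which is analytic by Theorem~\ref{thm:cauchy};
(d) the continuous bilinear evaluation $\mathscr L(C^{k+\beta})\times C^{k+\beta}\to C^{k+\beta}$, $(T,g)\mapsto Tg$;
(e) multiplication $C^{k+\beta}\times C^{k+\beta}\to C^{k+\beta}$, which is continuous bilinear;
(f) differentiation $C^{k+\beta}\to C^{k-1+\beta}$, which is bounded linear;
(g) the real-linear projection $\Im\{\cdot\}$.
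Composing in the pattern of \eqref{eqn:F}, $\F^{k+\beta}$ is real-analytic into $C^{k-1+\beta}(\T,\R)$; the drop from $C^{k+\beta}$ to $C^{k-1+\beta}$ comes precisely from the factor $\phi'=1+f'$.

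\textbf{Symmetries of $B$.} Writing $\phi=w+f$ and using that $f\in X^{k+\beta}$ implies \eqref{eqn:sym:rot} and \eqref{eqn:sym:ref}, I compute
\begin{align*}
B(e^{2\pi i/m}w) &= \frac{1}{2\pi i}\int_\T \frac{\bar{\phi(\tau)}-\bar{\phi(e^{2\pi i/m}w)}}{\phi(\tau)-\phi(e^{2\pi i/m}w)}\phi'(\tau)\,d\tau.
\end{align*}
Substituting $\tau=e^{2\pi i/m}\sigma$ and using $\phi(e^{2\pi i/m}\sigma)=e^{2\pi i/m}\phi(\sigma)$ together with the consequence $\phi'(e^{2\pi i/m}\sigma)=\phi'(\sigma)$ (obtained by differentiating \eqref{eqn:sym:rot}) produces a prefactor $e^{-2\pi i/m}\cdot e^{2\pi i/m}/e^{2\pi i/m}=e^{-2\pi i/m}$ in front of $B(w)$. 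For the reflection symmetry I parametrize $\tau=e^{it}$ and substitute $t\mapsto -t$; combining $\phi(e^{-it})=\bar{\phi(e^{it})}$, $\phi'(e^{-it})=\bar{\phi'(e^{it})}$ (from \eqref{eqn:sym:ref}), $\phi(\bar w)=\bar{\phi(w)}$, and the extra sign picked up from conjugating $d\tau$ and the $1/(2\pi i)$ factor, the integral representations of $B(\bar w)$ and $\bar{B(w)}$ coincide.

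\textbf{Image lies in $Y^{k-1+\beta}$.} With the symmetries of $B$ in hand I set $A:=(\Omega\bar\phi+\tfrac12 B)w$, so that $\F^{k+\beta}(f,\Omega)=\Im\{A\phi'\}$. A short computation using $\bar{\phi(\bar w)}=\phi(w)$, $B(\bar w)=\bar{B(w)}$, and $\phi'(\bar w)=\bar{\phi'(w)}$ gives $A(\bar w)\phi'(\bar w)=\bar{A(w)\phi'(w)}$, so $\F^{k+\beta}(f,\Omega)(\bar w) = -\Im\{A(w)\phi'(w)\} = -\bar{\F^{k+\beta}(f,\Omega)(w)}$, since the output is real-valued. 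Similarly $A(e^{2\pi i/m}w)=A(w)$ and $\phi'(e^{2\pi i/m}w)=\phi'(w)$ yield invariance under the rotation $w\mapsto e^{2\pi i/m}w$. Together with the regularity established above, this puts $\F^{k+\beta}(f,\Omega)$ in $Y^{k-1+\beta}$. The only slightly delicate step is the parametric substitution in the Cauchy integral defining $B(\bar w)$, where one must carefully track the sign produced by reversing orientation on $\T$; I expect this to be the main (though still routine) obstacle.
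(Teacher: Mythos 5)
Your proposal is correct and follows essentially the same route as the paper, which simply cites Theorem~\ref{thm:cauchy} for the analyticity (via the chain of bounded multilinear and analytic building blocks you list) and refers to "straightforward manipulations" with the identities $\phi(e^{2\pi i/m}w)=e^{2\pi i/m}\phi(w)$, $\phi'(e^{2\pi i/m}w)=\phi'(w)$, $\phi(\bar w)=\bar{\phi(w)}$ for the symmetries — exactly the change-of-variables computations in the Cauchy integral that you carry out. Your sign-tracking in the reflection substitution (the $-1$ from conjugating $1/(2\pi i)$ cancelling the $-1$ from conjugating $d\tau$) is right, and your verification that $\Im\{A\phi'\}$ lands in $Y^{k-1+\beta}$ correctly fills in the paper's omitted details.
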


We easily calculate that, for any $\Omega \in \R$,
\begin{align*}
  \F^{k+\beta}(0,\Omega)
  = 
  \Im \left\{
    \left(
      \Omega\bar w
      + \frac 1{4\pi i} \int_\T 
      \frac{\bar\tau-\bar w} { \tau -
      w} \, d\tau 
    \right)
    w 
  \right\}
  = \Im \{ \Omega\abs w^2 - \tfrac 12 \} = 0,
\end{align*}
corresponding to the fact that the unit disc $D=\D$ is a rotating
vortex patch with angular velocity $\Omega$. We call these ``trivial''
solutions and introduce the following notation.

\begin{definition}[Trivial solutions, solution set]
  \label{def:triv}
  The set of ``trivial'' solutions is denoted by
  \begin{align*}
    \triv^{k+\beta} = \{ (0,\Omega) : \Omega \in \R \} \sub
    U^{k+\beta},
  \end{align*}
  and the full solution set by
  \begin{align*}
    \soln^{k+\beta} = \{ (f,\Omega) \in U^{k+\beta} : \F^{k+\beta}(f,\Omega) = 0 \}.
  \end{align*}
\end{definition}

While in the above discussion $k \ge 1$ and $\beta \in (0,1)$ have
been arbitrary, in what follows we will for the most part fix $k = 3$
and $\beta = \alpha \in (0,1)$. To simplify notation, we therefore
introduce the abbreviations
\begin{gather*}
  X := X^{3+\alpha},
  \quad
  Y := Y^{2+\alpha},
  \quad
  U := U^{3+\alpha},
  \quad
  U_1 := U_1^{3+\alpha},
  \quad
  U_2 := U_2^{3+\alpha},
  \quad
  U_3 := U_3^{3+\alpha},\\
  \quad
  \F := \F^{3+\alpha},
  \quad
  \soln := \soln^{3+\alpha},
  \quad
  \triv := \triv^{3+\alpha}.
\end{gather*}

\section{Local bifurcation}\label{sec:localbif}

In this section we describe the solution set $\soln$ near the axis
$\triv$ of trivial solutions. 
The main tools are the implicit function theorem and the following
analytic version of the classical Crandall--Rabinowitz theorem
\cite{cr:simple}.
\begin{theorem}[Theorem 8.3.1 in \cite{bt:analytic}]
  \label{thm:genlocal}
  Let $X,Y$ be real Banach spaces, $U \sub X \by \R$ an open set, and $\Fg \maps U
  \to Y$ a real-analytic function. Suppose that
  \begin{enumerate}[label=\rm(\alph*)]
  \item $\Fg(0,\lambda) = 0$ for all $\lambda$ in a neighborhood of
    $\lambda_0 \in \R$;
  \item 
    $\Fg_x(0,\lambda_0)$ is a Fredholm operator of index zero, with
    a one-dimensional kernel spanned by $\xi_0 \in X$; and
  \item 
    the ``transversality condition'' $\Fg_{x\lambda}(0,\lambda_0)\xi_0
    \notin \ran \Fg_x(0,\lambda_0)$ holds.
  \end{enumerate}
  Then $(0,\lambda_0)$ is a bifurcation point in the following sense.
  There exists $\varepsilon > 0$ and a pair of analytic functions
  $(\tilde x,\tilde \lambda) \maps (-\varepsilon,\varepsilon) \to \R
  \by U$ such that
  \begin{enumerate}[label=\rm(\roman*)]
  \item $\Fg(\tilde x(s),\tilde \lambda(s)) = 0$ for $s \in
    (-\varepsilon,\varepsilon)$;
  \item $\tilde x(0) =0$,  $\tilde \lambda(0) = \lambda_0$, and $\tilde x'(0) = \xi_0$; and
  \item there exists an open neighborhood $V$ of $(0,\lambda_0)$ in
    $\R \by X$ such that
    \begin{align*}
      \big\{(x,\lambda) \in V : \Fg(x,\lambda) = 0,\, x \ne 0\big\} =  
      \big\{(\tilde x(s),\tilde\lambda(s)) : 0 < \abs s < \varepsilon \big\}.
    \end{align*}
  \end{enumerate}
\end{theorem}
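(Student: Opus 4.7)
The plan is to carry out a Lyapunov--Schmidt reduction and apply the analytic implicit function theorem on Banach spaces twice. The skeleton parallels the smooth Crandall--Rabinowitz proof \cite{cr:simple}, but analyticity of $\Fg$ must be threaded through every step. Since $\Fg_x(0,\lambda_0)$ is Fredholm of index zero with one-dimensional kernel $\Span\{\xi_0\}$, decompose $X = \Span\{\xi_0\} \oplus X_1$ with $X_1$ closed and $Y = Y_0 \oplus Y_1$ with $Y_1 = \ran \Fg_x(0,\lambda_0)$ and $\dim Y_0 = 1$. Let $P$ be the continuous projection onto $Y_0$ with kernel $Y_1$, and $Q = I-P$. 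Writing $x = \alpha\xi_0 + z$ with $\alpha \in \R$ and $z \in X_1$, the equation $\Fg(x,\lambda) = 0$ splits into a range equation $Q\Fg(\alpha\xi_0 + z,\lambda) = 0$ and a bifurcation equation $P\Fg(\alpha\xi_0 + z,\lambda) = 0$. The $z$-derivative of the first at $(\alpha,z,\lambda) = (0,0,\lambda_0)$ is $\Fg_x(0,\lambda_0)|_{X_1}\colon X_1 \to Y_1$, which is an isomorphism, so the analytic implicit function theorem on Banach spaces (cf.\ \cite{bt:analytic}) yields an analytic solution $z = z(\alpha,\lambda)$ near $(0,\lambda_0)$ with $z(0,\lambda) \equiv 0$. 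Differentiating this identity, together with $\Fg(0,\lambda) \equiv 0$ and $\Fg_x(0,\lambda_0)\xi_0 = 0$, gives $\partial_\alpha z(0,\lambda_0) = \partial_\lambda z(0,\lambda_0) = 0$.

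\textbf{Reducing to a scalar equation and closing with transversality.} Substituting back, the remaining bifurcation equation becomes $B(\alpha,\lambda) = 0$ where
\begin{align*}
  B(\alpha,\lambda) := P\Fg\bigl(\alpha\xi_0 + z(\alpha,\lambda),\,\lambda\bigr)
\end{align*}
is analytic with values in the one-dimensional space $Y_0$. Since $\Fg(0,\lambda) \equiv 0$ and $z(0,\lambda) \equiv 0$, we have $B(0,\lambda) \equiv 0$, so expanding $B$ as a power series in $\alpha$ with analytic-in-$\lambda$ coefficients produces an analytic factorization $B(\alpha,\lambda) = \alpha\,b(\alpha,\lambda)$; nontrivial solutions correspond exactly to zeros of $b$. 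A short computation using the vanishing of $\partial_\alpha z$ and $\partial_\lambda z$ at $(0,\lambda_0)$ and $\Fg_x(0,\lambda_0)\xi_0 = 0$ gives
\begin{align*}
  \partial_\lambda b(0,\lambda_0) = \partial_\alpha\partial_\lambda B(0,\lambda_0) = P\Fg_{x\lambda}(0,\lambda_0)\xi_0,
\end{align*}
which is nonzero precisely because of the transversality hypothesis $\Fg_{x\lambda}(0,\lambda_0)\xi_0 \notin Y_1 = \ran \Fg_x(0,\lambda_0)$. A second application of the analytic implicit function theorem solves $b(\alpha,\lambda) = 0$ for $\lambda = \tilde\lambda(\alpha)$ analytic near $\alpha = 0$ with $\tilde\lambda(0) = \lambda_0$. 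Setting $\tilde x(\alpha) := \alpha\xi_0 + z(\alpha, \tilde\lambda(\alpha))$ and reparametrizing by $s = \alpha$ delivers (i) and (ii) with $\tilde x'(0) = \xi_0$, while the local uniqueness statement (iii) follows from the uniqueness built into both applications of the IFT (note that the range equation forces $z = 0$ when $\alpha = 0$, so no nontrivial $x \in X_1$ solutions are missed).

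\textbf{Main obstacle.} The skeleton of this argument is classical; the delicate point is propagating analyticity at every step. This requires the genuine analytic implicit function theorem in Banach spaces—not merely its smooth counterpart—and in the factoring step one must verify that $b$ obtained from $B = \alpha\,b$ is \emph{jointly} analytic in $(\alpha,\lambda)$ rather than just separately. Both points are by now standard, but this is precisely where the hypothesis that $\Fg$ is real-analytic (rather than merely $C^\infty$) is actually used; the rest of the argument only exploits Fredholm and transversality structure available already at the $C^1$ level.
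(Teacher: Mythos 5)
The paper offers no proof of this statement: it is imported verbatim as Theorem~8.3.1 of Buffoni--Toland \cite{bt:analytic}, whose proof is exactly the Lyapunov--Schmidt reduction combined with the analytic implicit function theorem that you outline. Your argument is correct and is essentially the canonical one: the range equation is solved by the analytic IFT since $\Fg_x(0,\lambda_0)|_{X_1}\colon X_1\to Y_1$ is an isomorphism; the factorization $B=\alpha\,b$ with $b(\alpha,\lambda)=\int_0^1\partial_\alpha B(t\alpha,\lambda)\,dt$ jointly analytic is legitimate; the computation $\partial_\lambda b(0,\lambda_0)=P\Fg_{x\lambda}(0,\lambda_0)\xi_0\ne 0$ uses transversality exactly as needed (the term $P\Fg_x(0,\lambda_0)\partial_\lambda\partial_\alpha z$ dies because $P$ annihilates $\ran\Fg_x(0,\lambda_0)$); and the local uniqueness in (iii) follows from the uniqueness clauses of the two IFT applications as you say. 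The only quibble is your closing remark that the non-analytic skeleton works ``at the $C^1$ level'': the transversality condition involves $\Fg_{x\lambda}$, so even the classical Crandall--Rabinowitz theorem requires continuity of this mixed second derivative.
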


With Theorem~\ref{thm:genlocal} in mind, we next calculate
$\F_f(0,\Omega)$.
\begin{lemma}\label{lem:Fdiff}
  The Fr\'echet derivative $\F_f(0,\Omega)$ is given by
  \begin{align}
    \label{eqn:Fdiff}
    \F_f(0,\Omega) g  &= 
    \Im \Big\{ \big(\Omega+\tfrac 12 w\big)g' + \Omega w \bar g \Big\}.
  \end{align}
  \begin{proof}
    Straightforward differentiation gives
    \begin{align*}
      \F_f(0,\Omega) g &= \Im\Bigg\{\bigg(
      \Omega\big[\bar w{g'(w)}+\bar{g(w)}\big]
      + \,{{g'(w)}}\frac1{4\pi i}\int_\T\frac{\bar\xi-\bar w}{\xi-w}d{\xi}
      + \frac1{4\pi i}\int_\T\frac{\bar\xi-\bar w}{\xi-w}g'(\xi)d\xi\notag\\ 
      & \qquad\qquad\qquad
      +\frac1{4\pi i}\int_\T\frac{\bar{g(\xi)}-\bar{g(w)}}{\xi-w}d\xi
      -\frac1{4\pi i}\int_\T\frac{(g(\xi)-g(w))(\bar\xi-\bar w)}{(\xi-w)^2}d\xi
      \bigg)w\Bigg\}.
    \end{align*} 
    The first integral is easy to compute:
    \begin{align*}
      \int_\T \frac{\bar \xi - \bar w}{\xi - w} \, d\xi
      =  \int_\T 
      \frac{ \frac 1\xi - \frac 1w }{\xi - w}\, d\xi
      = - \frac 1w \int_\T \frac {d\xi}\xi
      = - \frac {2\pi i}w,
    \end{align*}
    where here we've used the identity $\bar\xi = 1/\xi$ for $\xi
    \in \T$. It therefore suffices to show that the remaining
    integrals vanish. Letting $G$ be the holomorphic extension
    of $g$, we obtain through similar manipulations that
    \begin{align*}
      \int_\T\frac{\bar{g(\xi)}-\bar{g(w)}}{\xi-w}d\xi 
      = 
      \int_\T \frac{\bar\xi - \bar w}{\xi -
      w}g'(\xi)\,d\xi 
      = \int_\T\frac{(g(\xi)-g(w))(\bar\xi-\bar w)}{(\xi-w)^2}d\xi
      =
      -\frac {2\pi i}w G'(\infty) = 0,
    \end{align*}
    where $G'(\infty)$ vanishes thanks to $g \in X$. 
  \end{proof}
\end{lemma}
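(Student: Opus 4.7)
The plan is direct differentiation, treating the expression in \eqref{eqn:F} as the product of two factors
\[
  A(f) := \Omega(\bar w + \bar f) + \tfrac 12 \Ca(w+f)(\bar w + \bar f),
  \qquad
  B(f) := w(1+f'),
\]
so that the product rule gives $\F_f(0,\Omega)g = [A'(0)g]\,B(0) + A(0)\,[B'(0)g]$. The factor $B'(0)g = wg'$ is immediate. For $A'(0)g$, the explicit $\Omega\bar f$ term contributes the simple piece $\Omega\bar g$, while the linearization of $\Ca(w+f)(\bar w + \bar f)$ splits via the product and chain rules into a contribution $[D\Ca(w)g](\bar w)$ from varying the kernel and a contribution $\Ca(w)(\bar g)$ from varying the argument. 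Substituting the defining integral \eqref{eqn:cauchy}, one can write $\F_f(0,\Omega)g = \Im\{w\cdot(\cdots)\}$ where the bracket is a sum of four contour integrals on $\T$ together with the elementary terms $\Omega\bar wg'$ (from $A(0)B'(0)g$) and $\Omega\bar g$ (from the $\Omega\bar f$ piece of $A'(0)g$).

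The remaining task is to simplify these four integrals. One of them, $\int_\T \frac{\bar\tau - \bar w}{\tau - w}\,d\tau$, is computed in closed form by using $\bar\tau = 1/\tau$ on $\T$ to recognize the integrand as $-1/(w\tau)$, which integrates to $-2\pi i/w$. The other three integrals vanish. The key structural observation is that for $g \in X$ the Laurent series $g(w) = \sum_n a_n w^{-(nm-1)}$ defines a function $G$ holomorphic on $\C \without \overline\D$ with sufficient decay at infinity (the coefficients of $w$ and of $w^0$ in its Laurent expansion at $\infty$ are zero), whereas $\bar g$ restricted to $\T$ extends to a function holomorphic \emph{inside} $\D$ vanishing at the origin. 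Each vanishing integral can then be evaluated by contour deformation: the ones tested against functions holomorphic in the exterior vanish by pushing $\T$ out to $|w| = R \to \infty$ and using the decay of $g$ and $g'$, while the integral involving $\bar g(\tau) - \bar g(w)$ is handled via Cauchy's formula inside $\D$ together with the Plemelj jump relations on $\T$, the two $\pi i$ boundary contributions cancelling identically.

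Collecting the surviving contributions and simplifying using $w\bar w = 1$ on $\T$ then yields the stated formula. The argument is not analytically deep: the main source of difficulty is bookkeeping --- carefully keeping track of every term produced by the product-rule expansion, and confirming that each Cauchy-type integral is justified by the regularity of $\phi = w + f$ afforded by membership in the open set $U$ via Theorem~\ref{thm:cauchy}.
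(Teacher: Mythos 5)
Your proposal is correct and follows essentially the same route as the paper: the product-rule expansion at $f=0$ produces exactly the four Cauchy-type integrals appearing in the paper's intermediate display, the first is evaluated as $-2\pi i/w$ using $\bar\tau=1/\tau$, and the remaining three vanish because $g\in X$ extends holomorphically to $\C\without\overline\D$ with no $w^1$ or $w^0$ terms in its Laurent series (the paper phrases all three vanishings as $-\tfrac{2\pi i}{w}G'(\infty)=0$; your contour deformation to $\abs{w}=R\to\infty$ after substituting $\bar\xi=1/\xi$, together with Cauchy's theorem in $\D$ for the $\bar g$ integral, is the same mechanism). One caveat: collecting the surviving terms actually yields $\Im\{(\Omega-\tfrac 12)g'+\Omega w\bar g\}$ rather than the printed $(\Omega+\tfrac 12 w)g'$ --- the latter appears to be a typo in the statement, since the paper's own intermediate formula also leads to $(\Omega-\tfrac 12)g'$ and only that version lands in $Y$ and is consistent with the Fourier-multiplier formula used later --- so your closing claim that collecting terms ``yields the stated formula'' holds only after this correction.
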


Applying Theorem~\ref{thm:genlocal} together with the implicit
function theorem in our setting we obtain the following, where 
\begin{align*}
  \Omega_{nm} := \frac{nm-1}{2nm}, 
  \qquad n=1,2,3,\ldots
\end{align*}
are the critical frequencies from Kelvin's linear analysis, and 
the trivial solution set $\triv$ and full solution set
$\soln$ were introduced in Definition~\ref{def:triv} and the following
paragraph; see Figure~\ref{fig:localbif} for an illustration.
\begin{figure} 
  \centering
  \includegraphics[scale=1.1]{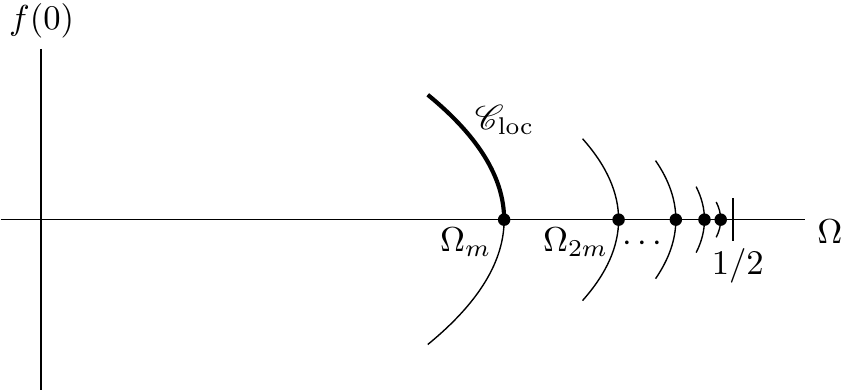} 
  \caption{The local curves constructed in Theorem~\ref{thm:locbif}
  near $\Omega=\Omega_m,\Omega_{2m},\ldots$. The portion in bold is
  the curve $\cm_\loc$ defined in Definition~\ref{def:loc}.}
  \label{fig:localbif}
\end{figure}
\begin{theorem}[Local structure]\label{thm:locbif}
  Fix $\Omega \in \R$. 
  \begin{enumerate}[label=\rm(\roman*)]
  \item \label{thm:locbif:imp} 
    \textup{(No bifurcation)}
    If $\Omega$ is not one of the $\Omega_{nm}$, then there is a
    neighborhood $V$ of $(0,\Omega)$ in $U$ such that
    $\soln \cap V \sub \triv$.
  \item \label{thm:locbif:bif} 
    \textup{(Bifurcation)}
    For every $n$, 
    $(0,\Omega_{nm})$ is a bifurcation point in the following sense.
    There exists $\varepsilon > 0$ and a pair of analytic functions
    $(\tilde f,\tilde \Omega) \maps (-\varepsilon,\varepsilon) \to 
    U$ with the following properties:
    \begin{enumerate}[label=\rm(\alph*)]
    \item $\F(\tilde f(s),\tilde \Omega(s)) = 0$ for $s \in
      (-\varepsilon,\varepsilon)$;
    \item $\tilde f(0) = 0$, $\tilde \Omega(0) = \Omega_{nm}$, $\tilde f_s(0) = 1/w^{nm-1}$, and $\tilde \Omega_s(0) = 0$; and
    \item \label{thm:locbif:uniq} there exists an open neighborhood $V$ of $(0,\Omega_{nm})$ in
      $X \by \R$ such that
      \begin{align*}
        V \cap (\soln \without \triv) = 
        \big\{(\tilde f(s),\tilde\Omega(s)) : 0 < \abs s < \varepsilon \big\}.
      \end{align*}
    \end{enumerate}
  \end{enumerate}
\end{theorem}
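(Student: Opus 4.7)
The plan is to reduce both parts to a Fredholm analysis of the linearization $\F_f(0,\Omega)\maps X \to Y$, and then to invoke the analytic implicit function theorem for~(i) and Theorem~\ref{thm:genlocal} for~(ii). The crucial input is an explicit Fourier diagonalization: using Lemma~\ref{lem:Fdiff} and simplifying via $w\bar w = 1$ on $\T$, a direct computation on the basis element $w^{1-nm}\in X$ yields
\[
  \F_f(0,\Omega)(w^{1-nm}) \;=\; nm\,(\Omega - \Omega_{nm})\,\sin(nmt).
\]
Since $\{\sin(nmt)\}_{n\ge 1}$ is a real Fourier basis of $Y$ (the odd, $(2\pi/m)$-periodic real-valued functions guaranteed by the symmetry constraints), $\F_f(0,\Omega)$ is diagonal with eigenvalues $nm(\Omega - \Omega_{nm})$, which grow linearly in $n$. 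Splitting $\F_f(0,\Omega)$ into a first-order isomorphism plus a compact, zero-order remainder (via the compact inclusion $C^{3+\alpha}(\T)\hookrightarrow C^{2+\alpha}(\T)$) shows it is Fredholm of index zero, and the diagonalization implies invertibility exactly when $\Omega\notin\{\Omega_{nm}\}_{n\ge 1}$; at $\Omega=\Omega_{nm}$ the kernel is spanned by $\xi_0 := w^{1-nm}$ and the cokernel by $\sin(nmt)$.

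Part~(i) is then an immediate application of the analytic implicit function theorem to $\F$ near $(0,\Omega)$. For part~(ii), I would verify the hypotheses of Theorem~\ref{thm:genlocal} at $(0,\Omega_{nm})$: (a)~the trivial branch $\F(0,\,\cdot\,)\equiv 0$ was already recorded just before Definition~\ref{def:triv}; (b)~is the Fredholm/kernel statement above; (c)~transversality follows by differentiating the eigenvalue formula in $\Omega$, which gives $\F_{f\Omega}(0,\Omega_{nm})\xi_0 = nm\,\sin(nmt)$, precisely the missing direction and hence not in $\ran \F_f(0,\Omega_{nm})$. Theorem~\ref{thm:genlocal} then produces the claimed analytic functions $(\tilde f,\tilde\Omega)$ with $\tilde f(0)=0$, $\tilde\Omega(0)=\Omega_{nm}$, $\tilde f_s(0)=\xi_0$, together with the local uniqueness statement~\ref{thm:locbif:uniq}.

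The subtlest remaining claim is the pitchfork identity $\tilde\Omega_s(0)=0$. A standard Lyapunov--Schmidt reduction identifies $\tilde\Omega_s(0)$, up to a nonzero scalar, with the projection of $\F_{ff}(0,\Omega_{nm})(\xi_0,\xi_0)$ onto the cokernel direction $\sin(nmt)$, so it suffices to prove this projection vanishes. I would verify this by Taylor expanding $\F(\epsilon\,\xi_0,\Omega_{nm})$ to order $\epsilon^2$, computing $D\Ca(w)(\xi_0)$ and $D^2\Ca(w)(\xi_0,\xi_0)$ by direct differentiation of the defining Cauchy integral (legitimate by the analyticity in Theorem~\ref{thm:cauchy}). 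A careful bookkeeping of the Fourier modes appearing in the quadratic term then shows that $\F_{ff}(0,\Omega_{nm})(\xi_0,\xi_0)$ is a combination of a real constant and modes of frequency $\pm 2(nm-1)$ shifted by at most $\pm 1$, none of which hits $\sin(nmt)$, except in the borderline case $nm=2$ where a direct calculation collapses the quadratic term to a real constant with vanishing imaginary part. This Fourier cancellation is where I expect the main effort to lie.
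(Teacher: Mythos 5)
Your treatment of parts (i) and (ii)(a)--(c) matches the paper's: the same Fourier diagonalization of $\F_f(0,\Omega)$ (the paper's \eqref{df0}, up to a factor of $2$ from $i(\bar w^{nm}-w^{nm})=2\sin(nmt)$), the implicit function theorem for (i), and Theorem~\ref{thm:genlocal} with the multiplier-based transversality check for (ii). One caveat on your Fredholm argument: the splitting of $\Im\{(\Omega+\tfrac12 w)g'+\Omega w\bar g\}$ into the ``first-order part'' $g\mapsto\Im\{(\Omega+\tfrac12w)g'\}$ plus a zero-order remainder does not work as stated — that first-order piece does not even map $X$ into $Y$ (it produces frequencies $nm-1$), and its coefficient $\Omega+\tfrac12w$ has winding number $1$ for $|\Omega|<1/2$, which covers every $\Omega_{nm}$, so Lemma~\ref{lem:rh} would not apply to it. The paper's decomposition in Lemma~\ref{lem:fredholm} instead takes $L_1g=\Im\{\mathscr A(0,\Omega)g'\}$ with $\mathscr A(0,\Omega)=\Omega-\tfrac12$ a nonvanishing \emph{constant}; alternatively one can read Fredholmness directly off the multiplier $nm(\Omega-\tfrac12)+\tfrac12$, which grows linearly precisely when $\Omega\ne\tfrac12$ (and $(0,\tfrac12)\notin U_2$).

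The genuine gap is in the pitchfork identity $\tilde\Omega_s(0)=0$, which is the only nonstandard claim in the theorem. You correctly reduce it to showing that $\F_{ff}(0,\Omega_{nm})[\xi_0,\xi_0]$ has no component along the cokernel direction $\sin(nmt)$, but the proposed Fourier bookkeeping is both unexecuted and, as stated, insufficient: with $k=nm$, your claimed frequency set $\{0\}\cup\{\pm(2k-2)+j:|j|\le1\}$ contains $k$ not only when $k=2$ (which you flag) but also when $k=3$, since $2k-3=k$ there — and $m=3$, $n=1$ is a principal case of the theorem. The correct statement is that the quadratic term contains only frequencies that are multiples of $2nm$, and the clean way to see this is the equivariance argument the paper uses: $\F(T_Xf,\Omega)=T_Y\F(f,\Omega)$ for $T_Xf(w)=e^{-i\pi/(nm)}f(e^{i\pi/(nm)}w)$, under which $T_X\xi_0=-\xi_0$, so the bifurcation-formula numerator $\langle\ell,\F_{ff}[\xi_0,\xi_0]\rangle$ equals its own negative. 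This costs nothing and avoids expanding $D^2\Ca$ altogether; if you insist on the direct computation, you must actually carry it out and in particular resolve the $nm=3$ resonance, not just the $nm=2$ one.
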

\begin{proof}
  We have already shown in Lemma~\ref{lem:analytic} that $\F$ is an
  analytic operator, and we have checked that $\F(0,\Omega)=0$ for all
  $\Omega$. Thus Lemma~\ref{lem:fredholm} in the next section implies
  that $\F_f(0,\Omega)$ is Fredholm with index 0 (this can also be
  verified directly). Expanding $g \in X$ in a Fourier series 
  \begin{align*}
    g(w)=\sum_{k\geq 1}\frac{a_k}{{w}^{km-1}},
  \end{align*}
  we see from Lemma~\ref{lem:Fdiff} that the operator $\F_f(0,\Omega)$
  in \eqref{eqn:Fdiff} is the Fourier multiplier
  \begin{equation}\label{df0}
    \F_f(0,\Omega) \sum_{k \ge 1}\frac{a_k}{w^{km-1}}
    =
    i\sum_{k\geq 1}km(\Omega-\Omega_{km}) a_k(\bar{w}^{km}-w^{km}). 
  \end{equation}
  If $\Omega$ is not one of the $\Omega_{nm}$, then
  \eqref{df0} shows that $\F_f(0,\Omega)$ has trivial kernel. Since it
  is Fredholm with index 0 it is therefore invertible, and 
  \ref{thm:locbif:imp} follows from the implicit function theorem.

  So consider the trivial solution $(0,\Omega_{nm})$ for some $n \ge
  1$. The strict monotonicity of the sequence $k\mapsto
  \Omega_{km}$ implies that the kernel of $\F_f(0,\Omega_{nm})$ is
  indeed the one-dimensional vector space spanned by $f_{nm}(w)=
  \bar{w}^{nm-1}$. Next we verify the transversality condition
  $\F_{f\Omega}(0,\Omega_{nm}) f_{nm} \notin \ran
  \F_f(0,\Omega_{nm})$. Differentiating \eqref{df0} with respect to
  $\Omega$ yields
  \begin{alignat*}{2} 
    \F_{f\Omega}(0,\Omega)g(w)&=2\F_f(0,\Omega)g(w)=
    {i}\sum_{k\geq 1}km a_{k}\big(\bar{w}^{km}-w^{km}\big),
  \end{alignat*} 
  and so we calculate
  \begin{align*}
    \F_{f\Omega}(0,\Omega_{nm})f_{nm}=i{nm} (\bar{w}^{nm}-w^{nm})\notin \ran(\F_{f}(0,\Omega_{nm}))
  \end{align*}
  as desired. Part \ref{thm:locbif:bif} of the theorem now follows
  immediately from Theorem~\ref{thm:genlocal}, except for the assertion
  that $\tilde \Omega_s(0) = 0$. 
 
  This derivative can be calculated directly using so-called
  ``bifurcation formulas'' \cite[Section~I.6]{kielhofer}. As is often
  the case for pitchfork bifurcations, however, the full calculation
  is unnecessary because of symmetry considerations. Set $n=1$ and let
  $\ell \in Y^*$ be a linear functional with $\ran
  \F_f(0,\tilde\Omega(0)) = \ker \ell$. Then the transversality
  condition becomes $\langle \ell,\F_{f\Omega}(0,\tilde\Omega(0))\tilde
  f_s(0)\rangle \ne 0$. Differentiating $\F(\tilde f(s),\tilde\Omega(s))= 0$
  with respect to $s$ we discover that 
  \begin{align*}
    \F_{ff}(0,\tilde\Omega(0)) [\tilde f_s(0),\tilde f_s(0)]
    + \F_f(0,\tilde\Omega(0)) \tilde f_{ss}(0)
    + 2\tilde\Omega_s(0)\F_{f\Omega}(0,\tilde\Omega(0)) \tilde f_s(0).
  \end{align*}
  Testing against $\ell$, the $\F_f$ term drops out and we are left
  with
  \begin{align*}
    \big\langle 
      \ell,
      \F_{ff}(0,\tilde\Omega(0)) [\tilde f_s(0),\tilde f_s(0)]
      + 2\tilde\Omega_s(0)\F_{f\Omega}(0,\tilde\Omega(0)) \tilde f_s(0)
    \big\rangle
    = 0
  \end{align*}
  and hence
  \begin{align}
    \label{eqn:avoidme}
    \tilde\Omega_s(0) = - \frac 12 
    \frac{\big\langle \ell,
    \F_{ff}(0,\tilde\Omega(0)) [\tilde f_s(0),\tilde f_s(0)]
    \big\rangle}
    {\big\langle \ell,\F_{f\Omega}(0,\tilde\Omega(0)) \tilde f_s(0)
    \big\rangle}.
  \end{align}
  
  Now we rotate by $\pi/m$ and repeat the above calculation. Consider
  the invertible linear maps $T_X \maps X \to X$ and $T_Y \maps Y \to
  Y$ defined by $T_Xf(w) = e^{-{i\pi/m}}f(e^{i\pi/m}w)$ and $T_Y h(w)
  = h(e^{i\pi/m}w)$. It is straightforward to verify that $\F$
  commutes with $T_X,T_Y$ in that $\F(T_X f,\Omega) = T_Y
  \F(f,\Omega)$. In particular, $\F(\tilde f(s),\tilde \Omega(s)) = 0$
  implies that $\F(T_X \tilde f(s),\tilde \Omega(s)) = 0$. The
  analogue of \eqref{eqn:avoidme} is then
  \begin{align*}
    \tilde\Omega_s(0) = - \frac 12 
    \frac{\big\langle \ell,
    \F_{ff}(0,\tilde\Omega(0)) [T_X \tilde f_s(0),T_X \tilde f_s(0)]
    \big\rangle}
    {\big\langle \ell,\F_{f\Omega}(0,\tilde\Omega(0)) T_X \tilde f_s(0)
    \big\rangle},
  \end{align*}
  Plugging in the fact that $T_X \tilde f_s(0) = - \tilde f_s(0)$, we
  obtain
  \begin{align*}
    \tilde\Omega_s(0) = + \frac 12 
    \frac{\big\langle \ell,
    \F_{ff}(0,\tilde\Omega(0)) [\tilde f_s(0),\tilde f_s(0)]
    \big\rangle}
    {\big\langle \ell,\F_{f\Omega}(0,\tilde\Omega(0)) \tilde f_s(0)
    \big\rangle},
  \end{align*}
  which differs from \eqref{eqn:avoidme} only in sign. Thus
  $\tilde\Omega_s(0) = 0$ as desired. 

  For $n > 1$ one can repeat the above calculation of $\tilde
  \Omega_s(0)$ by replacing $X,Y$ with the subspaces $X_n,Y_n$ with
  $nm$-fold symmetry and then use the uniqueness in
  \ref{thm:locbif:uniq}.
\end{proof}

The global curve $\cm$ which we will construct is a continuation
of the local curve constructed in Theorem~\ref{thm:locbif} that
bifurcates from $(0,\Omega_m)$. As hinted at in the above proof, the
portions of the curve with $s > 0$ and $s < 0$ are related by the
symmetry $T_X$, and so there is no loss of generality in restricting
to $s > 0$. Thus we make the following definition.
\begin{definition}[The local curve $\cm_\loc$]\label{def:loc}
  With $\varepsilon, \tilde f, \tilde \Omega$ as in
  Theorem~\ref{thm:locbif}\ref{thm:locbif:bif} with $n=1$, we define
  $\cm_\loc \sub \soln$ to be the portion of the bifurcation curve
  with $s > 0$, that is 
  \begin{align*}
    \cm_\loc := \{ (\tilde f(s),\tilde \Omega(s)) : 0 < s < \varepsilon \}.
  \end{align*}
\end{definition}

\section{Global bifurcation}\label{sec:global}

In this section we apply an abstract result on analytic global
bifurcation (Theorem~\ref{thm:genglobal}) to extend the local curve
$\cm_\loc$ from the previous section to a global one. In order to
apply Theorem~\ref{thm:genglobal}, we need to verify that the
linearized operators $\F_f(f,\Omega)$ that we will encounter along
this curve are Fredholm with index zero, and also that the curve has
certain compactness properties. Both of these tasks will be
accomplished by viewing $\F(f,\Omega)=0$ as the quasilinear
Riemann--Hilbert-type problem $\Im(A\phi')=0$, applying
Lemma~\ref{lem:rh}, and using the good control we have over the
coefficient $A$.

We first verify that the coefficient $A$ appearing in our
Riemann--Hilbert problem has winding number zero.
\begin{lemma}[Winding number]\label{lem:winding}
  Suppose that $(\phi-w,\Omega) \in \soln^{k+\beta}$ for some integer
  $k \ge 1$ and $\beta \in (0,1)$. Then $A := ( \Omega\bar{ \phi } +
  \tfrac 12 \Ca(\phi) \bar\phi) w$ has winding number $0$ in that
  \begin{align}
    \label{eqn:windingsense}
    A \ne 0,
    \qquad \arg A(e^{it})\Big|^{t=2\pi}_{t=0} = 0
  \end{align}
  for some continuous branch of $\arg$.
  \begin{proof}
    By \eqref{eqn:rh} we have $\Im(A\phi') = 0$, while
    $(\phi-w,\Omega) \in U_2^{k+\beta} \cap U_3^{k+\beta}$ implies $A
    \ne 0$ and $\phi' \ne 0$. Thus $A = \lambda \bar{\phi'}$ for some
    real-valued and non-vanishing $\lambda \in C^{k-1+\beta}(\T)$,
    and it suffices to show that $\phi'$ has winding number zero with
    respect to the origin in the sense of \eqref{eqn:windingsense}. As
    in Section~\ref{sec:space}, let $\Phi$ be the holomorphic
    extension of $\phi$ to $\C \without \D$. Then $\phi-w \in
    X^{k+\beta}$ implies $\Phi' = O(1)$ and $\Phi'' = O(1/w^{m+1})$ as
    $\abs w \to \infty$, and so in particular $\Phi''/\Phi' =
    O(1/w^3)$. Thus
    \begin{align*}
      \arg \phi'(e^{it})\Big|^{t=2\pi}_{t=0} 
      = \lim_{r \to \infty} \arg \Phi'(re^{it})\Big|^{t=2\pi}_{t=0}
      = \lim_{r \to \infty} \frac 1{2\pi i}\int_{\abs w = r} \frac{\Phi''(w)}{\Phi'(w)}\, dw = 0
    \end{align*}
    as desired.
  \end{proof}
\end{lemma}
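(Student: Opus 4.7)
The plan is to reduce the winding number of $A$ to the winding number of $\phi'$ on $\T$, and then to compute the latter using the holomorphic extension of $\phi$ to the exterior of the unit disk. Nonvanishing of $A$ is immediate, since $(\phi-w,\Omega) \in U_2^{k+\beta}$ is exactly the hypothesis that $A \ne 0$ on $\T$, so only the vanishing of the total change of argument requires work.

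First I would use that $\F^{k+\beta}(\phi-w,\Omega)=0$ is just the equation $\Im\{A\phi'\} = 0$ on $\T$, so $A\phi'$ is real-valued there. From Lemma~\ref{lem:u1u3} (or directly from $(\phi-w,\Omega) \in U_3^{k+\beta}$), one also has $\phi' \ne 0$ on $\T$. Consequently $\lambda := A\phi'/\abs{\phi'}^2$ is a real-valued, nowhere-vanishing continuous function on $\T$, hence of constant sign, and $A = \lambda\bar{\phi'}$. Since $\lambda$ has zero winding as a map $\T \to \R\without\{0\}$, the winding number of $A$ equals that of $\bar{\phi'}$, which is the negative of the winding number of $\phi'$.

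Second I would compute the winding of $\phi'$ using its holomorphic extension $\Phi$ to $\C \without \overline\D$. The defining form of elements of $X^{k+\beta}$ gives $\Phi(w) = w + O(w^{1-m})$ at infinity, so in particular $\Phi'(w) \to 1$ as $\abs w \to \infty$. Combined with membership in $U_1^{k+\beta}$ and Theorem~\ref{thm:cauchy}, $\phi$ is injective on $\T$; then by the standard extension theory for conformal maps of the disk (as recalled in Section~\ref{sec:space}), $\Phi$ is conformal on $\C \without \overline\D$, and so $\Phi'$ is nowhere zero there. Applying the argument principle to $\Phi'$ in the annulus $\{1 < \abs w < r\}$, the winding of $\Phi'$ on $\abs w = r$ equals the winding of $\phi'$ on $\T$ for every $r>1$. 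For large $r$, $\Phi'$ maps the circle $\abs w = r$ into a neighborhood of $1$ which does not contain $0$, so that winding is $0$; being integer-valued and constant in $r$, the winding of $\phi'$ on $\T$ must also vanish, completing the proof.

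There is no real obstacle here beyond assembling the right pieces of the setup in the right order. The only genuine subtlety is that the argument requires $\phi' \ne 0$ on $\T$ so that $\arg \phi'$ is even well-defined — but this is precisely what the inclusion $U_3^{k+\beta} \sub U_1^{k+\beta}$ from Lemma~\ref{lem:u1u3} guarantees.
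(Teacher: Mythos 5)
Your proposal is correct and follows essentially the same route as the paper: reduce to the winding number of $\phi'$ via the identity $A=\lambda\bar{\phi'}$ with $\lambda$ real and non-vanishing, then transfer the computation to a large circle using the holomorphic extension $\Phi$ (the paper evaluates $\int \Phi''/\Phi'$ there using the decay $\Phi''/\Phi'=O(1/w^3)$, while you note $\Phi'\to 1$ so the image of a large circle avoids the origin — the same idea). Your version has the minor virtue of making explicit the non-vanishing of $\Phi'$ in the exterior via conformality, which the paper leaves implicit when passing from $\T$ to $|w|=r$.
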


With Lemma~\ref{lem:winding} in hand, we can now use
Lemma~\ref{lem:rh} to establish Fredholm properties for the linearized
operators $\F_f(f,\Omega)$.

\begin{lemma}[Fredholm index~0]\label{lem:fredholm}
  For any $(f,\Omega) \in \soln$, 
  the linearized operator $\F_f(f,\Omega) \maps X \to Y$ is Fredholm
  with index~0.
  \begin{proof}
    By Theorem~\ref{thm:cauchy} we can write
    \begin{align*}
      \F(f,\Omega) &= 
      \Im \left\{
        \big(
      \Omega(\bar w + \bar f)
      + \tfrac 12 \Ca(w + f) (\bar w + \bar f)
      \big)
      w (1 + f')
      \right\}\\
      &=: \Im \{ \mathscr A(f,\Omega) (1+f') \},
    \end{align*}
    where $\mathscr A$ is analytic
    $U_2^{k+\beta} \to C^{k+\beta}(\T)$ for any $k \ge 1$ and
    $\beta \in (0,1)$. Differentiating, we find 
    \begin{align*}
      \F_f(f,\Omega)g = 
      \Im\{\mathscr A(f,\Omega)g'\}
      + \Im\{(1+f') \mathscr A_f(f,\Omega)g\}
      =: L_1 g + L_2 g.
    \end{align*}
    By Lemmas~\ref{lem:analytic} and \ref{lem:winding}, $A = \mathscr
    A(f,\Omega)$ satisfies the hypotheses of Lemma~\ref{lem:rh} and so
    $L_1 \maps X \to Y$ is invertible. We claim that $L_2 \maps X \to
    Y$ is compact. Let $g_n$ be a bounded sequence in
    $X=X^{3+\alpha}$, and extract a subsequence so that $g_n \to g$ in
    $X^{3+\alpha/2}$. The analyticity of $\mathscr A \maps
    U^{3+\alpha/2} \to C^{3+\alpha/2}(\T)$ then guarantees that
    $\mathscr A_f(f,\Omega)g_n \to \mathscr A_f(f,\Omega)g_n$ in
    $C^{3+\alpha/2}(\T)$ and hence also in $Y=Y^{2+\alpha}$, proving
    the claim. Thus $\F_f(f,\Omega)$ is sum of an invertible operator
    and a compact operator and is hence Fredholm with index 0 as
    desired.
  \end{proof}
\end{lemma}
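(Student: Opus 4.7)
The plan is to exploit the quasilinear Riemann--Hilbert structure of $\F$ and split $\F_f(f,\Omega)$ into an invertible part and a compact part. Writing $\phi = w + f$ and
\[
A = A(f,\Omega) = \bigl(\Omega\bar\phi + \tfrac 12\Ca(\phi)\bar\phi\bigr)w,
\]
we have $\F(f,\Omega) = \Im\{A\,(1+f')\}$. Differentiating in $f$ by the product rule, and noting that every occurrence of $f$ inside $A$ is either algebraic or passes through $\Ca$ (both zeroth-order in the input), we get the decomposition
\[
  \F_f(f,\Omega)g = \Im\{A\,g'\} + \Im\{(1+f')\,A_f(f,\Omega)g\} =: L_1 g + L_2 g,
\]
where $L_1$ contains all the $g'$ terms and $L_2$ depends on $g$ only through pointwise multiplication and through $\Ca$-type terms.

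To handle $L_1$, I would apply Lemma \ref{lem:rh}\ref{lem:rh:inv} directly. Its hypotheses amount to three facts about $A$ that are already available at any solution in $\soln$: nonvanishing is built into the definition of $U_2$; the symmetries $A(\bar w) = \bar{A(w)}$ and $A(e^{2\pi i/m}w) = A(w)$ follow from those of $\phi$ and from the symmetry of $\Ca(\phi)\bar\phi$ recorded in Lemma \ref{lem:analytic} (the extra factor $w$ flips the rotation character correctly); and the winding-number-zero condition is precisely Lemma \ref{lem:winding}. Lemma \ref{lem:rh}\ref{lem:rh:inv} then yields that $L_1 \colon X \to Y$ is a linear isomorphism, hence Fredholm of index $0$.

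The second step is to show $L_2 \colon X \to Y$ is compact. The key observation is that $L_2 g$ contains no derivative of $g$: it is built from pointwise products with $\bar g$ and from the linearization of $\Ca(\phi)\bar\phi$ in $\phi$, which by Theorem \ref{thm:cauchy} is a bounded operator on $C^{k+\beta}(\T)$ without loss of derivative. Taking a bounded sequence $g_n$ in $X = X^{3+\alpha}$, I pass to a subsequence converging in $X^{3+\alpha/2}$ by Arzel\`a--Ascoli; the analyticity of $\Ca$ on $\tilde U_1^{3+\alpha/2}$ guarantees $A_f(f,\Omega)g_n$ converges in $C^{3+\alpha/2}(\T)$, and multiplication by $1+f' \in C^{2+\alpha}(\T)$ then gives convergence in $Y = Y^{2+\alpha}$.

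Once both pieces are in hand, the conclusion is immediate: $\F_f(f,\Omega) = L_1 + L_2$ is the sum of an invertible operator and a compact one, and such operators are Fredholm of index $0$. The main subtlety I expect is bookkeeping in the second step, namely making sure that when differentiating $A$ in $f$ one really obtains only Cauchy-type contributions (no $g'$ terms), and that the Cauchy-integral analyticity of Theorem \ref{thm:cauchy} is used at a regularity strictly above $Y$'s to get genuine compactness rather than mere boundedness.
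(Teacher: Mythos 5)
Your proposal is correct and follows essentially the same route as the paper: the identical splitting $\F_f(f,\Omega)g = \Im\{A g'\} + \Im\{(1+f')A_f(f,\Omega)g\}$, invertibility of the first piece via Lemma~\ref{lem:rh} (with the hypotheses supplied by Lemmas~\ref{lem:analytic} and \ref{lem:winding}), and compactness of the second piece by passing to a convergent subsequence in $X^{3+\alpha/2}$ and using the analyticity of $\Ca$ at the intermediate H\"older exponent. No gaps.
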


To prove the desired compactness properties for $\soln$, we introduce
a family of closed and bounded sets $E_\delta^{k+\beta} \sub
U^{k+\beta}$ which exhaust $U^{k+\beta}$ as $\delta \to 0$ and which
make the various conditions in the definitions of
$U_1^{k+\beta},U_2^{k+\beta},U_3^{k+\beta}$ quantitative. Here, as
always, the integer $k \ge 1$ and $\beta \in (0,1)$.

\begin{lemma}\label{lem:Edelta}
  For any $\delta > 0$, the set $E_\delta^{k+\beta} \sub X^{k+\beta}
  \by \R$ defined by the inequalities
  \begin{align}
    \label{eqn:delta}
    \min_\T 
    \abs{\Omega\bar{ \phi } + \tfrac 12 \Ca(\phi) \bar\phi}
    ,\,
    \frac 1{1+\abs\Omega+\n{ \phi}_{C^{k+\beta}}}
    ,\,
    \frac \pi 2 - \max_\T \left| \arg \frac{w \phi'}{\phi} \right|
    ,\,
    \min_\T \abs{\phi'}
    ,\,
    \min_\T \abs{\phi}  \ge \delta,
  \end{align}
  is a closed and bounded subset of $U^{k+\beta}$. Moreover, for any
  $(\phi-w,\Omega) \in U^{k+\beta}$ there exists $\delta > 0$ so that
  $(\phi-w,\Omega) \in E_\delta^{k+\beta}$.
  \begin{proof}
    First we prove the last statement. If $(\phi-w,\Omega) \in 
    U^{k+\beta}$, then the existence of a bound on $\abs {\Omega\bar{
      \phi } + \tfrac 12 \Ca(\phi) \bar\phi}$ follows from $(\phi-w,\Omega)
    \in  U_2^{k+\beta}$, the second bound is immediate, and the
    remaining bounds follow from $(\phi-w,\Omega) \in 
    U_3^{k+\beta}$.

    It remains to show that $E_\delta^{k+\beta} \sub U^{k+\beta}$ is
    closed and bounded. The boundedness is clear, as is the
    containment $E_\delta^{k+\beta} \sub  U_2^{k+\beta} \cap
    U_3^{k+\beta}$. The containment $E_\delta^{k+\beta} \sub
    U_1^{k+\beta}$ then follows from Lemma~\ref{lem:u1u3}. The second
    and final two conditions in \eqref{eqn:delta} are clearly closed,
    and the third is also closed in combination with the final two
    since they avoid the potential singularities in the $\arg$
    function. Finally, from the last three inequalities in
    \eqref{eqn:delta} one can check that $\dist_{ X^{k+\beta}\by
    \R}(E_\delta^{k+\beta},\dell  U_3^{k+\beta}) > 0$. Again by
    Lemma~\ref{lem:u1u3} we have $ U_3^{k+\beta} \sub U_1^{k+\beta}$,
    and so we conclude that the closure of $E^{k+\beta}_\delta$ is
    contained in $U_1^{k+\beta}$. Since the mapping $(\phi-w,\Omega)
    \mapsto \Omega\bar{ \phi } + \tfrac 12 \Ca(\phi) \bar\phi$ is
    analytic $ U_1^{k+\beta} \to C^{k+\beta}(\T)$ by
    Theorem~\ref{thm:cauchy}, the mapping $(\phi-w,\Omega) \mapsto
    \min_\T \abs{\Omega \bar\phi + \tfrac 12 \Ca(\phi)\bar\phi}$ is
    continuous, and so the first condition in \eqref{eqn:delta} is
    closed.
  \end{proof}
\end{lemma}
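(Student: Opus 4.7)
\smallskip

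\noindent\textbf{Proof proposal.} The plan is to dispatch the ``moreover'' part first and then verify closedness, boundedness, and the inclusion $E_\delta^{k+\beta} \sub U^{k+\beta}$ inequality-by-inequality, using Lemma~\ref{lem:u1u3} and Theorem~\ref{thm:cauchy} to handle the Cauchy-integral term.

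For the last statement, fix $(\phi-w,\Omega)\in U^{k+\beta}=U_2^{k+\beta}\cap U_3^{k+\beta}$. The second quantity in \eqref{eqn:delta} is obviously bounded below by some positive number. By definition of $U_2^{k+\beta}$, the continuous function $|\Omega\bar\phi+\tfrac12\Ca(\phi)\bar\phi|$ has a positive minimum on the compact set $\T$, giving a lower bound for the first quantity. The definition of $U_3^{k+\beta}$ gives $\Re(w\phi'/\phi)>0$ on $\T$, which is a continuous function (since $\phi\ne 0$ and $\phi'\ne 0$ on $\T$ by Lemma~\ref{lem:u1u3} applied together with the characterization of $\tilde U_1^{k+\beta}$ in Theorem~\ref{thm:cauchy}), hence has a positive minimum, forcing $|\arg(w\phi'/\phi)|$ to stay strictly less than $\pi/2$ on $\T$. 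This also gives the two lower bounds $\min|\phi|,\min|\phi'|>0$. Taking $\delta$ smaller than all five positive quantities puts $(\phi-w,\Omega)$ in $E_\delta^{k+\beta}$.

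Now I turn to the forward direction. Boundedness of $E_\delta^{k+\beta}$ in $X^{k+\beta}\by\R$ is immediate from the second inequality in \eqref{eqn:delta}, which gives $|\Omega|+\n{\phi}_{C^{k+\beta}}\le 1/\delta-1$. The inclusion $E_\delta^{k+\beta}\sub U_2^{k+\beta}\cap U_3^{k+\beta}$ is a direct reading of the other four inequalities: the first gives membership in $U_2^{k+\beta}$, and the last three together with the $\arg$ inequality ensure $\Re(w\phi'/\phi)>0$, giving membership in $U_3^{k+\beta}$. By Lemma~\ref{lem:u1u3}, $U_3^{k+\beta}\sub U_1^{k+\beta}$, so $E_\delta^{k+\beta}\sub U^{k+\beta}$.

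For closedness, the second inequality is trivially closed since $\phi\mapsto\n{\phi}_{C^{k+\beta}}$ is continuous. The last two ($\min|\phi|,\min|\phi'|\ge\delta$) are closed because $\phi\mapsto\phi,\phi\mapsto\phi'$ are continuous $X^{k+\beta}\to C^0(\T)$. The third ($\arg$) inequality is only well-defined where $\phi$ and $\phi'$ are non-vanishing, but once we combine it with the last two, the map $(\phi-w,\Omega)\mapsto\max_\T|\arg(w\phi'/\phi)|$ is continuous on the closed set where the last two inequalities hold, hence the third inequality is closed as well. The main subtlety — and what I expect to be the one nontrivial step — is the first inequality, which involves $\Ca(\phi)$: continuity of $\phi\mapsto\min_\T|\Omega\bar\phi+\tfrac12\Ca(\phi)\bar\phi|$ requires the real-analyticity of $\Ca$ from Theorem~\ref{thm:cauchy}, which is only valid on the \emph{open} set $\tilde U_1^{k+\beta}$. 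To handle this, I would argue that the closure of $E_\delta^{k+\beta}$ (taken in $X^{k+\beta}\by\R$) stays inside $U_1^{k+\beta}$: any limit point still satisfies the last three inequalities (being closed conditions), so it still lies in $U_3^{k+\beta}$ and hence in $U_1^{k+\beta}$ by Lemma~\ref{lem:u1u3}. Therefore $\Ca(\phi)$ depends continuously on $\phi$ along any convergent sequence in $E_\delta^{k+\beta}$, so the first inequality is preserved in the limit, completing the proof.
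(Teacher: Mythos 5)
Your proposal is correct and follows essentially the same route as the paper's proof: treat the ``moreover'' part by compactness of $\T$, read off boundedness and the inclusion $E_\delta^{k+\beta}\sub U_2^{k+\beta}\cap U_3^{k+\beta}\sub U^{k+\beta}$ via Lemma~\ref{lem:u1u3}, and handle the one delicate point --- closedness of the first inequality --- by showing that limit points of $E_\delta^{k+\beta}$ stay in $U_3^{k+\beta}\sub U_1^{k+\beta}$ so that Theorem~\ref{thm:cauchy} gives continuity of $\phi\mapsto\Omega\bar\phi+\tfrac12\Ca(\phi)\bar\phi$ there. The paper phrases this last step as $\dist_{X^{k+\beta}\by\R}(E_\delta^{k+\beta},\dell U_3^{k+\beta})>0$, but your sequential version is equivalent.
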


As the following lemma shows, solutions in $E_\delta^{1+\alpha}$ are
automatically $C^\infty$, with higher derivatives controlled by
$\delta$. The main ingredient is Lemma~\ref{lem:rh} on the solvability
of Riemann--Hilbert problems. We will prove the analyticity of $\dell
D$ in Section~\ref{sec:n:reg}.
\begin{lemma}[Local compactness of the solution set]\label{lem:extrareg}
  For any $\delta > 0$ and $k \ge 1$, the set $\soln^{1+\alpha} \cap
  E_\delta^{1+\alpha}$ is a compact subset of $C^k(\T) \by \R$. In
  particular, there is a constant $C(k,\delta) > 0$ so that any
  solution $(f,\Omega) \in E_\delta^{1+\alpha} \cap \soln^{1+\alpha}$ satisfies
  \begin{align}
    \label{eqn:uniform}
    \n f_{C^k(\T)} < C.
  \end{align}
  \begin{proof}
    Let $(f,\Omega) \in \soln^{k+\beta}$ for some $k \ge 1$ and $\beta
    \in (0,1)$. As in the proof of Lemma~\ref{lem:fredholm}, we set 
    \begin{align*}
      A = \mathscr A(f,\Omega) 
      = \big( \Omega(\bar w + \bar f) 
      + \tfrac 12 \Ca(w + f) (\bar w + \bar f) \big) w 
    \end{align*}
    and view $\F^{k+\beta}(f,\Omega) = 0$ as the Riemann--Hilbert
    problem $\Im(A\phi') = 0$. Applying Lemmas~\ref{lem:winding} and
    \ref{lem:rh}\ref{lem:rh:fund}, this Riemann--Hilbert problem can
    be explicitly solved to obtain
    \begin{align}
      \label{eqn:iteratethis}
      f'(w) &=
      \exp \left\{
        \frac {w}{2\pi} \int_\T
        \frac 1{\tau-w}
        \left[
          \frac 1\tau\arg\bigg( \frac{A(\tau)}{\bar{A(\tau)}}\bigg)
        -
        \frac 1w\arg\bigg( \frac{A(w)}{\bar{A(w)}}\bigg)
        \right]
        \, d\tau \right\} - 1
        =: \mathscr G(f,\Omega).
    \end{align}
    We first claim that the expression $\mathscr G(f,\Omega)$ on the
    right hand side of \eqref{eqn:iteratethis} defines a continuous
    mapping $\mathscr G \maps  \soln^{k+\beta} \to
    C^{k+\beta}(\T)$. Note that Theorem~\ref{thm:cauchy} guarantees
    that $\mathscr A$ is continuous $ U^{k+\beta} \to
    C^{k+\beta}(\T)$; the quotient $\mathscr A/\bar {\mathscr A}$ is
    continuous between the same spaces thanks to the restriction
    $\abs{\mathscr A}> 0$ embedded in the definition of $
    U^{k+\beta}$, and the same is then true for the argument
    $\arg(\mathscr A/\bar{\mathscr A})$ thanks to the fact that
    $\mathscr A/\bar{\mathscr A}$ has winding number 0 by
    Lemma~\ref{lem:winding}. The Cauchy integral operator appearing in
    \eqref{eqn:iteratethis} is a bounded linear operator from
    $C^{k+\beta}(\T)$ to itself; this is for instance a very special
    case of Theorem~\ref{thm:cauchy}. Composing with the exponential,
    we therefore obtain the desired continuity of $\mathscr G$ and the
    claim is proved.

    Setting $k=1$ and $\beta = \alpha$, we see that any $(f,\Omega) \in
    \soln^{1+\alpha}$ has $f' = \mathscr G(f,\Omega) \in
    C^{1+\alpha}(\T)$ and hence $f \in C^{2+\alpha}(\T)$. Moreover, the
    continuity of $\mathscr G \maps \soln^{1+\alpha} \to
    C^{1+\alpha}(\T)$ implies that the inclusion $\soln
    \hookrightarrow C^{2+\alpha}(\T) \by \R$ is continuous. 
    Iterating this argument with $k = 2$ and so on, we discover
    that $f \in C^{k+\alpha}(\T)$ for all $k$ and that the inclusions
    $\soln^{1+\alpha} \hookrightarrow C^{k+\alpha}(\T) \by \R$ are all
    continuous. 
   
    By Lemma~\ref{lem:Edelta},
    $E_\delta^{1+\alpha}$ is a closed and bounded subset of $
    U^{1+\alpha}$, and hence a compact subset of $ U^{1+\alpha/2}$.
    Since $ \soln^{1+\alpha/2} \sub U^{1+\alpha/2}$ is closed,
    $E_\delta^{1+\alpha} \cap \soln^{1+\alpha/2}$ is also a compact
    subset of $\soln^{1+\alpha/2}$. And since $\soln^{1+\alpha/2}$
    includes continuously into $C^k(\T) \by \R$, we conclude that
    $E_\delta^{1+\alpha} \cap \soln^{1+\alpha}$ is a compact  subset
    of $C^k(\T) \by \R$ as desired. In particular, it is bounded in
    $C^k(\T) \by \R$, which implies \eqref{eqn:uniform}.
  \end{proof}
\end{lemma}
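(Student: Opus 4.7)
The plan is to invert the nonlinear equation $\F^{k+\beta}(f,\Omega)=0$ for $f'$ via the Riemann--Hilbert formula in Lemma~\ref{lem:rh} and then bootstrap. Given $(f,\Omega)\in\soln^{k+\beta}$, I would rewrite the equation as the linear Riemann--Hilbert problem $\Im\{A\phi'\}=0$ with $\phi=w+f$ and coefficient $A=\mathscr A(f,\Omega):=(\Omega\bar\phi+\tfrac12\Ca(\phi)\bar\phi)w$. Since $(f,\Omega)\in U_2^{k+\beta}$ forces $\abs{A}>0$ and Lemma~\ref{lem:winding} ensures $A$ has winding number zero, Lemma~\ref{lem:rh}\ref{lem:rh:fund} applies and delivers an explicit expression
\begin{align*}
  f'(w)=\mathscr G(f,\Omega)(w):=\exp\!\left\{\frac{w}{2\pi}\int_\T\frac{\tau^{-1}\theta(\tau)-w^{-1}\theta(w)}{\tau-w}\,d\tau\right\}-1,
\end{align*}
where $\theta=\arg(A/\bar A)$ with $\theta(1)=0$.

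The crux is then to verify that $\mathscr G$ maps $\soln^{k+\beta}$ continuously into $C^{k+\beta}(\T)$, with no loss of regularity. Theorem~\ref{thm:cauchy} makes $\mathscr A\maps U^{k+\beta}\to C^{k+\beta}(\T)$ continuous; the bound $\abs{A}>0$ built into $U_2^{k+\beta}$ keeps the quotient $A/\bar A$ in $C^{k+\beta}(\T)$; Lemma~\ref{lem:winding} permits a global continuous branch of $\arg$, so $\theta\in C^{k+\beta}(\T)$; and the Cauchy-type integral together with the exponential preserve $C^{k+\beta}$. Starting from $(k,\beta)=(1,\alpha)$, the identity $f'=\mathscr G(f,\Omega)\in C^{1+\alpha}(\T)$ gives $f\in C^{2+\alpha}(\T)$, hence $(f,\Omega)\in\soln^{2+\alpha}$. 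Iterating raises the regularity index indefinitely and produces $(f,\Omega)\in\soln^{k+\alpha}$ for every integer $k$, with each inclusion $\soln^{1+\alpha}\hookrightarrow C^{k+\alpha}(\T)\by\R$ continuous.

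For the compactness conclusion, Lemma~\ref{lem:Edelta} says that $E_\delta^{1+\alpha}$ is closed and bounded in $X^{1+\alpha}\by\R$, so by Arzel\`a--Ascoli it is compact in $X^{1+\alpha/2}\by\R$ and, since $U^{1+\alpha/2}$ is open and $E_\delta^{1+\alpha}\sub U^{1+\alpha/2}$, compact there as well. The solution set $\soln^{1+\alpha/2}$ is closed in $U^{1+\alpha/2}$ because $\F^{1+\alpha/2}$ is continuous by Lemma~\ref{lem:analytic}, so $\soln^{1+\alpha}\cap E_\delta^{1+\alpha}\sub\soln^{1+\alpha/2}\cap E_\delta^{1+\alpha}$ is a compact subset of $\soln^{1+\alpha/2}$. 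Pushing this compact set through the continuous inclusion $\soln^{1+\alpha/2}\hookrightarrow C^k(\T)\by\R$ established in the bootstrap yields compactness in $C^k(\T)\by\R$, and the uniform bound $\n{f}_{C^k(\T)}<C(k,\delta)$ is immediate.

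The main technical obstacle I anticipate is making the continuity of $\theta=\arg(A/\bar A)$ rigorous in the $C^{k+\beta}(\T)$ topology. Globally selecting a continuous branch along sequences in $E_\delta^{1+\alpha}$ requires both the nonvanishing of $A$ and the winding-number-zero property to be stable under the limit; the uniform bound $\abs{A}\ge\delta$ coming from the first inequality in \eqref{eqn:delta}, combined with the topological conclusion of Lemma~\ref{lem:winding}, is exactly what allows $\theta$ (and hence $\mathscr G$) to be handled as a continuous function of $(f,\Omega)$ rather than merely a pointwise one.
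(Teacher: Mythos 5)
Your proposal is correct and follows essentially the same route as the paper's proof: inverting the quasilinear Riemann--Hilbert problem via Lemmas~\ref{lem:winding} and \ref{lem:rh}\ref{lem:rh:fund} to get $f'=\mathscr G(f,\Omega)$, establishing continuity of $\mathscr G$ into $C^{k+\beta}(\T)$ through the same chain (continuity of $\mathscr A$, nonvanishing, winding number zero, boundedness of the Cauchy integral), bootstrapping, and then combining the compactness of $E_\delta^{1+\alpha}$ in the weaker H\"older topology with the continuous inclusions. No substantive differences to report.
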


We are now in a position to apply the following version of
Theorem~9.1.1 in \cite{bt:analytic} as modified in
\cite{csv:critical}. In the abstract setting of
Theorem~\ref{thm:genlocal}, let
\begin{align*}
  \cmg_\loc = \big\{(\tilde x(s),\tilde\lambda(s)) : 0 < s < \varepsilon \big\}
\end{align*}
be the portion of the local bifurcation curve with $s > 0$.
\begin{theorem}[Analytic global bifurcation]\label{thm:genglobal}
  In the setting of Theorem~\ref{thm:genlocal}, suppose in addition
  that
  \begin{enumerate}[label=\rm(\alph*),start=4]
  \item \label{thm:genglobal:fredholm} $\Fg_x(x,\lambda)$ is a Fredholm operator of index zero whenever
    $\Fg(x,\lambda) = 0$; and
  \item \label{thm:genglobal:compact} for some sequence $(Q_j)_{j=1}^\infty$ of bounded closed
    subsets of $U$ with $U = \cup_j Q_j$, the set $\{(x,\lambda) \in U :
    \Fg(x,\lambda) = 0\} \cap Q_j$ is compact for each $j$.
  \end{enumerate}
  Then there exists a continuous curve $\cmg$ of solutions, with
  \begin{align*}
    \cmg_\loc \sub \cmg = \{(\tilde x(s),\tilde \lambda(s)) : 0 < s <
    \infty \} \sub \Fg^{-1}(0)
  \end{align*}
  where $(\tilde x,\tilde \lambda) \maps (0,\infty) \to X \by \R$ is
  continuous, and such that one of the following occurs
  \begin{enumerate}[label=\rm(\roman*)]
  \item there exists $T > 0$ such that, after a reparametrization,
    $(\tilde x(s+T),\tilde \lambda (s+T)) = (\tilde x(s),\tilde
    \lambda (s))$;
  \item \label{thm:genglobal:blowup} for every $j \in \N$ there exists $s_j > 0$ such that
    $(\tilde x(s),\tilde \lambda(s)) \notin Q_j$ for $s > s_j$.
  \end{enumerate}
  Moreover, the curve $\cmg$ has a real-analytic parametrization
  locally around each of its points, and is unique (up to
  reparametrization).
\end{theorem}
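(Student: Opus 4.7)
The plan is to follow the analytic global bifurcation strategy developed by Dancer~\cite{dancer:global} and Buffoni--Toland~\cite{bt:analytic}: construct $\cmg$ by repeatedly extending $\cmg_\loc$ via analytic continuation, exploiting the Fredholm structure of hypothesis~\ref{thm:genglobal:fredholm} to control the local zero set of $\Fg$ near each of its points, and using the compactness hypothesis~\ref{thm:genglobal:compact} to force the stated dichotomy.

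First I would establish the local structure of $\Fg^{-1}(0)$. At any zero $(x_0,\lambda_0)$ with $\Fg_x(x_0,\lambda_0)$ invertible, the analytic implicit function theorem gives a unique local real-analytic curve through $(x_0,\lambda_0)$. At a zero where $\Fg_x(x_0,\lambda_0)$ has nontrivial kernel, the Fredholm index~$0$ assumption permits an analytic Lyapunov--Schmidt reduction: splitting $X = \ker \Fg_x(x_0,\lambda_0) \oplus X_1$ and $Y = Y_0 \oplus \ran \Fg_x(x_0,\lambda_0)$ with associated projections, one solves the range component of $\Fg(x,\lambda) = 0$ analytically for the $X_1$-component and is left with a real-analytic equation on a finite-dimensional space. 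Applying the Weierstrass preparation theorem (and Puiseux expansions) to this reduced equation shows that the zero set near $(x_0,\lambda_0)$ is a finite union of real-analytic arcs meeting at the point. This is precisely the input that allows purely analytic, rather than merely $C^1$, continuation.

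Next I would glue these local pieces. Starting from $\cmg_\loc$, I define a maximal analytic extension by Zorn's lemma: a chain of extensions of $\cmg_\loc$ admits an upper bound given by their union, so a maximal extension $\cmg$ parametrized by some interval $(0,s_*)$ exists. At each point along $\cmg$, the local description yields an unambiguous continuation at regular points and a finite choice of arcs at singular points, with the arc being selected by compatibility with the incoming direction. The real-analytic parametrization locally around each point, as well as uniqueness up to reparametrization, are immediate from this construction, following the template of~\cite[Section~9.1]{bt:analytic}.

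Finally I would establish the dichotomy. Suppose $\cmg$ does \emph{not} satisfy alternative~\ref{thm:genglobal:blowup}, so for some $Q_j$ there is a sequence $s_n \to s_*$ with $(\tilde x(s_n),\tilde\lambda(s_n)) \in Q_j$. By hypothesis~\ref{thm:genglobal:compact} a subsequence converges to a zero $(x_*,\lambda_*)$, and extending $\cmg$ past $(x_*,\lambda_*)$ by the local analytic structure would contradict maximality unless the extension runs back into a previously visited portion of $\cmg$; analyticity then forces the continuation to coincide with that earlier portion, which after reparametrization yields the closed-loop alternative~(i). The main obstacle throughout is the careful treatment of bifurcation points along $\cmg$, where several analytic arcs of zeros can cross: one must check that the finitely many arcs produced by Lyapunov--Schmidt can always be matched consistently across each crossing so that $\cmg$ is genuinely a continuous curve, inherits a real-analytic parametrization, and is unique up to reparametrization. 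Securing this matching is exactly the role of the real-analytic (as opposed to merely smooth) hypothesis, since it is the Weierstrass/Puiseux structure that rules out pathological accumulation of branches.
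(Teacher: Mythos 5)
The paper does not prove this theorem: it is quoted verbatim as a black box, namely Theorem~9.1.1 of Buffoni--Toland \cite{bt:analytic} in the two-alternative form given in \cite{csv:critical}, and the authors only verify its hypotheses for their operator $\F$. So there is no in-paper proof to compare against; what you have written is an outline of the known Dancer/Buffoni--Toland argument, and at that level it identifies the right ingredients (analytic Lyapunov--Schmidt reduction at solutions using the Fredholm index-zero hypothesis, local analysis of the reduced finite-dimensional real-analytic equation, continuation to a maximal curve, and the compactness hypothesis to force the dichotomy).

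As a proof, however, the sketch has gaps at precisely the hard points. First, the claim that Weierstrass preparation and Puiseux expansions show the local zero set is ``a finite union of real-analytic arcs meeting at the point'' is not correct as stated: the reduced real-analytic variety can contain components of dimension two or more (or the reduced map can vanish identically near the point), and nothing in hypotheses \ref{thm:genglobal:fredholm}--\ref{thm:genglobal:compact} excludes this a priori. Buffoni--Toland handle this with their theory of \emph{distinguished arcs} and the ``route of maximal length'' construction (Chapters~7--9 of \cite{bt:analytic}), which is substantially more delicate than a Zorn's-lemma maximal extension; in particular the consistent selection of the outgoing arc at a crossing, which you correctly flag as the main obstacle, is the technical heart of the theorem and is what delivers both the local real-analytic reparametrization and the uniqueness claim, so it cannot be left as an acknowledged difficulty. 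Second, your dichotomy argument conflates two different statements: Buffoni--Toland's original theorem concludes with a trichotomy (blow-up of the norm, approach to $\partial U$ or loss of compactness, or a closed loop), and the two-alternative form stated here is the consolidation carried out in \cite{csv:critical} using the exhaustion $U=\cup_j Q_j$; your sketch should either reproduce that reduction or derive the three alternatives directly. If you intend to cite the result, as the paper does, none of this is needed; if you intend to prove it, these are the steps that must be supplied.
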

Applying Theorem~\ref{thm:genglobal} to our problem we obtain the following
intermediate result.
\begin{theorem}[Global bifurcation of vortex patches]\label{thm:global}
  There exists a continuous curve $\cm$ of solutions which extends
  $\cm_\loc$,
  \begin{align*}
    \cm_\loc \sub \cm 
    = \{(\tilde f(s),\tilde \Omega(s)) : 0 < s < \infty \}
    \sub \soln
  \end{align*}
  and such that either
  \begin{enumerate}[label=\rm(\roman*)]
  \item\label{thm:global:loop} there exists $T > 0$ such that, after a reparametrization,
    $(\tilde f(s+T),\tilde \Omega(s+T)) = (\tilde f(s),\tilde
    \Omega(s))$; or
  \item\label{thm:global:blowup} as $s \to \infty$, 
    \begin{align}
      \label{eqn:global:blowup}
      \min\left\{
        \min_\T 
      \abs[\big]{\Omega\bar{ \tilde\phi } + \tfrac 12 \Ca(\phi) \bar{\tilde\phi}}
        ,\,
      \frac 1{1+\abs\Omega+\n{\tilde \phi}_{C^{1+\alpha}}},\,
      \frac \pi 2 - \max_\T \left| \arg \frac{w\tilde \phi'}{\tilde\phi} \right|,\,
      \min_\T \abs{\tilde\phi'},\,
      \min_\T \abs{\tilde\phi} \right\} \longrightarrow 0,
    \end{align}
    where here $\tilde \phi(s) = w + \tilde f(s)$.
  \end{enumerate}
  Moreover, the curve $\cm$ has a real-analytic parametrization
  locally around each of its points, and is unique (up to
  reparametrization).
  \begin{proof}
    We apply Theorem~\ref{thm:genglobal}. By Theorem~\ref{thm:locbif},
    $\F \maps U \to Y$ satisfies the hypotheses of
    Theorem~\ref{thm:genlocal}, and by Lemma~\ref{lem:fredholm} $\F$
    satisfies the Fredholm index zero
    assumption~\ref{thm:genglobal:fredholm}. Setting $Q_j =
    E_{1/j}^{3+\alpha}$, Lemmas~\ref{lem:Edelta} and
    \ref{lem:extrareg} guarantee that the remaining assumption
    \ref{thm:genglobal:compact} is satisfied. With this choice of
    $Q_j$, alternative \ref{thm:global:blowup} above is a restatement
    of alternative \ref{thm:genglobal:blowup} in
    Theorem~\ref{thm:genglobal}, except that the H\"older exponent has
    been reduced from $3+\alpha$ to $1+\alpha$ thanks to
    Lemma~\ref{lem:extrareg}.
  \end{proof}
\end{theorem}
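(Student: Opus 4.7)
The plan is to apply the abstract analytic global bifurcation theorem (Theorem~\ref{thm:genglobal}) with $\Fg = \F$, $X = X^{3+\alpha}$, $Y = Y^{2+\alpha}$, parameter $\lambda = \Omega$, and bifurcation point $\lambda_0 = \Omega_m$. The local bifurcation hypotheses \textup{(a)}--\textup{(c)} of Theorem~\ref{thm:genlocal} have already been verified in Theorem~\ref{thm:locbif}\ref{thm:locbif:bif} at $\Omega_m$ (taking $n=1$), producing exactly the local curve $\cm_\loc$ of Definition~\ref{def:loc}. So the work to be done is to verify the two extra hypotheses \ref{thm:genglobal:fredholm} and \ref{thm:genglobal:compact} of Theorem~\ref{thm:genglobal} and then to translate the abstract alternatives into the concrete dichotomy \ref{thm:global:loop}--\ref{thm:global:blowup}.

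For \ref{thm:genglobal:fredholm}, Lemma~\ref{lem:fredholm} already shows that $\F_f(f,\Omega)\maps X \to Y$ is Fredholm of index zero at every $(f,\Omega) \in \soln$ (and in fact in an open neighborhood, using that the inversion of the Riemann--Hilbert piece $L_1$ and the compactness of $L_2$ are both stable under small analytic perturbations). For \ref{thm:genglobal:compact}, I would take the exhaustion $Q_j := E_{1/j}^{3+\alpha}$ supplied by Lemma~\ref{lem:Edelta}. That lemma gives that each $Q_j$ is a bounded closed subset of $U$, and that $U = \bigcup_j Q_j$; Lemma~\ref{lem:extrareg} (applied with $k = 3$) then shows that $\soln \cap Q_j = \soln^{3+\alpha} \cap E_{1/j}^{3+\alpha}$ is compact, as needed.

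Once Theorem~\ref{thm:genglobal} applies, it produces the continuous curve $\cm = \{(\tilde f(s),\tilde\Omega(s)) : 0 < s < \infty\}$ extending $\cm_\loc$, together with local real-analytic reparametrizations and uniqueness. Alternative \ref{thm:global:loop} is just the closed-loop alternative of Theorem~\ref{thm:genglobal} restated. The main point requiring attention is alternative \ref{thm:global:blowup}: Theorem~\ref{thm:genglobal} only gives that the curve eventually leaves every $Q_j = E_{1/j}^{3+\alpha}$, which a priori could happen through failure of any of the five quantities in \eqref{eqn:delta} (computed in the $C^{3+\alpha}$ norm). To reduce the exponent $3+\alpha$ to the $1+\alpha$ stated in \eqref{eqn:global:blowup}, I would invoke Lemma~\ref{lem:extrareg}: the uniform estimate \eqref{eqn:uniform} says that on the solution set the $C^{3+\alpha}$ bound is controlled by a function of $\delta$ and of the four other geometric quantities appearing in \eqref{eqn:delta}. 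Consequently the $C^{3+\alpha}$ norm bound in the definition of $E_\delta^{3+\alpha}$ is never the effective constraint along $\soln$, and leaving every $E_{1/j}^{3+\alpha}$ is equivalent to the minimum in \eqref{eqn:global:blowup} tending to zero.

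The main obstacle in this plan is the compactness step: the abstract theorem demands compactness of the zero set intersected with each $Q_j$, but the natural closed and bounded sets defined by the conditions \eqref{eqn:delta} only give compactness in a strictly weaker H\"older norm. This is exactly where the Riemann--Hilbert inversion of Lemma~\ref{lem:rh} and the analyticity of the Cauchy integral (Theorem~\ref{thm:cauchy}) do the heavy lifting inside Lemma~\ref{lem:extrareg}, by writing $f' = \mathscr G(f,\Omega)$ and bootstrapping regularity. That same bootstrap is what ultimately allows the conclusion \eqref{eqn:global:blowup} to be phrased in the weaker $C^{1+\alpha}$ norm, which will be essential when the nodal analysis of Section~\ref{sec:nodal} rules out blowup of the $C^{1+\alpha}$ norm along $\cm$ and extracts the physically meaningful alternative $\min_{\dell D}\dell_r\Psi \to 0$.
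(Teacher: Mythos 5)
Your proposal is correct and follows essentially the same route as the paper: apply Theorem~\ref{thm:genglobal} with the Fredholm hypothesis supplied by Lemma~\ref{lem:fredholm}, the exhaustion $Q_j = E_{1/j}^{3+\alpha}$ with compactness from Lemmas~\ref{lem:Edelta} and \ref{lem:extrareg}, and the downgrade of the H\"older exponent from $3+\alpha$ to $1+\alpha$ in the blow-up alternative via the bootstrap estimate \eqref{eqn:uniform}. Your added remarks on why the regularity gain from the Riemann--Hilbert inversion is the key to both the compactness step and the exponent reduction match the paper's reasoning.
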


\section{Streamlines}\label{sec:nodal}

In this section we study the level sets of the relative stream
function $\Psi$, including of course the boundary $\dell D$ of the
vortex patch. These curves, called streamlines, represent fluid
particle trajectories in the rotating frame. In addition to their
independent interest, the results in this section will allow us to
eliminate the alternative \ref{thm:global:loop} in
Theorem~\ref{thm:global} that the global curve $\cm$ of vortex patches
forms a closed loop. 

For a solution $(\phi-w,\Omega) \in \soln$, the
symmetries $\phi(e^{2\pi i/m}w) = e^{2\pi i/m}\phi(w)$ and $\phi(\bar
w) = \bar{\phi(w)}$ imply that the solution $\Psi$ of
\eqref{eqn:Psi:lap}--\eqref{eqn:Psi:reg} is $2\pi/m$-periodic and even
in the polar coordinate $\theta$. Thus it is enough to describe $\Psi$
on the fundamental sector $S$ with left and right boundary portions
$L,R$ given in polar coordinates by
\begin{align}
  \label{eqn:SLR}
  S = \big\{ re^{i\theta} : r > 0,\, 0 < \theta < \pi/m \big\},
  \quad 
  L = \big\{  r > 0,\, \theta = \pi/m \big\},
  \quad 
  R = \big\{  r > 0,\, \theta = 0 \big\}.
\end{align}
See Figure~\ref{fig:phase}a for an illustration.
\begin{figure}
  \centering
  \includegraphics[scale=1.1]{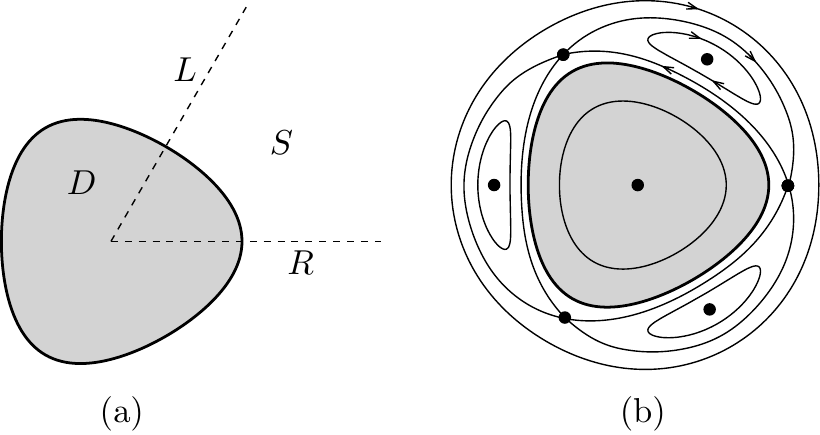}  
  \caption{(a) The sector $S$ and rays $L,R$ in \eqref{eqn:SLR}. (b)
  Level curves and critical points of $\Psi$.}
  \label{fig:phase}
\end{figure}
We will show in Section~\ref{sec:nodal:glob} that every
vortex patch in $\cm$ satisfies
\begin{subequations}\label{eqn:n:glob}
  \begin{alignat}{2}
    \label{eqn:n:r}
    \Psi_r              & > 0 && \ona \dell D, \\ 
    \label{eqn:n:t}
    \Psi_\theta         & > 0 && \ona S,   \\ 
    \label{eqn:n:tt:r}
    \Psi_{\theta\theta} & > 0 && \ona R,   \\ 
    \label{eqn:n:tt:l}
    \Psi_{\theta\theta} & < 0 && \ona L.
  \end{alignat}
\end{subequations}
With $r=\tilde r(\theta)$ a polar parametrization of the boundary $\dell D$
of the vortex patch, \eqref{eqn:n:glob} and the identity
$\Psi(\tilde r(\theta),\theta) = 0$ together imply the inequalities
\begin{align*}
    \tilde r'(\theta) < 0 \fora 0 < \theta < \frac \pi m,
    \quad 
    \tilde r''(0) < 0,
    \quad 
    \tilde r''\Big(\frac \pi m\Big) > 0
\end{align*}
claimed in Theorem~\ref{thm:informal}\ref{thm:informal:nodal}.

\begin{definition}[Nodal set]
  We define the ``nodal set'' $\nodal$ to be the subset of $\soln
  \without \triv$ where \eqref{eqn:n:glob} holds. 
\end{definition}

In Section~\ref{sec:nodal:loc}, we will show that the vortex patches
along the local curve $\cm_\loc$ not only satisfy \eqref{eqn:n:glob}
but also satisfy the additional inequalities
\begin{subequations}\label{eqn:n:loc}
  \begin{alignat}{2}
    \label{eqn:n:rt}
    \Psi^-_{r\theta}      & < 0 && \ona S \without D,   \\ 
    \label{eqn:n:rr}
    (r\dell_r)^2 \Psi^-   & < 0 && \ona \C \without D,   \\ 
    \label{eqn:n:rtt:r}
    \Psi^-_{r\theta\theta}& < 0 && \ona R \without D, \\
    \label{eqn:n:rtt:l}
    \Psi^-_{r\theta\theta}& > 0 && \ona L \without D,
  \end{alignat}
\end{subequations}
which imply that the contour plot of $\Psi$ on $\C \without D$ looks
qualitatively like Figure~\ref{fig:phase}b; see
Theorem~\ref{thm:phase}.

\subsection{Preliminary lemmas}
First we prove two simple lemmas which will be useful for both the
local and global arguments to follow. 
\begin{lemma}[Robustness of simple roots]\label{lem:elemcont}
  Consider the Banach space
  \begin{align*}
    Z = \big\{ g \in C^1([0,1],\R) : g(0) = 0\big\} 
  \end{align*}
  of $C^1$ functions vanishing at $0$, and the subset
  \begin{align*}
    V = \big\{ g \in Z : 
    g(t) > 0 \fora t > 0
    ,\,
    g'(0) > 0\big\}
  \end{align*}
  of functions which are positive away from $0$ and have a simple
  root. Then
  \begin{enumerate}[label=\rm(\alph*)]
  \item \label{lem:elemcont:open} $V \sub Z$ is open.
  \item \label{lem:elemcont:diff} If $G \maps (-1,1) \to Z$ is
    a $C^1$ map with $G(0) = 0$ and $G'(0) \in V$, then $G(s) \in V$
    for $s > 0$ sufficiently small.
  \end{enumerate}
  \begin{proof}
    First we prove \ref{lem:elemcont:open}. Let $g_0 \in V$ and let $g
    \in Z$ have $\n{g-g_0}_{C^1} < \varepsilon$ for some $\varepsilon
    > 0$ to be determined. From $g_0 \in V$ we deduce that $\dell_t
    g_0 > 0 \ona [0,\delta]$ and $g_0 > 0 \ona [\delta,1]$ for some
    $\delta > 0$. As the above intervals are compact, we can choose
    $\varepsilon$ small enough that the same strict inequalities hold
    for $g$. The remaining inequality $g(t) > 0$ for $t \in
    (0,\delta)$ then follows from the mean value theorem.
    From the differentiability of $G$ we have $\n{s^{-1}G(s) -
    G'(0)}_Z \to 0$ as $s \searrow 0$, and so $s^{-1} G(s) \in V$ for
    $s$ sufficiently small. Since $V$ is invariant under
    multiplication with positive scalars, this proves
    \ref{lem:elemcont:diff}.
  \end{proof}
\end{lemma}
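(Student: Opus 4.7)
The plan is to reduce both parts to two elementary observations about functions in $V$: they are bounded away from zero on any compact subset of $(0,1]$, and they have strictly positive derivative on some initial interval $[0,\delta]$ by continuity of the derivative at the simple root. Both properties are stable under small $C^1$ perturbations, so $V$ is open, and the differentiability hypothesis in (b) will let us reduce that part to (a) by a scaling argument.

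For part \ref{lem:elemcont:open}, I would fix $g_0 \in V$. Since $g_0'(0) > 0$ and $g_0'$ is continuous, there exists $\delta \in (0,1)$ and $c > 0$ such that $g_0' \ge c \ona [0,\delta]$. On the compact complement $[\delta,1]$, continuity and positivity of $g_0$ furnish a lower bound $g_0 \ge c' > 0$. For any $g \in Z$ with $\n{g - g_0}_{C^1} < \varepsilon$ and $\varepsilon$ small relative to $\min(c,c')$, the same strict inequalities $g' > 0 \ona [0,\delta]$ and $g > 0 \ona [\delta,1]$ persist. The remaining positivity $g(t) > 0$ for $t \in (0,\delta)$ then follows from $g(0) = 0$ and the mean value theorem applied on $[0,t]$.

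For part \ref{lem:elemcont:diff}, the idea is to rescale. The hypotheses $G(0)=0$ and the differentiability of $G$ at $0$ translate into
\begin{align*}
\n{s^{-1} G(s) - G'(0)}_Z \longrightarrow 0 \asa s \searrow 0.
\end{align*}
Since $G'(0) \in V$ and $V \sub Z$ is open by part \ref{lem:elemcont:open}, we obtain $s^{-1} G(s) \in V$ for all sufficiently small $s > 0$. The set $V$ is clearly invariant under multiplication by positive scalars (the defining conditions $g(t) > 0$ and $g'(0) > 0$ both scale positively), and so $G(s) = s \cdot (s^{-1} G(s)) \in V$ as desired.

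I do not anticipate any real obstacle here; the only point requiring minor care is organizing the $\varepsilon$-$\delta$ bookkeeping in \ref{lem:elemcont:open} so that the perturbation size depends only on the structure of $g_0$ near its root and on a uniform positive lower bound on $[\delta,1]$, which is automatic by compactness.
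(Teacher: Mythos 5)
Your argument is correct and follows the paper's proof essentially verbatim: the same compactness splitting into $[0,\delta]$ and $[\delta,1]$ plus the mean value theorem for part (a), and the same rescaling $s^{-1}G(s)\to G'(0)$ combined with positive-scalar invariance of $V$ for part (b). No gaps.
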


\begin{lemma}[Weaker streamline conditions]\label{lem:nodweak}
  For a solution in $\soln \without \triv$, the conditions
  \eqref{eqn:n:glob} are equivalent to the weaker conditions
  \begin{subequations}\label{eqn:nw}
    \begin{alignat}{2}
      \label{eqn:nw:r}
      \Psi_r              & > 0 && \ona \dell D, \\ 
      \label{eqn:nw:t}
      \Psi_\theta         & > 0 && \ona \dell D,   \\ 
      \label{eqn:nw:tt:r}
      \Psi_{\theta\theta} & > 0 && \ona R \cap \dell D,   \\ 
      \label{eqn:nw:tt:l}
      \Psi_{\theta\theta} & < 0 && \ona L \cap \dell D.
    \end{alignat}
  \end{subequations}
  \begin{proof}
    Suppose that \eqref{eqn:nw} holds. Then \eqref{eqn:n:r} since it
    is just \eqref{eqn:nw:r}, and so it suffices to prove
    \eqref{eqn:n:t}--\eqref{eqn:n:tt:l}.
    To show \eqref{eqn:n:t}, we use the maximum principle. Differentiating \eqref{eqn:Psi:lap} shows that $\Psi_\theta$ is
    harmonic on both $S \cap D$ and $S \without D$, is continuous
    across $\dell D$, and vanishes at infinity. By symmetry, we also
    have $\Psi_\theta = 0$ on $\dell S$. Thus if $\inf_S \Psi_\theta <
    0$, by the maximum principle this infimum would have to be
    achieved on $S \cap \dell D$, a contradiction. Therefore $\inf_S
    \Psi_\theta = 0$, and so the strong maximum principle implies
    $\Psi_\theta > 0$ on $S$. Applying the Hopf lemma on $R$ and $L$,
    we see that \eqref{eqn:n:tt:r} and \eqref{eqn:n:tt:l} hold
    except potentially on $R \cap \dell D$ and $L \cap \dell D$. But
    there the inequalities follow from \eqref{eqn:nw:tt:r} and
    \eqref{eqn:nw:tt:l}, and so \eqref{eqn:n:glob} holds as desired.
  \end{proof}
\end{lemma}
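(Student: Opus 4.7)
The forward direction \eqref{eqn:n:glob} $\Rightarrow$ \eqref{eqn:nw} is immediate since each of the weaker conditions is the restriction of the corresponding strong one to a smaller set. For the converse, my plan is to propagate the pointwise boundary information into the sector $S$ using the maximum principle and Hopf's lemma applied to $\Psi_\theta$.

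The first step is to observe that $\Psi_\theta$ is harmonic on each of $S \cap D$ and $S \setminus D$: differentiating $-\Delta\Psi = 1_D - 2\Omega$ in $\theta$ annihilates the right-hand side away from $\dell D$, and $\Psi \in C^1(\C)$ ensures $\Psi_\theta$ is continuous across $\dell D$. Next I would collect the boundary data of $\Psi_\theta$ on $\dell(S\cap D)$ and $\dell(S\setminus D)$. On the rays $R$ and $L$, the reflection and $m$-fold rotation symmetries of $\phi$ combine to make $\Psi$ reflection-symmetric across each ray, so $\Psi_\theta \equiv 0$ there; at the origin, $C^1$ regularity forces $\Psi_\theta(0) = 0$; at infinity, since $\tfrac12\Omega|z|^2$ is radial one has $\Psi_\theta = \psi_\theta$ for the non-relative stream function $\psi$, and the decay in \eqref{eqn:Psi:asym} yields $\Psi_\theta \to 0$; and on $S \cap \dell D$, hypothesis \eqref{eqn:nw:t} supplies $\Psi_\theta > 0$.

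With these boundary values in place, I would run the maximum principle on each of $\overline{S \cap D}$ and $\overline{S \setminus D}$. If $\Psi_\theta$ were negative somewhere in $S$, its infimum over $\overline{S}$ would have to be attained on one of these boundary pieces, each of which gives $\Psi_\theta \ge 0$, a contradiction. Hence $\Psi_\theta \ge 0$ on $S$, and the strong maximum principle (applied separately on each side of $\dell D$) upgrades this to $\Psi_\theta > 0$ throughout $S$, giving \eqref{eqn:n:t}. The second-derivative conditions \eqref{eqn:n:tt:r} and \eqref{eqn:n:tt:l} then follow from Hopf's lemma: at any $p \in R \setminus \dell D$, $\Psi_\theta$ is harmonic on one side of $R$ with $\Psi_\theta(p) = 0$ and $\Psi_\theta > 0$ just inside $S$, so $\Psi_{\theta\theta}(p) > 0$; the analogous one-sided argument on $L$ gives $\Psi_{\theta\theta} < 0$, and the remaining points of $R \cap \dell D$ and $L \cap \dell D$ are covered by \eqref{eqn:nw:tt:r} and \eqref{eqn:nw:tt:l}. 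The main point requiring care is that $\Psi_\theta$ is only continuous across $\dell D$, not $C^1$; this is accommodated by applying both the maximum principle and Hopf's lemma separately on the two open harmonic regions, whose shared boundary carries $\Psi_\theta > 0$ by hypothesis.
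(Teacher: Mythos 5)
Your proposal is correct and follows essentially the same route as the paper: harmonicity of $\Psi_\theta$ on $S\cap D$ and $S\setminus D$, vanishing on $\dell S$ and at infinity, the maximum principle to force $\Psi_\theta>0$ in $S$, and the Hopf lemma on $R$ and $L$ away from $\dell D$ with the hypotheses \eqref{eqn:nw:tt:r}--\eqref{eqn:nw:tt:l} covering the remaining boundary points. The only difference is that you spell out the boundary data and the one-sided applications of the maximum principle in slightly more detail than the paper does.
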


\subsection{Streamlines of small solutions}\label{sec:nodal:loc}

In this section we prove that both \eqref{eqn:n:glob} and
\eqref{eqn:n:loc} hold along $\cm_\loc$. 

We begin by using Lemma~\ref{lem:stream} to 
translate the expansion for the conformal
mapping $\phi$ in Theorem~\ref{thm:locbif}\ref{thm:locbif:bif} into
expansions for the derivatives of $\Psi$ restricted $\dell D$.

\begin{lemma}[Expansions for derivatives of $\Psi$]\label{lem:Psir}
  Along the local curve $\cm_\loc$ of vortex patches $(\tilde
  f(s),\tilde \Omega(s))$, the partial derivatives of the
  corresponding relative stream functions
  $\Psi^-=\Psi|_{\C \without D}$ have the
  following expansions:
  \begin{align}
    \label{eqn:Psirexp}
    \begin{aligned}
      \dell_\theta \Psi^-(\phi(w);s)
      &= s \tfrac 1 2 \Im w^m + O(s^2),
      \\
      r\dell_r \Psi^- (\phi(w);s)
      & = \tfrac 1{2m}  + O(s),
      \\
      \dell_\theta r\dell_r \Psi^-(\phi(w);s)
      &= -s \tfrac m2 \Im w^m + O(s^2),
      \\
      (r\dell_r )^2\Psi^- (\phi(w);s)
      & = -\tfrac{m-1}m  
      + O(s),\\
      \dell_\theta^2 \Psi^- (\phi(w);s)
      &= s \tfrac m 2 \Re w^m + O(s^2),
      \\
      \dell_\theta^2 r\dell_r \Psi^-(\phi(w);s)
      &= -s \tfrac{m^2}2 \Re w^m + O(s^2).
    \end{aligned}
  \end{align}
  \begin{proof}
    By Theorem~\ref{thm:locbif}\ref{thm:locbif:bif},
    we have the asymptotic expansions
    \begin{align}
      \label{eqn:phis}
      \phi(w;s) := w + \tilde f(s)(w) = 
      w + \frac s{w^{m-1}} + O(s^2),
      \qquad 
      \tilde\Omega(s) = \frac{m-1}{2m} + O(s^2)
    \end{align}
    in $C^{3+\alpha}(\T)$ and $\R$ respectively. From
    Lemma~\ref{lem:stream} and the analyticity of the Cauchy operator,
    we know that the compositions $\dell_z^k\Psi^- \circ \phi$ depend
    analytically on $s$ as elements of $C^{4-k+\alpha}(\T)$ for
    $k=1,2,3$. Inserting the expansion \eqref{eqn:phis} into
    \eqref{eqn:Psizw} and repeatedly using the calculus of residues as
    in the proof of Lemma~\ref{lem:Fdiff}, we eventually find
    \begin{align}
      \label{eqn:Psizwexp}
      \begin{aligned}
        \dell_z\Psi^-(\phi(w;s);s) 
        &= \frac 1{4 w} - \frac {m-1}{4m} \bar w
        - s\frac{m-1}{2m} \left( \frac{\bar w}{w^{m-1}} 
        + \frac w{\bar w^{m-1}} \right)
        + O(s^2),
        \\
        \dell_z^2 \Psi^-(\phi(w;s);s) 
        &= -\frac 1{4w^2}
        - s\frac{m-1}2 \frac 1{w^{m+2}} + O(s^2),\\
        \dell^3_z \Psi^-(\phi(w;s);s) 
        &= \frac 12 \frac 1{w^3}
        + s \frac{(m-1)(m+4)}4 \frac 1{w^{m+3}} + O(s^2). 
      \end{aligned}
    \end{align}
    Using the convenient formulas $r\dell_r = z\dell_z + \bar z
    \dell_{\bar z}$ and $\dell_\theta = i(z\dell_z - \bar z
    \dell_{\bar z})$, the partials of $\Psi^-$ can then
    be easily calculated in terms of
    $\dell_z\Psi^-,\dell^2_z\Psi^-,\dell^3_z\Psi^-$:
    \begin{align}
      \label{eqn:Psiz}
      \begin{aligned}
        \dell_\theta \Psi^-(z) 
        &= -2\Im[z\dell_z \Psi^-],
        \\
        r\dell_r \Psi^-(z) 
        &= 
        2 \Re \left[ z\dell_z \Psi^- \right] ,
        \\
        \dell_\theta r\dell_r \Psi^-(z)
        &= 
        -2 \Im \left[ z\dell_z \Psi^- + z^2 \dell_z^2 \Psi^- \right],
        \\
        (r\dell_r )^2\Psi^-(z) 
        &= 
        2\Re \left[ z \dell_z \Psi^- + z^2 \dell_z^2 \Psi^- \right] 
        - \Omega \abs z^2,\\
        \dell_\theta^2 \Psi^-(z) 
        &= -2\Re \left[ z\dell_z\Psi^- + z^2\dell_z^2 \Psi^- \right]
        - \Omega \abs z^2,
        \\
        \dell_\theta^2 r\dell_r \Psi^-(z)
        &=
        -2\Re \left[z\dell_z \Psi^- 
        + 3z^2 \dell_z^2 \Psi^-
        + z^3 \dell_z^3 \Psi^-\right]
        - \Omega \abs z^2,
      \end{aligned}
    \end{align}
  where here we repeatedly have used the identity $\dell_z \dell_{\bar
  z} \Psi^- = \dell_{\bar z} \dell_z \Psi^- = \tfrac 14 \Delta \Psi^- =
  -\tfrac\Omega 2$.
  Plugging \eqref{eqn:Psizwexp} into \eqref{eqn:Psiz}, and using the
  identity $\bar w = 1/w$ for $w \in \T$, we finally obtain
  \eqref{eqn:Psirexp} as desired.
\end{proof}
\end{lemma}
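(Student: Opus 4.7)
My plan is to obtain the expansions by directly substituting the small-$s$ expansion of $\phi$ (from Theorem~\ref{thm:locbif}\ref{thm:locbif:bif}) into the representation of the Wirtinger derivatives of $\Psi^-$ given in Lemma~\ref{lem:stream}, then converting the resulting asymptotic formulas for $\dell_z^k \Psi^- \circ \phi$ into formulas for the polar derivatives by purely algebraic manipulations.

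First I would record the input expansions
\begin{align*}
  \phi(w;s) = w + \frac{s}{w^{m-1}} + O(s^2),
  \qquad
  \phi'(w;s) = 1 - (m-1)\frac{s}{w^m} + O(s^2),
  \qquad
  \tilde\Omega(s) = \frac{m-1}{2m} + O(s^2),
\end{align*}
valid in $C^{3+\alpha}(\T)$ and $\R$ respectively, noting that Theorem~\ref{thm:cauchy} together with Lemma~\ref{lem:stream} guarantees the compositions $\dell_z^k \Psi^- \circ \phi$ depend real-analytically on $s$ in $C^{4-k+\alpha}(\T)$ for $k=1,2,3$, so the Taylor remainders are legitimate in these norms.

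Next I would insert these expansions into the three identities of \eqref{eqn:Psizw} and evaluate the Cauchy integrals $\Ca(\phi)\bar\phi$, $\Ca(\phi) F_2(\phi)$, $\Ca(\phi) F_3(\phi)$ order-by-order in $s$. At $s=0$ one recovers explicit integrals over $\T$ which are handled by residues using $\bar\tau = 1/\tau$ (as in the proof of Lemma~\ref{lem:Fdiff}), giving the background contributions corresponding to the rigidly rotating unit disk. The first-order-in-$s$ correction reduces to a contour integral involving $\tau^{-(m-1)}$ and differences $(\tau^{-1}-w^{-1})/(\tau-w)$ which again collapse under residue calculus. The outcome is the expansion \eqref{eqn:Psizwexp}, and since this is routine bookkeeping I would only carry it out carefully once and quote the other two cases.

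Finally I would use the operator identities
\begin{align*}
  r\dell_r = z\dell_z + \bar z\dell_{\bar z},
  \qquad
  \dell_\theta = i(z\dell_z - \bar z\dell_{\bar z}),
\end{align*}
together with $\dell_z\dell_{\bar z}\Psi^- = \tfrac14\Delta\Psi^- = -\tfrac\Omega 2$ (which eliminates all mixed $z,\bar z$ partials in favor of pure $\dell_z^k\Psi^-$), to express each of the six polar quantities in \eqref{eqn:Psirexp} in the algebraic form \eqref{eqn:Psiz}. Substituting \eqref{eqn:Psizwexp} into \eqref{eqn:Psiz} and simplifying with $\bar w = 1/w$ on $\T$ yields \eqref{eqn:Psirexp} after collecting real and imaginary parts.

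The main obstacle is the bookkeeping in step two: the leading-order Cauchy integrals for $\Ca(\phi)F_2(\phi)$ and $\Ca(\phi)F_3(\phi)$ contain several spurious-looking singular terms (for instance $\bar{\phi''}/w^4(\phi')^2$ in $F_3$), and one must verify that after including the $O(s)$ perturbation of $\phi$ the poles outside $\D$ contribute nothing, so that only finitely many residues at $\tau=w$ and $\tau=0$ remain. Once these cancellations are tracked, the remaining conversion to polar form is purely algebraic.
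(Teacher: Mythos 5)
Your proposal follows the paper's proof essentially verbatim: expand $\phi$ via Theorem~\ref{thm:locbif}\ref{thm:locbif:bif}, justify the $O(s^2)$ remainders by the analyticity of $s\mapsto\dell_z^k\Psi^-\circ\phi$ coming from Lemma~\ref{lem:stream} and Theorem~\ref{thm:cauchy}, compute \eqref{eqn:Psizwexp} by residues, and convert to polar derivatives via $r\dell_r=z\dell_z+\bar z\dell_{\bar z}$, $\dell_\theta=i(z\dell_z-\bar z\dell_{\bar z})$, and $\dell_z\dell_{\bar z}\Psi^-=-\tfrac\Omega2$. The approach and all the key ingredients are correct and identical to the paper's.
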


We can now establish \eqref{eqn:n:glob} and \eqref{eqn:n:loc} by using
the expansions in Lemma~\ref{lem:Psir},  Lemma~\ref{lem:elemcont}, and
several non-obvious maximum principle arguments.

\begin{proposition}[Signs of derivatives of $\Psi$]\label{prop:Psisign}
  The inequalities
  \eqref{eqn:n:glob} and \eqref{eqn:n:loc} hold along $\cm_\loc$,
  after possibly reducing $\varepsilon$ in Definition~\ref{def:loc}.
  \begin{proof}
    By Lemma~\ref{lem:nodweak}, to prove \eqref{eqn:n:glob} it
    suffices to prove \eqref{eqn:nw}. The first inequality
    \eqref{eqn:nw:r} as well as the last two
    \eqref{eqn:nw:tt:r}--\eqref{eqn:nw:tt:l} follow
    immediately from the expansion \eqref{eqn:Psirexp} in
    Lemma~\ref{lem:Psir}.

    Thus in order to prove \eqref{eqn:n:glob} it remains only to show that
    \eqref{eqn:nw:t} holds. For this, we use the polar coordinate
    representation
    \begin{align*}
      \phi(e^{it}) = \rho(t) e^{i\vartheta(t)}
    \end{align*}
    introduced in Lemma~\ref{lem:polar}, where we are temporarily
    suppressing dependence on $s$.  The symmetries \eqref{eqn:sym}
    guarantee that $\rho$ is even and $2\pi/m$-periodic in $t$, and
    hence in particular that $\rho'(t) = 0$ for $t=0,\pi/m$.
    Therefore, if $g \in C^1(\C \without D)$, we have 
    \begin{align*}
      \frac d{dt} g(\phi(e^{it}))
      &= \dell_r g(\phi(e^{it})) \rho'(t)
      + \dell_\theta g(\phi(e^{it})) \vartheta'(t)\\
      &= \Re \left( \frac{e^{it}\phi'(e^{it})}{\phi(e^{it})} \right)
      \dell_\theta g(\phi(e^{it}))
      \quad \ata t = 0,\pi/m.
    \end{align*}
    Setting $w=e^{it}$, we abbreviate this as
    \begin{align}
      \label{eqn:abcorn}
      \frac d{dt} = \Re \left( \frac{w\phi'}{\phi} \right)
      \frac \dell{\dell\theta}
      \qquad \ata t = 0,\pi/m.
    \end{align}
    
    Reintroducing the dependence on $s$, \eqref{eqn:nw:t}
    is equivalent to 
    \begin{align}
      \label{eqn:desiredg}
      g(t;s) := \dell_\theta\Psi(\phi(e^{it};s);s) > 0 
      \quad \fora 0 < t < \pi/m.
    \end{align}
    The symmetries \eqref{eqn:sym} guarantee that $g$ is an odd and
    $2\pi/m$-periodic function of $t$. Moreover, as in the proof of
    Lemma~\ref{lem:Psir}, $g$ depends analytically on $s$ as a
    $C^{3+\alpha}$ function, and the expansion \eqref{eqn:Psirexp}
    gives $g(t;0) \equiv 0$ and
    \begin{align*}
      g_s(t;0) = \tfrac 12 \sin mt  > 0 
      \quad 
      \fora 0 < t < \pi/m.
    \end{align*}
    Finally, at $t = 0$ the expansions \eqref{eqn:phis} and
    \eqref{eqn:Psirexp} yield
    \begin{align*}
      g_t(0;s) 
      &= \Re \left( \frac{\phi'}{\phi} \right)
      \dell_{\theta\theta} \Psi(\phi(1;s);s) 
      = s \tfrac m2 + O(s^2),
    \end{align*}
    while at $t=\pi/m$ we similarly obtain
    \begin{align*}
      g_t(\pi/m;s) = -s\tfrac m2 + O(s^2).
    \end{align*}
    Applying Lemma~\ref{lem:elemcont}\ref{lem:elemcont:diff}, we
    deduce that \eqref{eqn:desiredg} and hence \eqref{eqn:nw:t}
    hold for $s > 0$ sufficiently small.

    The proof of \eqref{eqn:n:rt}, \eqref{eqn:n:rtt:r}, and
    \eqref{eqn:n:rtt:l} is quite similar. Setting 
    \begin{align*}
      g(t;s) = \dell_\theta r \dell_r\Psi^-(\phi(e^{it};s);s),
    \end{align*}
    $g$ is again an odd and $2\pi/m$-periodic function of $t$, this
    time with the expansions
    \begin{align*}
      g(t;s) = -s \tfrac m2 \sin mt + O(s^2),
      \qquad 
      g_t(0;s) = -s \tfrac{m^2}2 + O(s^2),
      \qquad 
      g_t(\pi/m;s) = +s \tfrac{m^2}2 + O(s^2).
    \end{align*}
    Lemma~\ref{lem:elemcont}\ref{lem:elemcont:diff} then guarantees
    that $g < 0$ for $0 < t < \pi/m$ and $s > 0$ sufficiently small,
    and hence that $\dell_\theta r\dell_r \Psi < 0$ on $S \cap \dell
    D$. Now $\dell_\theta r\dell_r \Psi$ is harmonic on $S \without
    D$, vanishes at infinity, and vanishes along $\dell S$. Thus, as
    in the proof of Lemma~\ref{lem:nodweak}, the maximum principle
    forces $\dell_\theta r \dell_r \Psi < 0$ on $S \without D$. The
    inequalities \eqref{eqn:n:rtt:r} and \eqref{eqn:n:rtt:l}
    hold on $R \cap \dell D$ and $L \cap \dell D$ by the expansion
    \eqref{eqn:Psirexp}, and on $R \without \dell D$ and $L \without
    \dell D$ by the Hopf lemma applied to $\dell_\theta r \dell_r
    \Psi$.

    The only remaining inequality is \eqref{eqn:n:rr}. A direct
    calculation shows that the function
    \begin{align*}
      \varphi(r,\theta) = (r\dell_r)^2 \Psi^-(r,\theta) + 2\Omega r^2
    \end{align*}
    is harmonic. As with our other harmonic functions, it vanishes at
    infinity. From the expansions \eqref{eqn:Psirexp} and
    \eqref{eqn:phis}, $\varphi(\phi(w;s);s) = O(s)$, and so by the
    maximum principle
    \begin{align*}
      \n\varphi_{L^\infty(\C \without D)} = O(s).
    \end{align*}
    We also of course have
    \begin{align*}
      \min_{z \in \dell D(s)} \Omega(s)\abs z^2
      = \min_{w\in \T} \Omega(s)\abs{\phi(w;s)}^2 = \frac{m-1}{2m} - O(s).
    \end{align*}
    Combining these two facts, we find that on $\C \without D$,
    \begin{align*}
      (r\dell_r)^2\Psi(z;s) 
      &= \varphi(z;s) - 2\Omega \abs z^2\\
      &< \n\varphi_{L^\infty(\C \without D)} -
        2\min_{z \in \dell D} \Omega(s)\abs z^2\\
        &= -\tfrac{m-1}{m} + O(s) < 0
    \end{align*}
    for $s$ sufficiently small.
\end{proof}
\end{proposition}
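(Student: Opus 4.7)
My plan is to verify each nodal inequality first on the boundary $\dell D$ by an $O(s)$ perturbation argument based on the expansions of Lemma~\ref{lem:Psir}, and then to propagate the strict inequalities to the interior of $S \without D$ (or to all of $\C \without D$) using the maximum principle and Hopf lemma applied to suitable harmonic functions. By Lemma~\ref{lem:nodweak} it suffices to establish the four weaker boundary conditions \eqref{eqn:nw} in place of the global conditions \eqref{eqn:n:glob}; the further conditions in \eqref{eqn:n:loc} will be handled similarly once the on-$\dell D$ versions have been proved.

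The pointwise inequalities at the ``corners'' $R \cap \dell D$ and $L \cap \dell D$ follow immediately from the expansions of Lemma~\ref{lem:Psir} by setting $w = 1$ and $w = e^{i\pi/m}$: the leading coefficients $1/(2m)$, $\pm s m/2$, and $\mp s m^2/2$ yield \eqref{eqn:nw:r}, \eqref{eqn:nw:tt:r}--\eqref{eqn:nw:tt:l}, and the $\dell D$ portions of \eqref{eqn:n:rtt:r}--\eqref{eqn:n:rtt:l} for all small $s > 0$. The more delicate step is the strict positivity of $g(t;s) := \dell_\theta\Psi(\phi(e^{it};s);s)$ and strict negativity of $h(t;s) := \dell_\theta r\dell_r\Psi^-(\phi(e^{it};s);s)$ on $0 < t < \pi/m$. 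By the symmetries \eqref{eqn:sym} both $g$ and $h$ are odd and $2\pi/m$-periodic in $t$, hence vanish at both endpoints; both also vanish identically at $s = 0$, and Lemma~\ref{lem:Psir} gives the first-order $s$-expansions $g_s(t;0) = \tfrac12\sin mt$ and $h_s(t;0) = -\tfrac m2 \sin mt$. Since these also vanish at $t = \pi/m$, Lemma~\ref{lem:elemcont}\ref{lem:elemcont:diff} does not apply on the whole interval at once; I will apply it once on $[0,\pi/(2m)]$ (where $g_s(\cdot;0) > 0$ on $(0,\pi/(2m)]$) and once on $[\pi/(2m),\pi/m]$ with reversed orientation. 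The endpoint $t$-derivatives required by the lemma reduce via the polar identity \eqref{eqn:abcorn} to the same expansions of Lemma~\ref{lem:Psir}, giving $g_t(0;s) = +sm/2 + O(s^2)$, $g_t(\pi/m;s) = -sm/2 + O(s^2)$, and analogous formulas for $h$.

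With the $\dell D$ statements in hand, the maximum principle applied to the harmonic functions $\dell_\theta\Psi^-$ and $\dell_\theta r\dell_r\Psi^-$ on $S \without D$ -- each of which vanishes on $\dell S$ by symmetry and at infinity -- gives \eqref{eqn:n:t} and the $S \without D$ version of \eqref{eqn:n:rt}. The Hopf lemma applied along $R$ and $L$ then upgrades the boundary inequalities \eqref{eqn:n:tt:r}--\eqref{eqn:n:tt:l} and \eqref{eqn:n:rtt:r}--\eqref{eqn:n:rtt:l} off of $\dell D$, using the same symmetry-driven vanishing of $\dell_\theta\Psi^-$ and $\dell_\theta r\dell_r\Psi^-$ on $\dell S$.

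The main obstacle is the genuinely global inequality \eqref{eqn:n:rr}, where a naive perturbation argument must fail because $(r\dell_r)^2\Psi^-$ is of order one, not $O(s)$. The remedy is to introduce the auxiliary function $\varphi := (r\dell_r)^2\Psi^- + 2\Omega r^2$, which I expect to be harmonic on $\C \without D$ (since $\Delta\Psi^-$ is constant there) and to vanish at infinity (from the standard asymptotics of the Newtonian potential of $1_D$). Lemma~\ref{lem:Psir} combined with $\abs\phi = 1 + O(s)$ and $\tilde\Omega(s) = (m-1)/(2m) + O(s^2)$ forces $\varphi|_{\dell D} = O(s)$, so the maximum principle gives $\n{\varphi}_{L^\infty(\C \without D)} = O(s)$. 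Combined with the lower bound $2\Omega\abs z^2 \ge 2\Omega\min_{\dell D}\abs z^2 = (m-1)/m + O(s)$ valid on $\C \without D$, this yields $(r\dell_r)^2\Psi^- = \varphi - 2\Omega\abs z^2 \le O(s) - (m-1)/m < 0$ for sufficiently small $s$, completing the proposition.
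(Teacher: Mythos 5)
Your proposal follows essentially the same route as the paper's proof: the reduction via Lemma~\ref{lem:nodweak}, the expansions of Lemma~\ref{lem:Psir} for the pointwise inequalities on $\dell D$, Lemma~\ref{lem:elemcont} for the signs of $\dell_\theta\Psi$ and $\dell_\theta r\dell_r\Psi^-$ along $\dell D$, the maximum principle and Hopf lemma to propagate into $S \without D$, and the harmonic auxiliary function $\varphi = (r\dell_r)^2\Psi^- + 2\Omega r^2$ for \eqref{eqn:n:rr}. Your splitting of $[0,\pi/m]$ at the midpoint so as to apply Lemma~\ref{lem:elemcont}\ref{lem:elemcont:diff} from each endpoint separately is a correct (and slightly more careful) way to handle the fact that $g(\cdot;s)$ and $g_s(\cdot;0)$ vanish at both endpoints; the paper invokes the lemma directly on the whole interval while computing both endpoint derivatives, implicitly using the same two-sided version.
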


Combining Proposition~\ref{prop:Psisign} with simpler arguments
for the other small solutions described by Theorem~\ref{thm:locbif},
we arrive at the following characterization of $\nodal$ near $\triv$.
\begin{lemma}[Nodal properties of small solutions]\label{lem:nodloc}
  \hfill
  \begin{enumerate}[label=\rm(\alph*)]
  \item If $\Omega \ne \Omega_m$, then there exists a
    neighborhood $V \sub U$ of $(0,\Omega)$ such that
    $V \cap \nodal = \varnothing$.
  \item \label{lem:nodloc:bif} 
    There exists a neighborhood $V \sub U$ of
    $(0,\Omega_m)$ such that $V \cap \nodal = \cm_\loc$.
  \end{enumerate}
  \begin{proof}
    We apply Theorem~\ref{thm:locbif}. If $\Omega$ is not one of the
    $\Omega_{nm}$, then we can simply choose $V$ by
    part~\ref{thm:locbif:imp} of that theorem to get $V \cap \nodal
    \sub \nodal \cap \triv = \varnothing$. If, on the other hand,
    $\Omega = \Omega_{nm}$ for some $n > 1$, then by
    part~\ref{thm:locbif:bif} of Theorem~\ref{thm:locbif}, we know
    that $V$ can be chosen so that $V \cap (\soln \without \triv)$
    consists of solutions 
    \begin{align*}
      f = \tilde f(s) = \frac s{w^{nm-1}} + o(s),
    \end{align*}
    where $s \ne 0$ is the parameter along the bifurcation curve, and the
    remainder refers to a term which is $o(s)$ in $X$. Arguing as in
    Lemma~\ref{lem:Psir}, we discover that 
    \begin{align*}
      \dell_\theta\Psi (\phi(w;s);s)
      = -s \tfrac 12 \Im w^{-nm} + O(s^2).
    \end{align*}
    In particular, $\dell_\theta \Psi$ changes sign on $S \cap \dell
    D$ for $s \ne 0$ sufficiently small, contradicting \eqref{eqn:nw:t}.
    By Proposition~\ref{prop:Psisign}, solutions with $n=1$ and $s >
    0$ is sufficiently small, i.e.~solutions on $\cm_\loc$, do lie in
    $\nodal$. On the other hand for $s < 0$ small we have 
    \begin{align*}
      \dell_\theta\Psi (\phi(e^{\pi i/2m};s);s)
      = -s \tfrac 12  + O(s^2) < 0,
    \end{align*}
    again contradicting \eqref{eqn:nw:t}.
  \end{proof}
\end{lemma}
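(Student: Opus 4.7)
The plan is to reduce everything to Theorem~\ref{thm:locbif} and to a leading-order calculation in the spirit of Lemma~\ref{lem:Psir} applied to each of the local bifurcation branches. The structure of the argument mirrors the trichotomy in Theorem~\ref{thm:locbif}: either $\Omega$ is not a critical frequency $\Omega_{nm}$, or it equals $\Omega_{nm}$ for some $n > 1$, or it equals $\Omega_m$ itself.

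For part~(a), I first dispose of the case $\Omega \notin \{\Omega_{nm}\}_{n\ge 1}$: Theorem~\ref{thm:locbif}\ref{thm:locbif:imp} supplies a neighborhood $V$ with $V \cap \soln \sub \triv$, and since $\nodal \sub \soln \without \triv$ by definition we immediately get $V \cap \nodal = \varnothing$. When $\Omega = \Omega_{nm}$ with $n > 1$, Theorem~\ref{thm:locbif}\ref{thm:locbif:bif} together with its uniqueness clause \ref{thm:locbif:uniq} guarantees that, in a suitable $V$, all nontrivial solutions lie on the analytic curve $(\tilde f(s),\tilde\Omega(s))$ with $\tilde f_s(0) = 1/w^{nm-1}$. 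Running the same calculation as in the proof of Lemma~\ref{lem:Psir} but with the leading term $s/w^{nm-1}$ in place of $s/w^{m-1}$, I expect to obtain an expansion of the form
\begin{align*}
\dell_\theta\Psi\big(\phi(w;s);s\big) = -s\,\tfrac{1}{2}\Im w^{-nm} + O(s^2),
\end{align*}
which, restricted to $w = e^{it}$ with $0 < t < \pi/m$, takes both signs since the argument $nmt$ ranges over an interval of length $n\pi > \pi$. Hence for $s$ small and nonzero the condition \eqref{eqn:nw:t}, and therefore \eqref{eqn:n:glob}, must fail, so $V \cap \nodal = \varnothing$.

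For part~(b), I again invoke Theorem~\ref{thm:locbif}\ref{thm:locbif:bif} at $\Omega = \Omega_m$ (i.e.\ $n = 1$) to reduce, on a small enough neighborhood $V$, to the two half-curves $s \in (0,\varepsilon)$ and $s \in (-\varepsilon,0)$. The positive half-curve is $\cm_\loc$, and Proposition~\ref{prop:Psisign} says that, after shrinking $\varepsilon$ if necessary, all of $\cm_\loc$ satisfies both \eqref{eqn:n:glob} and \eqref{eqn:n:loc}, so $\cm_\loc \sub V \cap \nodal$. The remaining task is to exclude the negative half-curve from $\nodal$. For this I reuse the Lemma~\ref{lem:Psir} expansion
\begin{align*}
\dell_\theta\Psi\big(\phi(w;s);s\big) = s\,\tfrac{1}{2}\Im w^{m} + O(s^2);
\end{align*}
evaluating at a fixed interior point such as $w = e^{i\pi/2m}$, the leading term has sign opposite to $s$, so for $s < 0$ small the value is negative, violating \eqref{eqn:nw:t} and therefore \eqref{eqn:n:glob}. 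Combining with the Lemma~\ref{lem:nodweak} equivalence, this shows no $s < 0$ solution belongs to $\nodal$, and we conclude $V \cap \nodal = \cm_\loc$.

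The main obstacle is not conceptual but combinatorial: verifying that the leading-order expansion of $\dell_\theta\Psi$ along the secondary branches ($n > 1$) indeed changes sign on the fundamental sector's arc. This reduces to evaluating the Cauchy-type integrals in \eqref{eqn:Psizw} with $\phi(w;s) = w + s\,w^{1-nm} + O(s^2)$ by residue calculus, exactly as in the proof of Lemma~\ref{lem:Psir}; the only extra care needed is to keep track of which Fourier mode survives. Once that expansion is in hand, the rest is an immediate consequence of Theorem~\ref{thm:locbif}, Proposition~\ref{prop:Psisign}, and Lemma~\ref{lem:nodweak}, with no new maximum-principle or bifurcation input required.
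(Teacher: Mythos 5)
Your proposal is correct and follows essentially the same route as the paper: the implicit-function-theorem case for non-critical $\Omega$, the sign change of the leading mode $\Im w^{-nm}$ on the arc $0<t<\pi/m$ for the secondary branches $n>1$, Proposition~\ref{prop:Psisign} for the $s>0$ half-branch at $\Omega_m$, and evaluation of the Lemma~\ref{lem:Psir} expansion at $w=e^{i\pi/2m}$ to exclude $s<0$. The only blemish is the phrase ``sign opposite to $s$'' in part (b): the leading term is $s\,\tfrac12\Im w^m = s/2$ at $w=e^{i\pi/2m}$, i.e.\ it has the \emph{same} sign as $s$, which is exactly what makes it negative for $s<0$ and produces the desired contradiction with \eqref{eqn:nw:t}.
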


With Proposition~\ref{prop:Psisign} in hand, we can also complete the
proof of Theorem~\ref{thm:phase}.
\begin{proof}[Proof of Theorem~\ref{thm:phase}]
  First we claim that there exists a unique polar curve
  $r=r_c(\theta)$ in $\overline S \without D$ such that $\Psi_r > 0$
  for $r < r_c$ and $\Psi_r < 0$ for $r > r_c$. Since $\Psi_r > 0$ on
  $\dell D$ by \eqref{eqn:n:r} and $\Psi_r \to -\infty$ as $r \to
  \infty$ by \eqref{eqn:Psi:asym}, for each $\theta$ there exists a
  radius $r_c(\theta)$ such that $(r,\theta) \notin D$ and
  $\Psi_r(r_c(\theta),\theta) = 0$. From \eqref{eqn:n:rr} we see that
  $\Psi_{rr} = (r\dell r)^2\Psi/r^2 < 0$ along $r=r_c$, which gives
  the uniqueness and also $r_c \in C^{2+\alpha}$. Indeed, by standard
  elliptic theory $\Psi$ is analytic away from $\dell D$, and so by
  the analytic implicit function theorem $r_c$ is analytic. We
  note that implicit differentiation yields $r_c'  =
  -\Psi_{r\theta}/\Psi_{rr} < 0$.

  From \eqref{eqn:n:t} and the above arguments, the unique
  equilibria in $\overline S \without D$ are $P := (r_c(0),0)$ and $Q :=
  (r_c(\pi/m),\pi/m)$. At $P$, \eqref{eqn:n:rr} and
  \eqref{eqn:n:tt:r} give
  \begin{align*}
    \Psi_{\theta\theta}\Psi_{rr} - 2\Psi_{\theta r}^2
    = \Psi_{\theta\theta} \Psi_{rr}
    < 0
  \end{align*}
  so that $P$ is a saddle. Similarly \eqref{eqn:n:rr} and
  \eqref{eqn:n:tt:l} guarantee that $Q$ is a center.
  The remaining statements now follow from a straightforward nullcline
  analysis (in polar coordinates).
\end{proof}

\subsection{Streamlines for the global curve}
\label{sec:nodal:glob}

We now show that the inequalities \eqref{eqn:n:glob} hold not just
along the local curve $\cm_\loc$ but also along the global curve $\cm$. This
will be sufficient to eliminate the alternative \ref{thm:global:loop}
in Theorem~\ref{thm:global} that $\cm$ forms a closed loop. Unlike in 
Section~\ref{sec:nodal:loc}, we will not be able to rely on asymptotic
expansions, and will instead have to use more subtle arguments
involving the structure of the equations.
Conjecture~\ref{conj:nodal} is that the inequalities \eqref{eqn:n:loc}
also hold along $\cm$, but we only have numerical evidence of this
fact; see Section~\ref{sec:numerical}.

\begin{proposition}[Robustness of the nodal set]\label{prop:openclosed}
  The nodal set $\nodal$ is both relatively open and relatively closed
  in $\soln \without \triv$.
  \begin{proof}
    First we claim that $\nodal$ is relatively open. To be concrete,
    fix $(f^0,\Omega^0) \in \nodal$ and consider $(f^1,\Omega^1) \in
    \soln \without \triv$ with $\n{(f^1,\Omega^1)-(f^0,\Omega^0)}_{X
    \by \R} < \varepsilon$ where $\varepsilon > 0$ is to be
    determined. Let $\Psi^0,\Psi^1$ be the corresponding relative
    stream functions. Then $\Psi^0$ satisfies \eqref{eqn:n:glob}, and
    by Lemma~\ref{lem:nodweak} we will have $(f^1,\Omega^1) \in
    \nodal$ as soon as $\Psi^1$ satisfies \eqref{eqn:nw}. From
    Lemma~\ref{lem:stream} and \eqref{eqn:Psiz}, we know that the
    composition $\dell_r\Psi \circ \phi$ depends continuously on
    $(f,\Omega) \in \soln$ as an element of $C^{3+\alpha}(\T)$. In
    particular, we can choose $\varepsilon > 0$ small enough that
    $\Psi^1$ satisfies \eqref{eqn:nw:r}. For the remaining
    inequalities, we consider the composition
    \begin{align*}
      g(t) := \dell_\theta\Psi(\phi(e^{it}))
    \end{align*}
    so that \eqref{eqn:nw:t} is equivalent to $g > 0$ for $0 < t <
    \pi/m$.
    Again thanks to Lemma~\ref{lem:stream} and \eqref{eqn:Psiz}, we
    see that $g$ depends continuously on $(f,\Omega) \in \soln \without
    \triv$ as an element of $C^{3+\alpha}[0,2\pi]$. Moreover, by
    symmetry, $g$ is an odd and $2\pi/m$-periodic function of $t$.
    Differentiating using \eqref{eqn:abcorn} yields
    \begin{align*}
      g'(t) 
      = \Re \left( \frac{e^{it}\phi'(e^{it})}{\phi(e^{it})} \right)
      \dell_{\theta\theta} \Psi(\phi(e^{it})) 
      \qquad \ata t=0,\pi/m.
    \end{align*}
    For $(f,\Omega) \in \soln$, the factor multiplying
    $\dell_{\theta\theta}\Psi$ above is strictly positive, and indeed
    we can choose $\varepsilon > 0$ so that there is a uniform lower
    bound for the $(f^1,\Omega^1)$ under consideration. Thus, in this
    neighborhood, $g'(0) > 0$ is equivalent to \eqref{eqn:nw:tt:l} and
    $g'(\pi/m) < 0$ is equivalent to \eqref{eqn:nw:tt:r}. Applying
    Lemma~\ref{lem:elemcont}\ref{lem:elemcont:open}, we conclude that
    there exists $\delta > 0$ so that $\n{g^1-g^0}_{C^1} < \delta$
    implies that $g^1 > 0$ for $0 < t < \pi/m$, $(g^1)'(0) > 0$, and
    $(g^1)'(\pi/m) < 0$. Thanks to the continuous dependence of the
    compositions $\Psi_\theta \circ \phi$, $\Psi_{\theta\theta} \circ
    \phi$ on $(f,\Omega) \in \soln$ as elements of $C^{2+\alpha}(\T)$,
    we can therefore choose $\varepsilon > 0$ in terms of $\delta$ so
    that $\Psi^1$ satisfies \eqref{eqn:nw:t}--\eqref{eqn:nw:tt:l} as
    desired.
    
    We now come to the core of the proof: showing that $\nodal$, which
    is defined in terms of strict inequalities, is nevertheless
    relatively closed. Suppose that a sequence of patches
    $(f^n,\Omega^n) \in \nodal$ converges to some patch $(f,\Omega)
    \in \soln \without \triv$, and let $\Psi^n,\Psi$ be the associated
    relative stream functions. Taking limits in \eqref{eqn:nw}, we
    see that 
    \begin{subequations}\label{eqn:nww}
      \begin{alignat}{2}
        \label{eqn:nww:r}
        \Psi_r              & \ge 0 && \ona \dell D, \\ 
        \label{eqn:nww:t}
        \Psi_\theta         & \ge 0 && \ona \dell D,   \\ 
        \label{eqn:nww:tt:r}
        \Psi_{\theta\theta} & \ge 0 && \ona R \cap \dell D,   \\ 
        \label{eqn:nww:tt:l}
        \Psi_{\theta\theta} & \le 0 && \ona L \cap \dell D.
      \end{alignat}
    \end{subequations}
    Now $(f,\Omega) \in U_2$ implies that $\Omega\bar{ \phi } +
    \tfrac 12 \Ca(\phi) \bar\phi \ne 0$, and so by \eqref{eqn:Psizw}
    \begin{align}
      \label{eqn:notboth}
      \tfrac 14 \big(\Psi_r^2 + r^{-2} \Psi_\theta^2 \big)
      = \abs{\dell_z\Psi}^2  
      = \abs{\Omega\bar{ \phi } +
      \tfrac 12 \Ca(\phi) \bar\phi} \ne 0.
    \end{align}
    Suppose for the sake of contradiction that $\Psi_r(z_0) = 0$ at some
    point $z_0 = \phi(e^{it_0}) \in \dell D$, in which case \eqref{eqn:notboth}
    forces $\Psi_\theta(z_0) \ne 0$. Writing $\phi =
    \rho e^{i\vartheta}$ as in Lemma~\ref{lem:polar},
    $\Psi|_{\dell D} \equiv 0$ gives
    \begin{align}
      \label{eqn:difft}
      0 = \frac d{dt} \Psi(\rho e^{i\vartheta})
      = \Psi_r \rho' + \Psi_{\theta} \vartheta'
      = \Psi_{\theta} \vartheta'
    \end{align}
    at $z_0$, and hence that $\vartheta' = 0$ there. But
    $(f,\Omega) \in U_3$ forces
    \begin{align*}
      \vartheta'(t) = \Re \left( \frac{e^{it}\phi'(e^{it})}{\phi(e^{it})} \right) > 0,
    \end{align*}
    so this is a contradiction.
    
    We now turn to \eqref{eqn:nww:t}--\eqref{eqn:nww:tt:l}. Since
    $\Psi_r > 0$ on $\dell D$, we know that $\dell D$ is a
    $C^{3+\alpha}$ curve which can be written in polar coordinates as
    $r=R(\theta)$. Thus the restrictions $\Psi^+ := \Psi|_D$ and
    $\Psi^- := \Psi|_{\C \without \overline D}$ are also $C^{3+\alpha}$ up
    to their respective boundaries.
    As in the proof of Lemma~\ref{lem:nodweak}, applying the strong
    maximum principle to $\Psi^\pm_\theta$ shows that $\Psi_\theta  >
    0$ on $S \without \dell D$. 
    
    We now study the values $\Psi_\theta$ on $S \cap \dell D$ using
    the Hopf lemma. Since $\Psi|_{\dell D} \equiv 0$ and $\Psi_r > 0$,
    an outward-pointing normal vector along $\dell D$ is
    \begin{align*}
      \frac\dell{\dell n} 
      := \frac \dell{\dell r} + \frac 1{r^2} 
      \frac{\Psi_\theta}{\Psi_r} \frac \dell {\dell \theta}.
    \end{align*}
    Suppose for the sake of contradiction that $\Psi_\theta = 0$ at
    some point $z_0 \in S \cap \dell D$. Applying the Hopf lemma
    separately to $\Psi^\pm_\theta$, we find
    \begin{align}
      \label{eqn:contradictthis}
      \Psi^+_{\theta r} = 
      \frac \dell {\dell n} \Psi^+_\theta  < 
      0 < \frac \dell {\dell n} \Psi^-_\theta = \Psi^-_{\theta r}
    \end{align}
    at $z_0$. On the other hand, differentiating $\grad \Psi^+ = \grad
    \Psi^-$ along $\dell D$ yields
    \begin{align}
      \label{eqn:tcont}
      \Psi^+_{\theta\theta} - \frac{\Psi_\theta}{\Psi_r}  \Psi^+_{\theta r} 
      &=
      \Psi^-_{\theta\theta} - \frac{\Psi_\theta}{\Psi_r}  \Psi^-_{\theta r}, \\
      \label{eqn:rcont}
      \Psi^+_{r\theta} - \frac{\Psi_\theta}{\Psi_r}  \Psi^+_{r r} 
      &=
      \Psi^-_{r\theta} - \frac{\Psi_\theta}{\Psi_r}  \Psi^-_{r r}.
    \end{align}
    At $z_0$, $\Psi_\theta = 0$ and so \eqref{eqn:rcont} reduces to
    $\Psi^+_{r\theta}=\Psi^-_{r\theta}$, contradicting
    \eqref{eqn:contradictthis}. This completes the proof of
    \eqref{eqn:nw:t}.

    Finally, we treat \eqref{eqn:nw:tt:r}--\eqref{eqn:nw:tt:l} using
    the Serrin edge-point lemma~\cite[Appendix~E]{fraenkel:book}.
    Suppose for the sake of contradiction that $\Psi_{\theta\theta} =
    0$ at the singleton $z_0 \in \dell D \cap R$. Here we have dropped
    the superscript $\pm$ since $\Psi \in C^1(\C)$ and symmetry
    force $\Psi_{\theta\theta}^+ = \Psi^-_{\theta\theta}$ at $z_0$.
    Symmetry also forces
    $\Psi_{\theta r} = \Psi_{\theta rr} = \Psi_{\theta \theta\theta} =
    0$ along $R$. Considering $R \cap D$ as a lateral boundary of $S
    \cap D$, the Hopf lemma implies that $\Psi^+_{\theta\theta} >
    0$ there. Similarly we find that $\Psi^-_{\theta\theta} > 0$ on $R
    \without \overline D$. Thus we must have $\Psi^+_{\theta\theta r} \le
    0$ and $\Psi^-_{\theta\theta r} \ge 0$ at $z_0$. On the other hand,
    differentiating \eqref{eqn:rcont} along the boundary once more and
    plugging in $\Psi_{\theta\theta} = \Psi_{\theta r} = \Psi_{\theta
    rr} = \Psi_{\theta \theta\theta} = 0$ we eventually discover that
    $\Psi^+_{\theta\theta r} = \Psi^-_{\theta\theta r}$ at $z_0$, and so
    we must have $\Psi^+_{\theta\theta r} = \Psi^-_{\theta\theta r} =
    0$ there. But now we have shown that all of the first and second
    partials of $\Psi^+_\theta$ vanish at $z_0$, violating the Serrin
    edge-point lemma.
    The argument at $\dell D \cap L$ is similar.
  \end{proof}
\end{proposition}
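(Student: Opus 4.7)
The plan is to separately establish the two topological properties. For \emph{openness}, I would first invoke Lemma~\ref{lem:nodweak} to reduce to checking only the weaker boundary conditions \eqref{eqn:nw}. The compositions $\Psi_r\circ\phi$, $\Psi_\theta\circ\phi$, and $\Psi_{\theta\theta}\circ\phi$ depend continuously on $(f,\Omega)\in\soln$ as $C^{3+\alpha}(\T)$-valued maps via Lemma~\ref{lem:stream} combined with the analyticity of the Cauchy operator (Theorem~\ref{thm:cauchy}), so the strict inequality $\Psi_r>0$ on the compact set $\dell D$ is obviously preserved under small perturbations. For the $\Psi_\theta$ and $\Psi_{\theta\theta}$ conditions I would introduce $g(t)=\Psi_\theta(\phi(e^{it}))$, which by symmetry is odd and $2\pi/m$-periodic so that $g(0)=g(\pi/m)=0$. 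A chain-rule computation using Lemma~\ref{lem:polar} and the $U_3$ bound $\Re(w\phi'/\phi)>0$ shows that $g'(0)$ and $g'(\pi/m)$ have the same signs as $\Psi_{\theta\theta}$ at the two corners of $\dell D$. Thus the four conditions in \eqref{eqn:nw} together amount to $g$ having a simple zero of the correct sign at each endpoint of $[0,\pi/m]$ and being positive in between, which is an open condition in $C^1$ by Lemma~\ref{lem:elemcont}\ref{lem:elemcont:open}.

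The more delicate task is \emph{closedness}: given $(f^n,\Omega^n)\in\nodal$ converging to $(f,\Omega)\in\soln\setminus\triv$, taking limits in \eqref{eqn:nw} yields only non-strict inequalities which must be upgraded back. I would proceed in three steps of increasing difficulty. First, to rule out $\Psi_r(z_0)=0$ on $\dell D$: by $(f,\Omega)\in U_2$ and Lemma~\ref{lem:stream}, the full relative velocity $|\grad\Psi|$ is nonzero on $\dell D$, so necessarily $\Psi_\theta(z_0)\ne 0$; but writing $\phi=\rho e^{i\vartheta}$ and differentiating $\Psi\circ\phi\equiv 0$ in $t$ gives $\Psi_r\rho'+\Psi_\theta\vartheta'=0$ at $z_0$, and combined with $\vartheta'>0$ (from $U_3$) this forces $\Psi_\theta=0$, a contradiction. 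Second, $\Psi_\theta>0$ on the interior of $S$ follows from the strong maximum principle applied to the harmonic function $\Psi_\theta$ on $S\cap D$ and $S\setminus\overline D$ separately, using that it vanishes on $\dell S$ by symmetry and at infinity by \eqref{eqn:Psi:asym}. Third, to rule out $\Psi_\theta(z_0)=0$ for $z_0\in S\cap\dell D$, the Hopf lemma applied separately to $\Psi^\pm_\theta$ would force $\Psi^+_{\theta r}(z_0)<0<\Psi^-_{\theta r}(z_0)$; on the other hand, differentiating the continuity of $\grad\Psi$ along $\dell D$ yields a compatibility relation which at $z_0$ collapses (since $\Psi_\theta=0$ there) to $\Psi^+_{r\theta}=\Psi^-_{r\theta}$, a contradiction.

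The main obstacle is the last step: upgrading $\Psi_{\theta\theta}\ge 0$ at the corner points $R\cap\dell D$ to strict positivity (the inequality on $L$ is analogous). At a hypothetical equality point $z_0\in R\cap\dell D$, reflection symmetry forces $\Psi_\theta=\Psi_{\theta r}=\Psi_{\theta rr}=\Psi_{\theta\theta\theta}\equiv 0$ along all of $R$, and $C^1$-regularity plus symmetry yields $\Psi^+_{\theta\theta}(z_0)=\Psi^-_{\theta\theta}(z_0)$. Treating $R\cap D$ and $R\setminus\overline D$ as lateral boundaries of $S\cap D$ and $S\setminus\overline D$, the Hopf lemma applied to the (now known to be) positive harmonic functions $\Psi^\pm_\theta$ gives $\Psi^\pm_{\theta\theta}>0$ on these segments away from $z_0$, and monotonicity of these nonnegative boundary values as one approaches $z_0$ produces the sign constraints $\Psi^+_{\theta\theta r}(z_0)\le 0\le \Psi^-_{\theta\theta r}(z_0)$. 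Differentiating the tangential compatibility relation once more and plugging in the long list of vanishing partials should force $\Psi^+_{\theta\theta r}(z_0)=\Psi^-_{\theta\theta r}(z_0)$, hence both equal zero. At that point $\Psi^+_\theta$ is a harmonic function on the corner domain $S\cap D$ vanishing along both boundary arcs $R\cap D$ and $S\cap\dell D$ meeting at $z_0$, together with all of its first and second derivatives at $z_0$, which contradicts the Serrin edge-point lemma.
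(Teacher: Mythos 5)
Your proposal is correct and follows essentially the same route as the paper: the same reduction via Lemma~\ref{lem:nodweak} and the simple-root Lemma~\ref{lem:elemcont} for openness, and the same three-stage upgrade (nonvanishing velocity plus $U_3$ for $\Psi_r$, Hopf lemma plus continuity of $\grad\Psi$ across $\dell D$ for $\Psi_\theta$, and the Serrin edge-point lemma at the corners) for closedness. One immaterial slip: $\Psi^+_\theta$ does not vanish along $S\cap\dell D$ (it is positive there); the edge-point contradiction only needs that all first and second partials of $\Psi^+_\theta$ vanish at $z_0$, which you do establish.
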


We can now rule out the possibility of a loop by combining the
previous two lemmas.
\begin{proposition}[No loop]\label{prop:noloop}
  The nodal properties \eqref{eqn:n:glob} hold for all elements of
  $\cm$, i.e.~$\cm \sub \nodal$. Moreover,
  alternative~\ref{thm:global:loop} in Theorem~\ref{thm:global} does
  not occur and so \ref{thm:global:blowup} must occur.
  \begin{proof}
    With $\tilde f, \tilde \Omega$ as in Theorem~\ref{thm:global}, 
    set
    \begin{align*}
      s^* = \sup\{ \tilde s > 0 : 
      (\tilde f(s),\tilde \Omega(s)) \in \nodal 
      \text{ for $0 < s < \tilde s$} \}.
    \end{align*}
    By Lemma~\ref{lem:nodloc}, the set on the right hand side is
    nonempty. Assume for the sake of contradiction that $s^* <
    \infty$. By Lemma~\ref{prop:openclosed}, if $(\tilde f(s^*), \tilde
    \Omega(s^*)) \notin\triv$, then there is a neighborhood of
    $(\tilde f(s^*),\tilde \Omega(s^*))$ in $\soln \without \triv$
    which is contained in $\nodal$, a contradiction. So $(\tilde
    f(s^*),\tilde \Omega(s^*)) \in \triv$. Applying
    Lemma~\ref{lem:nodloc}, the only possibility is then $(\tilde
    f(s^*),\tilde \Omega(s^*)) = (0,\Omega_m)$. Appealing to
    Theorem~\ref{thm:locbif}\ref{thm:locbif:bif} and
    Lemma~\ref{lem:nodloc}\ref{lem:nodloc:bif}, this means that for $0
    < s < s^*$ the global curve $\cm$ has revisited portions of the
    local curve $\cm_\loc$ twice (once in either direction) without
    first revisiting the bifurcation point $(0,\Omega_m)$. This
    contradicts the analytic construction of $\cm$ in
    \cite[Theorem~9.1.1]{bt:analytic} or alternatively the 
    fact that $\cm$ has a local real-analytic reparametrization; see
    \cite[Proof of Theorem~5]{csv:critical}.
  \end{proof}
\end{proposition}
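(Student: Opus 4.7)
The plan is to prove the stronger assertion $\cm \subset \nodal$ by a maximal-continuation argument; the exclusion of alternative~\ref{thm:global:loop} then falls out as a byproduct. Set
\[
  s^* := \sup\{\tilde s > 0 : (\tilde f(s),\tilde \Omega(s)) \in \nodal \text{ for all } 0 < s < \tilde s\}.
\]
Since $\cm_\loc \subset \nodal$ by Lemma~\ref{lem:nodloc}\ref{lem:nodloc:bif}, we have $s^* > 0$, and the goal is to establish $s^* = \infty$.

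Suppose for contradiction that $s^* < \infty$, and consider the limit point $p^* := (\tilde f(s^*),\tilde\Omega(s^*))$, which lies in $\soln$ by continuity of the parametrization together with the closedness of $\soln$ in $U$. If $p^* \in \soln \without \triv$, then the relative closedness of $\nodal$ in $\soln \without \triv$ supplied by Proposition~\ref{prop:openclosed} forces $p^* \in \nodal$, while the relative openness yields an $X\by\R$-neighborhood of $p^*$ in $\soln \without \triv$ contained in $\nodal$; by continuity of $s \mapsto (\tilde f(s),\tilde\Omega(s))$ the nodal property then extends past $s^*$, contradicting the definition of $s^*$. Hence $p^* \in \triv$, so $p^* = (0,\Omega^*)$ for some $\Omega^* \in \R$. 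Lemma~\ref{lem:nodloc} rules out $\Omega^* \ne \Omega_m$ (a neighborhood of such a trivial solution is disjoint from $\nodal$, again contradicting the supremum definition via continuity), so $\Omega^* = \Omega_m$.

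It remains to derive a contradiction from the fact that $\cm$ returns to the bifurcation point at $s = s^*$. By Lemma~\ref{lem:nodloc}\ref{lem:nodloc:bif}, every nodal solution in some neighborhood $V$ of $(0,\Omega_m)$ must lie on the single branch $\cm_\loc$. Thus, for $s$ slightly less than $s^*$, the curve $(\tilde f(s),\tilde\Omega(s))$ coincides with points of $\cm_\loc$. But $\cm$ is already parametrized by $\cm_\loc$ on $(0,\varepsilon)$, and so $\cm$ would retrace $\cm_\loc$ in the opposite direction just before returning to $(0,\Omega_m)$. This is incompatible with the uniqueness of the local real-analytic reparametrization of $\cm$ furnished by the analytic global bifurcation construction in \cite[Theorem~9.1.1]{bt:analytic}, combined with the local uniqueness in Theorem~\ref{thm:locbif}\ref{thm:locbif:uniq}; the precise contradiction follows the template of \cite[Proof of Theorem~5]{csv:critical}. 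Therefore $s^* = \infty$, establishing $\cm \subset \nodal$.

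Finally, a closed loop as in alternative~\ref{thm:global:loop} would entail $(\tilde f(T),\tilde\Omega(T)) = (0,\Omega_m)$ for some finite $T > 0$ (the starting bifurcation point lies on the loop), but this is exactly the scenario just ruled out. Hence only alternative~\ref{thm:global:blowup} can occur. The principal obstacle is the second case of the dichotomy above: ruling out that $\cm$ could silently return to the bifurcation point while remaining in $\nodal$. This hinges on the analytic/uniqueness properties of both the local and global constructions, which prevent the curve from re-traversing $\cm_\loc$ in reverse.
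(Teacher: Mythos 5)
Your proof is correct and follows essentially the same route as the paper's: the same maximal-$s^*$ continuation argument, the same use of the open/closed dichotomy from Proposition~\ref{prop:openclosed} and of Lemma~\ref{lem:nodloc} to pin the limit point to $(0,\Omega_m)$, and the same appeal to the local real-analytic reparametrization to exclude a reversed retracing of $\cm_\loc$. Your final paragraph merely makes explicit the step (a loop would force a return to the trivial point $(0,\Omega_m)\notin\nodal$) that the paper leaves implicit.
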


\subsection{Analyticity of the patch boundary}\label{sec:n:reg}

In Lemma~\ref{lem:extrareg} we showed that $\phi$ and hence also
$\dell D$ are smooth for every vortex patch in $\soln$. For patches
close to the unit disk, Castro, C\'ordoba, and
G\'omez-Serrano~\cite{ccg:reg} have shown that $\dell D$ and hence
also $\phi$ are in fact analytic. (Their argument also applies near
ellipses, and when the Euler equation is replaced by the generalized
Surface Quasi-Geostrophic equation.) In this section, we observe that
\emph{every} solution in $\soln$ is analytic. The proof relies on a
theorem of Kinderlehrer, Nirenberg, and Spruck~\cite{kns:freereg} for
elliptic free-boundary problems.

A consequence of Proposition~\ref{prop:noloop} is that every
solution along the global curve $\cm$ satisfies $\partial
\Psi/\partial n > 0$ on $\dell D$, where $n$ is a normal vector
pointing out of $D$. In the following lemma, we prove this more
generally.
\begin{lemma}\label{lem:simplemax}
  Let $\Psi,D,\Omega$ solve \eqref{eqn:Psi} with $D \in C^1$ and 
  $\Omega < 1/2$. Then
  \begin{align*}
    \frac{\partial \Psi}{\partial n} > 0 \ona \dell D.
  \end{align*}
  If $\Omega > 1/2$, then the reverse inequality holds.
  \begin{proof}
    First assume that $\Omega < 1/2$. From \eqref{eqn:Psi}, $\Psi$
    satisfies
    \begin{align*}
      \Delta \Psi = 1-2\Omega > 0 \ina D, \qquad \Psi = 0 \ona \dell
      D. 
    \end{align*}
    By the strong maximum principle, $\Psi$ therefore achieves its
    maximum over $\overline D$ on $\dell D$, where it is constant. Since
    $\dell D$ is $C^1$, by the Hopf lemma either $\partial
    \Psi/\partial n > 0$ at every point of $\dell D$ or $\Psi$ is
    constant in $D$. Since $\Delta \Psi > 0$ in $D$, $\Psi$ cannot be
    constant, and so the proof is complete. The argument for $\Omega >
    1/2$ is identical except that all of the inequalities are
    reversed.
  \end{proof}
\end{lemma}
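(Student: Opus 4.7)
The key observation is that \eqref{eqn:Psi:lap} restricted to $D$ reads $\Delta \Psi = 1-2\Omega$, a constant whose sign is controlled exactly by the hypothesis on $\Omega$. When $\Omega < 1/2$, this constant is strictly positive, so $\Psi$ is strictly subharmonic in $D$; together with the boundary condition $\Psi=0$ on $\dell D$ from \eqref{eqn:Psi:kin}, this is precisely the setting of the strong maximum principle and the Hopf boundary-point lemma. Since $\Psi \in C^1(\C)$ by \eqref{eqn:Psi:reg}, the outward normal derivative $\dell_n \Psi$ on $\dell D$ is unambiguous and can be computed from either side.

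My plan is therefore in two short steps. First, apply the strong maximum principle to the subharmonic function $\Psi$ on $\overline D$: since $\Psi|_{\dell D}=0$ and $\Delta\Psi = 1-2\Omega > 0$ rules out $\Psi \equiv 0$, we conclude $\Psi < 0$ strictly throughout $D$. Second, at each $z_0 \in \dell D$, apply the Hopf lemma to upgrade the trivial inequality $\dell_n \Psi(z_0) \ge 0$ (which follows from $\Psi \le 0 = \Psi(z_0)$ and the $C^1$ smoothness of $\Psi$) to the strict $\dell_n \Psi(z_0) > 0$. The case $\Omega > 1/2$ is obtained by applying the same argument to $-\Psi$, or equivalently by reversing every inequality.

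The only point requiring care is that the Hopf boundary-point lemma is classically stated assuming $\dell D$ satisfies an interior ball condition (which holds, e.g., for $C^{1,\alpha}$ domains), while here we assume only $D \in C^1$. I expect this to be the main, and essentially only, technical obstacle. It can be handled in either of two standard ways: (i) invoke a version of Hopf valid for $C^1$ domains, or (ii) use interior elliptic regularity to note that $\Psi$ is smooth on the open set $D$ and build an explicit barrier of the form $e^{-\gamma |z-z^*|^2} - e^{-\gamma r^2}$ inside a small interior tangent cone (which exists at every point of a $C^1$ boundary), following a Hopf-style comparison argument. Either route is routine and does not require any information about $\Psi$ beyond what \eqref{eqn:Psi} already supplies.
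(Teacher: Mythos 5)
Your proposal is correct and follows essentially the same route as the paper: strong maximum principle applied to the subharmonic function $\Psi$ with $\Delta\Psi = 1-2\Omega > 0$ and $\Psi|_{\dell D}=0$, followed by the Hopf lemma to obtain the strict sign of $\dell_n\Psi$, with the $\Omega>1/2$ case handled by reversing inequalities. Your extra attention to the Hopf lemma on merely $C^1$ boundaries is a point the paper passes over silently, and your suggested fixes are standard and adequate.
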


\begin{theorem}[Analyticity of $\dell D$]\label{thm:freereg}
  Let $\Psi,D,\Omega$ solve \eqref{eqn:Psi} with $D \in C^1$. If $\Psi
  \in C^2(\C \without D) \cap C^2(\overline D)$, then $\dell D$ is
  analytic. 
  \begin{proof}
    If $\Omega = 1/2$, then $D$ is a disk by \cite{hmidi:trivial},
    and hence $\dell D$ is certainly analytic. So assume that
    $\Omega \ne 1/2$.
    Introducing the notation $\Omega^+ = D$, $\Omega^- = \C \without
    D$, and $\Gamma = \dell D$, we have that $\Psi \in
    C^1(\Omega^+ \cup \Omega^- \cup \Gamma) \cap C^2(\Omega^+ \cup
    \Gamma) \cap C^2(\Omega^- \cup \Gamma)$ satisfies the
    inhomogeneous linear elliptic equations
    \begin{alignat*}{2}
      F(z,\Psi,D\Psi,D^2\Psi) &:= \Delta \Psi + 2\Omega  = 0
      &\qquad& \ina \Omega^+,\\
      G(z,\Psi,D\Psi,D^2\Psi) &:= \Delta \Psi + 2\Omega-1 = 0
      && \ina \Omega^-.
    \end{alignat*}
    Moreover, Lemma~\ref{lem:simplemax} implies
    \begin{align*}
      \Psi = 0, \, \frac{\dell \Psi}{\dell n} \ne 0 \qquad \ona \Gamma.
    \end{align*}
    Thus, by \cite[Theorem~3.1]{kns:freereg}, $\Gamma = \dell D$ is
    analytic. 
  \end{proof}
\end{theorem}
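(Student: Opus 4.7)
My plan is to treat this as a free boundary regularity problem and reduce it to the Kinderlehrer--Nirenberg--Spruck theorem from \cite{kns:freereg}, which is the natural tool since the PDE has analytic (constant) coefficients on each side of $\dell D$ and we only need a non-degeneracy condition along the free boundary.

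First I would dispose of the degenerate value $\Omega = 1/2$. At this value, $\Delta \Psi$ vanishes in the exterior, and a Serrin-type overdetermined argument (this is the content of the cited result \cite{hmidi:trivial}) forces $D$ to be a disk, in which case $\dell D$ is trivially analytic. So the interesting case is $\Omega \ne 1/2$, and from here I want to produce the non-degeneracy $\dell \Psi/\dell n \ne 0$ on $\dell D$.

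To that end I would prove the auxiliary Lemma~\ref{lem:simplemax}: on $D$ one has $\Delta \Psi = 1 - 2\Omega$, which has a definite sign when $\Omega \ne 1/2$, together with $\Psi = 0$ on $\dell D$. The strong maximum principle says $\Psi$ attains its extremum over $\overline D$ along the boundary (upper if $\Omega < 1/2$, lower if $\Omega > 1/2$), and since $\dell D \in C^1$ the Hopf lemma upgrades this to a strict sign for the inward-normal derivative from inside; the opposite dichotomy (constancy) is ruled out by $\Delta \Psi \ne 0$. This yields the required non-vanishing of $\dell \Psi / \dell n$ on $\dell D$.

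With non-degeneracy secured, the last step is essentially a citation. Set $\Omega^+ := D$, $\Omega^- := \C \without \overline D$, and $\Gamma := \dell D$. The hypotheses give $\Psi \in C^1$ across $\Gamma$ and $\Psi \in C^2$ up to $\Gamma$ from each side, with $\Psi$ satisfying the linear (hence fully nonlinear, analytic in all arguments) elliptic equations $\Delta \Psi + 2\Omega = 0$ on $\Omega^+$ and $\Delta \Psi + 2\Omega - 1 = 0$ on $\Omega^-$, together with the free boundary conditions $\Psi = 0$ and $\dell \Psi / \dell n \ne 0$ on $\Gamma$. These are exactly the conditions of \cite[Theorem~3.1]{kns:freereg}, which concludes that $\Gamma$ is (real-)analytic. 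No single step here is truly hard; the main subtlety is simply verifying that the $C^2$ regularity of $\Psi$ on each side, combined with the $C^1$ matching across the interface, meets the precise smoothness assumptions demanded by the Kinderlehrer--Nirenberg--Spruck theorem -- which is guaranteed by hypothesis and by Lemma~\ref{lem:simplemax} respectively.
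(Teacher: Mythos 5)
Your proposal is essentially identical to the paper's proof: dispose of $\Omega=1/2$ by citing \cite{hmidi:trivial}, derive the non-degeneracy $\dell\Psi/\dell n\ne 0$ on $\dell D$ from the strong maximum principle and the Hopf lemma applied in $D$ (this is exactly Lemma~\ref{lem:simplemax}), and then invoke \cite[Theorem~3.1]{kns:freereg}. One tiny slip in your parenthetical gloss of the cited result: when $\Omega=1/2$ it is in the \emph{interior} $D$ that $\Delta\Psi = 1-2\Omega$ vanishes, not the exterior, but this does not affect the argument since the conclusion rests on the citation.
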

We note that Theorem~\ref{thm:freereg} and Lemma~\ref{lem:simplemax}
have local versions where only part of $\dell D$ is assumed to be
$C^1$.
\begin{corollary}\label{cor:freereg}
  Let $(f,\Omega) \in \soln$ and let $D \sub \C$ be the associated
  vortex patch. Then $\dell D$ and $f$ are
  both real-analytic.
  \begin{proof}
    From the regularity of $f$ we know that $D \in C^{3+\alpha}$, and
    so standard elliptic theory gives $\Psi \in C^2(\overline D) \cap
    C^2(\C \without D)$. Thus Theorem~\ref{thm:freereg} applies and
    $\dell D$ is analytic. The conformal mapping $\Phi$ therefore
    extends to an holomorphic (and one-to-one) mapping on $\{\abs w >
    1 - \varepsilon\}$ for some $\varepsilon >
    0$~\cite[Proposition~3.1]{pommerenke:book}, and so in particular
    $t \mapsto f(e^{it}) = \Phi(e^{it}) - e^{it}$ is a real-analytic
    function of $t$ as desired.
  \end{proof}
\end{corollary}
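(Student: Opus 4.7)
The plan is to apply the free-boundary regularity result Theorem~\ref{thm:freereg} and then transfer analyticity of $\dell D$ back to $\phi = w+f$ by means of the Schwarz reflection / boundary extension theory for conformal maps.

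First I would verify the hypotheses of Theorem~\ref{thm:freereg}. Since $(f,\Omega)\in \soln = \soln^{3+\alpha}$, by definition $\phi = w+f \in C^{3+\alpha}(\T)$, and by Theorem~\ref{thm:cauchy} (or directly from $\phi \in \tilde U_1^{3+\alpha}$, which follows from $(f,\Omega)\in U\subset U_3 \subset U_1$) $\phi$ extends to a conformal map $\Phi \maps \C\without \D \to \C\without D$ with $C^{3+\alpha}$ boundary regularity. In particular $D\in C^{3+\alpha}\subset C^1$. Since $\Psi$ solves the inhomogeneous problem \eqref{eqn:Psi:lap}--\eqref{eqn:Psi:reg} with right-hand side $1_D - 2\Omega$, classical elliptic Schauder theory applied separately on $D$ and $\C\without\overline D$ (where the right-hand side is constant and the boundary is $C^{3+\alpha}$, together with the boundary condition $\Psi = 0$ on $\dell D$ and the asymptotic condition at infinity) gives $\Psi \in C^2(\overline D) \cap C^2(\C\without D) \cap C^1(\C)$. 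All hypotheses of Theorem~\ref{thm:freereg} are now verified, so I can conclude that $\dell D$ is real-analytic.

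Next I would transfer this analyticity to $\phi$. Because $\dell D$ is an analytic Jordan curve, a classical result in the theory of conformal mappings (e.g.~Proposition~3.1 in Pommerenke's book, cited in the statement) asserts that the conformal map $\Phi \maps \C\without\overline\D \to \C\without\overline D$ extends analytically across the boundary $\T$: namely $\Phi$ has a holomorphic and injective extension to an annular neighborhood $\{|w| > 1-\varepsilon\}$ for some $\varepsilon > 0$. Consequently the boundary parametrization $t \mapsto \phi(e^{it}) = \Phi(e^{it})$ is real-analytic in $t$, and hence so is $f(e^{it}) = \phi(e^{it}) - e^{it}$. This is exactly the claim that $f$ is real-analytic on $\T$.

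I do not anticipate significant obstacles here: Theorem~\ref{thm:freereg} is the heavy tool and has already been established in the previous subsection, and the transfer of analyticity to $\phi$ via the Schwarz reflection principle for conformal maps across analytic arcs is standard. The only small point to be careful about is making sure the Schauder estimates genuinely produce $C^2$ up to the boundary from both sides, but this is immediate once $\dell D\in C^{3+\alpha}$ is in hand.
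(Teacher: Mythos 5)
Your proposal is correct and follows exactly the same route as the paper's proof: verify the hypotheses of Theorem~\ref{thm:freereg} via the $C^{3+\alpha}$ regularity of $\dell D$ and Schauder theory, conclude analyticity of $\dell D$, and then transfer it to $\phi$ (hence $f$) using the holomorphic extension of $\Phi$ across an analytic boundary curve from \cite[Proposition~3.1]{pommerenke:book}. The extra detail you supply on checking the elliptic estimates is a sensible elaboration of what the paper leaves implicit.
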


\section{Uniform bounds}\label{sec:uniform}

We now turn our attention to the remaining alternative
\ref{thm:global:blowup} in Theorem~\ref{thm:global}. We will show that
the various quantities appearing in \eqref{eqn:global:blowup} can all
be controlled by the first and third terms, i.e.~by the relative fluid
speed and the tangent angle of the interface.

\subsection{Uniform regularity}
In this subsection we establish control over the H\"older norm
$\n\phi_{C^{1+\alpha}}$ appearing in \eqref{eqn:global:blowup}. The
first step is the following estimate from the theory of conformal
mappings.

\begin{lemma}[Koebe $1/4$ theorem]\label{lem:koebe}
  Any $(\phi-w,\Omega) \in U_1$ satisfies the bound
  $\n\phi_{L^\infty} \le 4$.
  \begin{proof}
    From Section~\ref{sec:space}, we know that $\phi$ extends to a
    conformal mapping $\Phi \maps \C \without \overline\D \to \C$.
    Consider the related conformal mapping $g \maps \overline\D
    \to \C$ defined by $g(\zeta) = 1/\Phi(1/\zeta)$. We easily check
    that $g(0) = 0$ and $g'(0) = 1$. Thus by the Koebe $1/4$ theorem
    (see for instance \cite[Theorem~1.3]{pommerenke:book}), we obtain
    $\abs{g(\zeta)} \ge 1/4$ for $\zeta \in \T$. Rewriting in terms of
    $\Phi$ we obtain $\abs{\Phi(w)} \le 4$ for $w \in \T$ as desired. 
  \end{proof}
\end{lemma}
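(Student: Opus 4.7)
The strategy is the standard reduction of an exterior univalent map to an interior one so that Koebe's $1/4$ theorem applies. First I would invoke Theorem~\ref{thm:cauchy} together with the inclusion $U_1 \subset X^{1+\alpha} \times \mathbb R$ to conclude that, for any $(\phi-w,\Omega) \in U_1$, the boundary value $\phi$ extends to a univalent holomorphic mapping $\Phi \colon \mathbb{C} \setminus \overline{\mathbb D} \to \mathbb C$, with Laurent expansion
\begin{align*}
  \Phi(w) = w + \sum_{n \ge 1} \frac{a_n}{w^{nm-1}}
\end{align*}
encoded in $\phi - w \in X^{1+\alpha}$. In particular $\Phi(w) = w + O(1/w^{m-1})$ as $\abs w \to \infty$ and $\Phi$ is also $m$-fold symmetric.

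Before reciprocating, I need $\Phi \ne 0$ on $\mathbb C \setminus \overline{\mathbb D}$. This is where the $m$-fold symmetry \eqref{eqn:sym:rot} is essential: if $\Phi(w_0)=0$ for some $\abs{w_0} > 1$, then $\Phi(e^{2\pi i/m}w_0) = e^{2\pi i/m}\Phi(w_0) = 0$ as well, and injectivity of $\Phi$ forces $w_0 = e^{2\pi i/m}w_0$, i.e.\ $w_0 = 0$, contradicting $\abs{w_0}>1$. Thus $\Phi$ is non-vanishing on the exterior domain.

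Next, I would define $g(\zeta) := 1/\Phi(1/\zeta)$ for $\zeta \in \overline{\mathbb D} \setminus\{0\}$ and check that the asymptotics $\Phi(1/\zeta) = 1/\zeta + O(\zeta^{m-1})$ make $g$ extend holomorphically to $\mathbb D$ with $g(0)=0$ and $g'(0)=1$. The map $g$ is univalent on $\mathbb D$ because $\Phi$ is univalent on $\mathbb C \setminus \overline{\mathbb D}$ and reciprocation is injective on $\mathbb C \setminus \{0\}$. Koebe's $1/4$ theorem (\cite[Theorem~1.3]{pommerenke:book}) then gives $B(0,1/4) \subset g(\mathbb D)$. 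Since $g$ extends continuously to $\mathbb T$ and is a conformal parametrization of the Jordan curve bounding $g(\mathbb D)$, the inclusion forces $\abs{g(\zeta)} \ge 1/4$ for $\zeta \in \mathbb T$. Rewriting, $\abs{\Phi(w)} \le 4$ for $w \in \mathbb T$, which is the desired bound $\n\phi_{L^\infty} \le 4$.

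The only nontrivial step is ensuring $\Phi \ne 0$ on the exterior, needed to define the reciprocal; once this is in hand the remainder is the standard normalization trick for Koebe. Everything else is routine verification of the Laurent expansion at $\zeta = 0$.
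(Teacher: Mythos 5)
Your proof is correct and follows essentially the same route as the paper: reduce to the interior map $g(\zeta)=1/\Phi(1/\zeta)$, verify the Koebe normalization $g(0)=0$, $g'(0)=1$, and conclude $\abs{g}\ge 1/4$ on $\T$. Your extra step checking that $\Phi$ is non-vanishing on $\C\setminus\overline\D$ (via the $m$-fold symmetry and injectivity) is a detail the paper's proof passes over silently, and it is a legitimate and welcome addition.
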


We will also want to use the geometric information contained in the
condition $(\phi-w,\Omega) \in U_3$. For this we introduce the
notation
\begin{align}
  \label{eqn:beta}
  \gamma(w) = \arg \frac{w\phi'(w)}{\phi(w)}.
\end{align}
This is the quantity which appears in the definition \eqref{eqn:delta}
of $E_\delta^{1+\alpha}$, and represents (up to a sign) the tangent angle between
$\dell D$ and a circle. The next lemma states that a bound $\n
\gamma_{L^\infty} < \pi/2$ implies a bound on $\n{\phi'}_{L^p}$ for
some $p > 1$.
\begin{lemma}[\cite{gaier:nearly}] \label{lem:gaier}
  For $(\phi-w,\Omega) \in U_3$ and
  $\gamma$ defined in \eqref{eqn:beta} we have
  \begin{align}
    \label{eqn:gaier}
    \int_0^{2\pi} \abs{\phi'(e^{it})}^p  \, dt \le
    \frac{2\pi \cdot 4^p}{\cos(p\n\gamma_{L^\infty})}
    \qquad \fora
    0 \le p < \frac{\pi/2}{\n\gamma_{L^\infty}}.
  \end{align}
  \begin{proof}
    Recall from Section~\ref{sec:space} that $U_3 \sub U_1$, and that
    $(\phi-w,\Omega) \in U_1$ implies that $\phi$ extends to a conformal
    mapping $\Phi \maps \C \without \D \to \C$. Using $\Phi$, we define
    the holomorphic function
    \begin{align*}
      F(w) = \log \frac{w\Phi'(w)}{\Phi(w)} = u(w) + i\gamma(w)
    \end{align*}
    with real part $u$ and imaginary part $\gamma$. Since $F(\infty) =
    0$, the calculus of residues yields 
    \begin{align}
      \label{eqn:lookingood}
      1 = e^{pF(\infty)} = \frac 1{2\pi i} \int_\T
      e^{pF(w)} \frac{dw}w = \frac 1{2\pi} \int_0^{2\pi} e^{pu(e^{it})}
      \cos[p\gamma(e^{it})]\, dt
    \end{align}
    for any $p$. Assuming that $p \n\gamma_{L^\infty} < \pi/2$, we
    have $\cos(p\gamma(e^{it})) \ge \cos(p\n\gamma_{L^\infty})$ so
    that \eqref{eqn:lookingood} implies
    \begin{align}
      \label{eqn:combineme}
      \int_0^{2\pi} \frac{\abs{\Phi'(e^{it})}^p}{\abs{\Phi(e^{it})}^p}  \, dt =
      \int_0^{2\pi} e^{p u(e^{it})} \, dt \le
      \frac{2\pi}{\cos(p\n\gamma_{L^\infty})}.
    \end{align}
    Estimating $\abs{\Phi(e^{it})} \le 4$ using Lemma~\ref{lem:koebe} we
    are left with \eqref{eqn:gaier} as desired.
  \end{proof}
\end{lemma}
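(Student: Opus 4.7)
The plan is to introduce the holomorphic auxiliary function $F(w) := \log\!\big(w\Phi'(w)/\Phi(w)\big)$ on $\C \without \overline\D$, where $\Phi$ is the conformal extension of $\phi$, and exploit the mean value property at infinity. Since $\phi - w \in X$ so that $\Phi(w)/w \to 1$ and $\Phi'(w) \to 1$ as $|w| \to \infty$, the quotient $w\Phi'/\Phi$ tends to $1$, so once we know $F$ is single-valued we will have $F(\infty) = 0$. The imaginary part of $F$ on $\T$ is precisely the angle $\gamma$ defined in \eqref{eqn:beta}, while the real part $u$ satisfies $e^u = |\Phi'|/|\Phi|$ on $\T$. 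The upshot is that controlling $\gamma$ pointwise will give us $L^p$ control on $e^u$, hence on $|\Phi'|$ after using Koebe.

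The most delicate step is to verify that $F$ is well-defined and holomorphic on $\C \without \overline\D$. Non-vanishing of $\Phi'$ is automatic from conformality. Non-vanishing of $\Phi$ requires $0 \in D$: since $\Phi$ is a bijection between $\C \without \overline\D$ and $\C \without \overline D$, the origin being in $D$ rules out a zero of $\Phi$ outside. To single out a branch of the logarithm, I need $w\Phi'/\Phi$ to have winding number zero around the origin along every circle $|w|=r$, $r \ge 1$. Because $w\Phi'/\Phi \to 1$ at infinity, the winding is zero on large circles, and since the function never vanishes on $\C \without \overline\D$ the winding is constant in $r$, hence zero on $\T$ as well. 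With a branch fixed, $F$ is holomorphic, bounded near infinity, with $F(\infty) = 0$.

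From here the proof is short. The function $e^{pF}$ is holomorphic on $\C \without \overline\D$ with Laurent expansion starting from the constant $e^{pF(\infty)} = 1$, so the mean value at infinity gives
\begin{align*}
  1 = \frac 1{2\pi}\int_0^{2\pi} e^{pF(e^{it})}\, dt.
\end{align*}
Taking real parts and writing $F = u + i\gamma$ yields
\begin{align*}
  1 = \frac 1{2\pi}\int_0^{2\pi} e^{p u(e^{it})}\cos\!\big(p\gamma(e^{it})\big)\, dt.
\end{align*}
Under the hypothesis $p\n\gamma_{L^\infty} < \pi/2$, we have $\cos(p\gamma) \ge \cos(p\n\gamma_{L^\infty}) > 0$ pointwise, and rearranging produces $\int_0^{2\pi} e^{pu}\, dt \le 2\pi/\cos(p\n\gamma_{L^\infty})$.

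Finally, on $\T$ the identity $e^{u} = |\Phi'|/|\Phi|$ combined with the Koebe bound $|\Phi| \le 4$ from Lemma~\ref{lem:koebe} gives $|\Phi'|^p \le 4^p e^{pu}$, and integrating yields \eqref{eqn:gaier}. The only real obstacle is the well-definedness of $F$; once that is in place the estimate is essentially the standard $H^p$ computation for nearly circular conformal maps.
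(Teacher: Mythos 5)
Your proof is correct and follows essentially the same route as the paper: the auxiliary function $F=\log(w\Phi'/\Phi)$, the mean value identity $1=\tfrac1{2\pi}\int_0^{2\pi}e^{pu}\cos(p\gamma)\,dt$, the pointwise bound $\cos(p\gamma)\ge\cos(p\n\gamma_{L^\infty})$, and the Koebe estimate $\abs\Phi\le4$. The only difference is that you spell out the single-valuedness of the logarithm (non-vanishing of $\Phi$ via $0\in D$, and the winding-number argument), details the paper leaves implicit.
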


Next we need to obtain bounds for $\Omega\bar{ \phi } + \tfrac 12
\Ca(\phi) \bar\phi$ which, unlike those used in
Lemma~\ref{lem:extrareg}, do not depend on $C^{1+\alpha}$ bounds for
$\phi$. We will accomplish this by using Lemma~\ref{lem:stream} and
the following elliptic estimate for the relative stream function
$\Psi$.

\begin{lemma}[Basic elliptic estimate]\label{lem:ellipbasic}
  Let $(\phi-w,\Omega) \in \soln$ and fix $\beta \in (0,1)$. Then there
  exists a constant $C$ depending only $\beta$ so that the
  corresponding relative stream function $\Psi$ and vortex patch $D$
  satisfy
  \begin{align*}
    \n{\dell_z\Psi}_{C^\beta(\overline{D})} < C\abs{1-2\Omega}.
  \end{align*}
  \begin{proof}
    By Lemma~\ref{lem:koebe}, we know that
    $\n\phi_{L^\infty(\T)} \le 4$ and hence that $D \sub B_4$. By
    \eqref{eqn:Psi}, we know that $\Psi$ satisfies
    \begin{align*}
      \Delta \Psi = 1_D - 2\Omega \ina B_{10}
    \end{align*}
    in the sense of distributions, and that
    $\n{1_D-2\Omega}_{L^\infty(B_{10})} = \abs{1-2\Omega}$. Thus, for
    instance by \cite[Exercise~4.8]{gt}, we have
    \begin{align*}
      \n{\dell_z\Psi}_{C^{\beta}(B_5)} \le C(\beta) 
      \abs{1-2\Omega}.
    \end{align*}
    Since $\overline{D} \sub B_5$, this then implies the desired bound
    on $\n{\dell_z\Psi}_{C^\beta(\overline D)}$.
  \end{proof}
\end{lemma}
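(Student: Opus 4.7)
The plan is to apply a standard interior Hölder gradient estimate for Poisson's equation with merely bounded right-hand side, using the Koebe quarter theorem to confine all patches to a single fixed ball, so that the resulting constant depends only on $\beta$ and not on the particular solution.

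Concretely, Lemma~\ref{lem:koebe} gives $\n\phi_{L^\infty(\T)} \le 4$, hence $\overline D \subset \overline B_4$. Since $\Psi \in C^1(\C)$, no jump terms appear across $\dell D$, and $\Psi$ is a distributional solution of $\Delta \Psi = 1_D - 2\Omega$ on the fixed larger ball $B_{10}$ with right-hand side in $L^\infty(B_{10})$. A classical interior gradient Hölder estimate --- for example, Exercise~4.8 in Gilbarg--Trudinger --- then yields
\begin{equation*}
\n{\nabla \Psi}_{C^\beta(B_5)} \le C(\beta)\, \n{1_D - 2\Omega}_{L^\infty(B_{10})}
\end{equation*}
for any $\beta \in (0,1)$, with $C(\beta)$ depending only on $\beta$ and the fixed nested geometry. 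Critically, this estimate is interior with respect to $B_{10}$ and uses no information about $\dell D$ at all, which is precisely what allows the constant to remain uniform along the entire bifurcation curve. Restricting to $\overline D \subset \overline B_4 \subset B_5$ yields the bound on $\n{\dell_z\Psi}_{C^\beta(\overline D)}$.

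The main technical subtlety I expect is cleanly identifying $\n{1_D - 2\Omega}_{L^\infty(B_{10})}$ with $|1-2\Omega|$: inside $D$ the right-hand side is $1 - 2\Omega$, but outside $D$ it is $-2\Omega$, giving a naive bound of $\max(|1-2\Omega|, 2|\Omega|)$. One way around this is to work instead with the modified stream function $\tilde \Psi := \Psi + \tfrac{\Omega}{2}|z|^2$, which by \eqref{eqn:Psi:asym} satisfies $\nabla \tilde \Psi \to 0$ at infinity and $\Delta \tilde \Psi = 1_D$ globally, and then recombine via $\dell_z \Psi = \dell_z \tilde\Psi - \tfrac{\Omega}{2}\bar z$, handling the explicit quadratic shift separately. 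A similar trick absorbs any $\n{\Psi}_{L^\infty(B_{10})}$ term that a standard Schauder estimate might carry, which a priori grows with $|\Omega|$ rather than $|1-2\Omega|$.
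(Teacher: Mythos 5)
Your proof is essentially identical to the paper's: Koebe's quarter theorem confines $D$ to $B_4$, and the interior H\"older gradient estimate of Gilbarg--Trudinger Exercise~4.8 applied to $\Delta\Psi = 1_D - 2\Omega$ on the fixed ball $B_{10}$ gives the bound on $B_5 \supset \overline D$. The subtlety you flag is genuine --- the paper simply asserts $\n{1_D-2\Omega}_{L^\infty(B_{10})} = \abs{1-2\Omega}$, whereas this norm is really $\max(\abs{1-2\Omega},\,2\abs\Omega)$ --- and your workaround via $\tilde\Psi = \Psi + \tfrac{\Omega}{2}\abs z^2$ yields $C(1+\abs\Omega)$ rather than $C\abs{1-2\Omega}$, which is all that is actually used downstream (the lemma is only invoked for $\Omega$ in a bounded range, where boundedness of $\n{\dell_z\Psi}_{C^\beta(\overline D)}$ suffices).
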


With the above lemmas in place, we can now establish the desired
bound for $\n\phi_{C^{1+\alpha}}$. Our hypothesis will be that the
first and third terms in \eqref{eqn:global:blowup} are controlled,
i.e.
\begin{align}
  \label{eqn:violate}
  \left\|\arg  \frac{w\phi'}{\phi}\right\|_{L^\infty} < \frac \pi 2 - \delta,
  \qquad 
  \inf_\T \abs{\Omega\bar{ \phi } + \tfrac 12 \Ca(\phi) \bar\phi} > \delta.
\end{align}

\begin{lemma}[Control of $\n\phi_{C^{1+\alpha}}$]\label{lem:uniform}
  Let $(\phi-w,\Omega) \in \soln$ with $\abs\Omega \le 10$ and
  suppose \eqref{eqn:violate} holds for some $\delta > 0$. Then there
  exists $C$ depending only on $\delta$ so that $\n
  \phi_{C^{1+\alpha}} < C$.
  \begin{proof}
    In what follows we use $C$ to denote any positive constant
    depending only on $\delta$. From Lemma~\ref{lem:gaier} we know
    that there exists $p > 1$ and depending only on $\delta$ such that
    $\n{\phi'}_{L^p} < C$. From Lemma~\ref{lem:koebe} we have
    $\n\phi_{L^\infty} < 4$, so Sobolev embedding gives
    $\n\phi_{C^\sigma} < C$ for some $\sigma$ depending on $p$. With
    $\beta \in (0,1)$ arbitrary but fixed, we also know by
    Lemma~\ref{lem:ellipbasic} that $\n{\dell_z\Psi}_{C^\beta(\overline D)} < C$.
   
    We now write $\F(\phi-w,\Omega) = 0$ as the Riemann--Hilbert
    problem $\Im(A\phi') = 0$, where by Lemma~\ref{lem:stream}
    \begin{align*}
      A :=  
      (\Omega\bar{ \phi } + \tfrac 12 \Ca(\phi) \bar\phi)w
      = 2\dell_z\Psi(\phi(w))w.
    \end{align*}
    Thanks to \eqref{eqn:violate}, we have 
    \begin{align*}
      \abs A =  \abs{\Omega\bar{ \phi } + \tfrac 12 \Ca(\phi) \bar\phi}
      = 2\abs{\dell_z\Psi(\phi(w))} > \delta.
    \end{align*}
    Lemmas~\ref{lem:rh} and \ref{lem:winding} yield the identity
    \begin{align}
      \label{eqn:fixedpointmaybe}
      \phi'(w) &=
      \exp \left\{
        \frac w{2\pi} \int_\T
        \frac 1{\tau-w}
        \left[
          \frac 1\tau \arg\bigg( \frac{\dell_z\Psi(\phi(\tau))\tau}{\bar{\dell_z\Psi(\phi(\tau)) \tau}}\bigg)
        -
          \frac 1w\arg\bigg( \frac{\dell_z\Psi(\phi(w))w}{\bar{\dell_z\Psi(\phi(w)) w}}\bigg)
        \right]
         d\tau \right\}.
    \end{align}
    From $\n \phi_{C^\sigma} < C$ and
    $\n{\dell_z\Psi}_{C^\beta(\overline D)} < C$, we get
    $\n{\dell_z\Psi \circ \phi}_{C^{\sigma\beta}(\T)} < C$. Since
    $\abs{\dell_z \Psi} > \delta/2$, it is then straightforward to
    show that 
    \begin{align*}
      \n[\bigg]{
        \frac 1\tau \arg\bigg( \frac{\dell_z\Psi(\phi(\tau))\tau}{\bar{\dell_z\Psi(\phi(\tau)) \tau}}\bigg)
      }_{C^{\sigma\beta}} 
      < C.
    \end{align*}
    The Cauchy integral is a bounded operator from $C^{\sigma\beta} \to
    C^{\sigma\beta}$, and so after composing with the exponential we
    obtain $\n{\phi'}_{C^{\sigma\beta}} < C$.

    In particular, we now know that $\n\phi_{C^{\sqrt\alpha}} < C$,
    and so we can repeat the above argument with $\sigma = \beta =
    \sqrt\alpha$ to obtain $\n{\phi'}_{C^\alpha} < C$ as desired.
  \end{proof}
\end{lemma}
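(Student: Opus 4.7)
\textbf{Proof proposal for Lemma~\ref{lem:uniform}.} The plan is to exploit the explicit Riemann--Hilbert representation of $\phi'$ furnished by Lemma~\ref{lem:rh}\ref{lem:rh:fund}, using the two hypotheses \eqref{eqn:violate} to control the coefficient $A$ appearing there, and then bootstrap from a weak Sobolev bound all the way up to $C^{1+\alpha}$.

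First I would extract initial uniform bounds. Lemma~\ref{lem:koebe} immediately gives $\|\phi\|_{L^\infty}\le 4$, independently of everything else. The angle hypothesis $\|\arg(w\phi'/\phi)\|_{L^\infty}<\pi/2-\delta$ is exactly what is needed to feed into Lemma~\ref{lem:gaier}: choosing $p>1$ so that $p(\pi/2-\delta)<\pi/2$ (e.g.\ $p=1+c\delta$ for a small $c$), this yields $\|\phi'\|_{L^p(\T)}\le C(\delta)$. Morrey's embedding then promotes this to $\|\phi\|_{C^\sigma(\T)}\le C(\delta)$ for some $\sigma=\sigma(\delta)>0$.

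Next I would get Hölder control on $\partial_z\Psi$ at the patch. Since $\|\phi\|_{L^\infty}\le 4$ we have $D\subset B_4$, so Lemma~\ref{lem:ellipbasic} (applied with any fixed $\beta\in(0,1)$, say $\beta=1/2$) gives $\|\partial_z\Psi\|_{C^\beta(\overline D)}\le C|1-2\Omega|\le C$ since $|\Omega|\le 10$. Composing, the pull-back $\partial_z\Psi\circ\phi$ lies in $C^{\sigma\beta}(\T)$ with a bound depending only on $\delta$. Now Lemma~\ref{lem:stream} identifies $A=(\Omega\bar\phi+\tfrac12\Ca(\phi)\bar\phi)w=2(\partial_z\Psi)\circ\phi\cdot w$, so by the second hypothesis in \eqref{eqn:violate}, $|A|\ge\delta$. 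Thus $A/\bar A$ is a unimodular $C^{\sigma\beta}$ function bounded away from $0$, and by Lemma~\ref{lem:winding} it has winding number zero, so $\theta:=\arg(A/\bar A)$ is a well-defined $C^{\sigma\beta}(\T)$ function with a $C^{\sigma\beta}$ norm controlled by $\delta$.

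Now I would invoke the explicit formula for $\phi'=g_0'$ in Lemma~\ref{lem:rh}\ref{lem:rh:fund}. The integrand is the Cauchy-type kernel $(\tau^{-1}\theta(\tau)-w^{-1}\theta(w))/(\tau-w)$, and the standard fact that the Cauchy integral maps $C^{\sigma\beta}(\T)$ to itself boundedly (a special case of Theorem~\ref{thm:cauchy}) followed by composition with the real-analytic exponential yields $\|\phi'\|_{C^{\sigma\beta}(\T)}\le C(\delta)$, hence $\phi\in C^{1+\sigma\beta}(\T)$ with the same quantitative bound. The key feature is that the right-hand side of the representation involves only $\theta$, which is now quantitatively controlled. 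Finally, one iterates: with $\phi\in C^{1+\sigma\beta}$ we may repeat the argument taking $\sigma$ and $\beta$ larger (for instance $\sigma=\beta=\sqrt\alpha$ in the next step), and after finitely many iterations we reach $\|\phi'\|_{C^\alpha}\le C(\delta)$, completing the proof.

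The main obstacle is the composition/bootstrap step: at the initial stage we only have Hölder exponent $\sigma$ determined by the Sobolev exponent $p$ from Lemma~\ref{lem:gaier}, and $\sigma$ may be much smaller than $\alpha$. One must therefore verify that the Riemann--Hilbert formula genuinely \emph{gains} a derivative from $\theta\in C^{\sigma\beta}$ to $\phi'\in C^{\sigma\beta}$, and that iterating this gain eventually produces the target regularity $\alpha$ independently of the initial $\sigma$. The gain follows from the boundedness of the Cauchy integral on Hölder spaces, which is clean once the nondegeneracy $|A|\ge\delta$ is in force; without the lower bound in \eqref{eqn:violate} the argument would collapse, since one could not even define $\theta$.
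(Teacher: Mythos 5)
Your proposal is correct and follows essentially the same route as the paper: Koebe and Gaier give the initial $L^p$/H\"older control, Lemma~\ref{lem:ellipbasic} controls $\partial_z\Psi$, and the explicit Riemann--Hilbert formula plus boundedness of the Cauchy integral on H\"older spaces yields $\phi'\in C^{\sigma\beta}$, which is then iterated. The only point worth sharpening is that the iteration terminates after exactly two steps (not an unspecified finite number): once $\phi'\in C^{\sigma\beta}$ you have $\phi\in C^{1+\sigma\beta}\subset C^{\sqrt\alpha}$, and since $\beta$ in Lemma~\ref{lem:ellipbasic} is free you may take $\sigma=\beta=\sqrt\alpha$ in the second pass to land directly on $C^\alpha$ --- which is the choice you already indicate.
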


\subsection{Other bounds}

We next turn to the other terms in \eqref{eqn:global:blowup}. First we
establish control over $\Omega$ by using the nonexistence results of
Hmidi~\cite{hmidi:trivial} and Fraenkel~\cite{fraenkel:book} together
with our result Proposition~\ref{prop:noloop} on nodal properties:
\begin{lemma}[Control of $\Omega$]\label{lem:omega}
  Along $\cm$, $0 < \Omega < 1/2$.
  \begin{proof}
    Suppose for the sake of contradiction that there exists a solution
    $(f,\Omega) \in \cm$ with $\Omega = 1/2$ or $\Omega = 0$. Then
    by \cite{hmidi:trivial} or Fraenkel~\cite{fraenkel:book} (as cited
    in \cite{hmidi:trivial}), we must have $f \equiv 0$,
    i.e.~$(f,\Omega) \in \triv$. But then $(f,\Omega) \notin
    \nodal$, contradicting Proposition~\ref{prop:noloop}.
  \end{proof}
\end{lemma}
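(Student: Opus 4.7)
The plan is a short proof by contradiction that combines the continuity of the parameter $\Omega$ along $\cm$ with two rigidity theorems from the literature. First I would note that the local curve $\cm_\loc$ issues from $(0,\Omega_m)$ with $\Omega_m = (m-1)/(2m)$, which lies strictly in $(0,1/2)$ for every $m\ge 2$. Since $s \mapsto \tilde\Omega(s)$ is continuous on $(0,\infty)$ by Theorem~\ref{thm:global}, the intermediate value theorem shows that if $\tilde\Omega$ ever escapes the interval $(0,1/2)$ it must first pass through one of its endpoints. The lemma therefore reduces to showing that no point of $\cm$ satisfies $\Omega=0$ or $\Omega=\tfrac12$.

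Suppose for contradiction that there is $(f,\Omega)\in\cm$ with $\Omega=\tfrac12$. The rigidity result of Hmidi~\cite{hmidi:trivial} asserts that at this critical angular velocity the only rotating vortex patch (satisfying our standing regularity and decay conditions) is a disk, so $f\equiv 0$ and hence $(f,\Omega)\in\triv$. If instead $\Omega=0$, the patch is stationary and by the theorem of Fraenkel~\cite{fraenkel:book} (cited in this form in~\cite{hmidi:trivial}) the region $D$ must again be a disk, so $(f,\Omega)\in\triv$. But Proposition~\ref{prop:noloop} gives $\cm\subset\nodal\subset\soln\without\triv$, so $\cm$ is disjoint from the trivial axis, producing the desired contradiction and, together with the continuity argument of the previous paragraph, proving that $\tilde\Omega$ stays in $(0,1/2)$ along all of $\cm$.

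There is not much of a substantive obstacle here, since the lemma is essentially a bookkeeping synthesis of results already established. The one thing worth checking carefully is that the solutions on $\cm$ meet the regularity and geometric hypotheses under which Hmidi's and Fraenkel's rigidity theorems are stated: simple-connectedness and decay of the velocity field at infinity are built into the formulation~\eqref{eqn:Psi}, while the analyticity (in particular $C^1$-regularity) of $\dell D$ is furnished by Corollary~\ref{cor:freereg}. With those verifications in place the cited theorems apply verbatim and the proof is complete.
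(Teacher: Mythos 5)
Your proof is correct and follows essentially the same route as the paper: rule out $\Omega=0$ and $\Omega=1/2$ via the rigidity theorems of Hmidi and Fraenkel, which would force $(f,\Omega)\in\triv$ and contradict $\cm\sub\nodal\sub\soln\without\triv$ from Proposition~\ref{prop:noloop}. Your explicit continuity/intermediate-value step (left implicit in the paper) and the verification of the rigidity theorems' hypotheses are both sound additions.
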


Finally, we bound the two remaining quantities in
\eqref{eqn:global:blowup} in terms of the first and third.
\begin{lemma}[Remaining bounds on $\phi$]\label{lem:remaining}
  Let $(\phi-w,\Omega) \in \soln$, and suppose that
  \eqref{eqn:violate} holds for some $\delta > 0$. Then there exists a
  constant $C > 0$ depending only $\delta$ such that 
  \begin{align*}
    \abs{\phi'}
    ,\, 
    \abs{\phi} 
    \ge \frac 1C.
  \end{align*}
  \begin{proof}
    By Lemmas~\ref{lem:uniform} and \ref{lem:omega}, we have
    $\n\phi_{C^{1+\alpha}} < C$, where here and in what follows $C$ is
    a positive constant whose value may change from line to line but
    which depends only on $\delta$. 
    To get the lower
    bound on $\abs{\phi'}$, we take the multiplicative inverse of
    \eqref{eqn:fixedpointmaybe} and use our bounds on
    $\n\phi_{C^\alpha}$. Arguing as in the proof of
    Lemma~\ref{lem:uniform} we find that $\n{1/\phi'}_{C^\alpha} < C$
    and hence that $\min_\T \abs{\phi'} > 1/C$. 

    To get the lower bound on $\abs \phi$, we first get a lower bound
    on $\n\phi_{L^\infty}$ using the Schwarz lemma: Since the function
    $g(w) = \Phi(w)/w$ is holomorphic at infinity with $g(\infty) = 1$, we
    have by the maximum modulus principle that 
    \begin{align}
      \label{eqn:schwarz}
      \n \phi_{L^\infty(\T)} &= \n g_{L^\infty(\C\without D)} > 1.
    \end{align}
    Next we note that
    \begin{align*}
      \log \frac{\min_\T \abs\phi}{\max_\T \abs\phi}
      = \log \left| \frac{\phi(\pi/m)}{\phi(0)} \right|
      =-\Re \int_0^{\pi/m} \frac d{dt} \log \phi(e^{it})\, dt 
      = \int_0^{\pi/m} \Im \frac{e^{it} \phi'(e^{it})}{\phi(e^{it})}\, dt.
    \end{align*}
    Estimating this integral as in \eqref{eqn:combineme}, we obtain
    \begin{align*}
      \log \frac{\min_\T \abs\phi}{\max_\T \abs\phi}
      \le \int_0^{\pi/m} \frac{\abs{\phi'(e^{it})}}{\abs{\phi(e^{it})}}\, dt
      \le \frac{2\pi}{\cos(\pi/2-\delta)}.
    \end{align*}
    Taking exponentials gives
    \begin{align*}
      \min_\T \abs\phi \ge \max_\T \abs\phi 
    \exp\left( -
      \frac{2\pi}{\cos(\pi/2-\delta)}\right) 
      > 
    \exp\left( -
      \frac{2\pi}{\cos(\pi/2-\delta)}\right) > 1/C,
    \end{align*}
    where in the second-to-last inequality we have used
    \eqref{eqn:schwarz}.
  \end{proof}
\end{lemma}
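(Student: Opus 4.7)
The plan is to obtain both lower bounds by starting from the $C^{1+\alpha}$ control on $\phi$ furnished by Lemma~\ref{lem:uniform} (together with the uniform bound $0 < \Omega < 1/2$ from Lemma~\ref{lem:omega}), and then exploiting the explicit Riemann--Hilbert representation \eqref{eqn:fixedpointmaybe} for $\phi'$ and a conformal-mapping oscillation estimate. Throughout, $C$ denotes a constant depending only on $\delta$.

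For the lower bound on $\abs{\phi'}$, I would reuse the Hölder estimate from the proof of Lemma~\ref{lem:uniform}, but applied to the reciprocal. Taking the multiplicative inverse of \eqref{eqn:fixedpointmaybe} just flips the sign inside the exponential, so exactly the same ingredients---the $C^\alpha$ control on $\dell_z\Psi \circ \phi$ coming from Lemma~\ref{lem:ellipbasic} together with $\n\phi_{C^{1+\alpha}} < C$, the lower bound $\abs{\dell_z\Psi \circ \phi} > \delta/2$ coming from \eqref{eqn:violate}, and the boundedness of the Cauchy integral on Hölder spaces---yield $\n{1/\phi'}_{C^\alpha} < C$. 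In particular $\min_\T \abs{\phi'} > 1/C$.

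For the lower bound on $\abs \phi$, the first step is to show $\max_\T \abs \phi \ge 1$. The function $g(w) := \Phi(w)/w$ is holomorphic on $\C \without \overline \D$, continuous up to $\T$, and satisfies $g(\infty) = 1$; viewing its domain as a neighborhood of $\infty$ on the Riemann sphere, the maximum modulus principle rules out $\max_\T \abs g < 1$, so $\max_\T \abs\phi \ge 1$. The second step is to bound the oscillation of $\log\abs\phi$. Writing $\phi(e^{it}) = \rho(t) e^{i\vartheta(t)}$ as in Lemma~\ref{lem:polar}, we have $\rho'/\rho = -\Im(e^{it}\phi'/\phi)$, so direct integration yields
\[
  \abs[\Big]{\log\frac{\min_\T\abs\phi}{\max_\T\abs\phi}} \le \int_0^{2\pi} \frac{\abs{\phi'(e^{it})}}{\abs{\phi(e^{it})}}\, dt.
\]
To estimate the right-hand side, I would run the residue computation underlying Lemma~\ref{lem:gaier} with $p=1$: setting $F(w) = \log(w\Phi'(w)/\Phi(w)) = u(w) + i\gamma(w)$, the identity $F(\infty) = 0$ gives $\int_0^{2\pi} e^{u(e^{it})}\cos\gamma(e^{it})\, dt = 2\pi$, and since \eqref{eqn:violate} forces $\cos\gamma \ge \sin\delta > 0$, this yields $\int_0^{2\pi} \abs{\phi'}/\abs\phi\, dt \le 2\pi/\sin\delta$. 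Combining with $\max_\T\abs\phi \ge 1$ gives $\min_\T\abs\phi \ge \exp(-2\pi/\sin\delta) > 1/C$.

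The main obstacle is the bound on $\abs\phi$: unlike $\abs{\phi'}$, there is no direct pointwise representation of $\phi$ itself via a Riemann--Hilbert exponential, so one must combine a one-sided maximum modulus bound on $\max_\T\abs\phi$ with a logarithmic oscillation estimate. The oscillation estimate is precisely where the tangent-angle assumption in \eqref{eqn:violate} enters, through the residue identity already used in Lemma~\ref{lem:gaier}.
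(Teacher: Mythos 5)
Your proposal is correct and follows essentially the same route as the paper: the reciprocal of the Riemann--Hilbert exponential for $\min_\T\abs{\phi'}$, the maximum modulus principle applied to $\Phi(w)/w$ for $\max_\T\abs\phi\ge 1$, and the $p=1$ residue identity from Lemma~\ref{lem:gaier} (note $\cos(\pi/2-\delta)=\sin\delta$, so your bound on $\int\abs{\phi'}/\abs\phi\,dt$ is identical to the paper's) to control the oscillation of $\log\abs\phi$. The only cosmetic difference is that you integrate over all of $[0,2\pi]$ rather than $[0,\pi/m]$, which if anything avoids an implicit appeal to the monotonicity of $\abs\phi$ in $t$.
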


\subsection{Proof of Theorem~\ref{thm:informal}}

We are now ready to prove our main result, Theorem~\ref{thm:informal}.
\begin{proof}[Proof of Theorem~\ref{thm:informal}]
  Conclusion \ref{thm:informal:bif} of Theorem~\ref{thm:informal} is
  immediate from the construction thus far: Theorem~\ref{thm:global}
  constructed $\cm$ as an extension of $\cm_\loc$, which indeed starts
  at the circular patch $(0,\Omega_m)$; see
  Theorem~\ref{thm:locbif}\ref{thm:locbif:bif} and
  Definition~\ref{def:loc}. As mentioned at the start of
  Section~\ref{sec:nodal}, the conclusion \ref{thm:informal:nodal} of
  the theorem is implied by \eqref{eqn:n:glob}, which holds by
  Proposition~\ref{prop:noloop}. Since \ref{thm:informal:reg}
  follows from Corollary~\ref{cor:freereg}, it therefore remains to
  show \ref{thm:informal:vanish}.

  By Proposition~\ref{prop:noloop},
  alternative~\ref{thm:global:loop} in Theorem~\ref{thm:global} does
  not occur. Therefore alternative~\ref{thm:global:blowup} occurs,
  that is 
  \begin{align*}
    \min\left\{
      \min_\T \abs[\big]{ \tilde\Omega\bar{\tilde \phi } + \tfrac 12 \Ca(\tilde\phi) \bar{\tilde\phi}}
      ,\,
    \frac 1{1+\abs{\tilde\Omega}+\n{\tilde \phi}_{C^{1+\alpha}}},\,
    \frac \pi 2 - \max_\T \bigg| \arg \frac{w\tilde \phi'}{\tilde\phi} \bigg|,\,
    \min_\T \abs{\tilde\phi'},\,
    \min_\T \abs{\tilde\phi} \right\} \longrightarrow 0
  \end{align*}
  as $s \to \infty$. Applying Lemmas~\ref{lem:uniform},
  \ref{lem:omega}, and \ref{lem:remaining}, we see that this implies
  the simpler condition
  \begin{align}
    \label{eqn:nearlythere}
    \min\left\{
      \min_\T \abs[\big]{ \tilde\Omega\bar{\tilde \phi } + \tfrac 12 \Ca(\tilde\phi) \bar{\tilde\phi}}
    ,\,
    \frac \pi 2 - \max_\T \bigg| \arg \frac{w\tilde \phi'}{\tilde\phi} \bigg|
    \right\} \longrightarrow 0
  \end{align}
  as $s \to \infty$.
  Letting $\tilde\Psi(s)$ be the relative stream function and $\tilde
  D(s)$ the vortex patch associated to $(\tilde f(s),\tilde
  \Omega(s))$, we claim that 
  \begin{align*}
    0 < \min_{\dell\tilde D} \tilde \Psi_r \to 0
  \end{align*}
  as $s \to \infty$. The left inequality is just a restatement of the
  nodal property \eqref{eqn:n:r}. Suppose for the sake of
  contradiction that $\tilde \Psi_r > \varepsilon$ on $\dell \tilde D$
  along some subsequence $s_n \to \infty$ for some $\varepsilon > 0$.
  Then the first term in \eqref{eqn:nearlythere} is
  $\abs{\grad\tilde\Psi}^2/4 \ge \varepsilon^2/4$ (see
  \eqref{eqn:Psizw} or \eqref{eqn:notboth}), and so
  \begin{align}
    \label{eqn:cannothappen}
    \max_\T\bigg|\arg \frac {w\tilde\phi'}{\tilde\phi}\bigg| \to \frac \pi 2
    \asa n \to \infty.
  \end{align}
  Differentiating $\tilde \Psi \circ \tilde \phi \equiv 0$ as in
  \eqref{eqn:difft} and using Lemma~\ref{lem:polar}, we see that 
  \begin{align*}
    \abs{\tilde\phi} \tan\bigg(\arg \frac
    {w\tilde\phi'}{\tilde\phi}\bigg)
    = \frac
    {\abs\phi\Im(w\tilde\phi'/\tilde\phi)}
    {\Re(w\tilde\phi'/\tilde\phi)}
    = -\frac{\tilde\rho'}{\tilde \vartheta'}
    = \frac{\tilde\Psi_\theta}{\tilde\Psi_r},
  \end{align*}
  and hence that
  \begin{align*}
    \bigg\|\arg \frac
    {w\tilde\phi'}{\tilde\phi}\bigg\|_{L^\infty(\T)}
    = 
    \tan^{-1}\bigg\|\frac{\tilde\Psi_\theta/r}{\tilde\Psi_r}
    \bigg\|_{L^\infty(\dell\tilde D)}
    \le 
    \tan^{-1}\Big(\varepsilon^{-1}\n{\tilde\Psi_\theta/r}_{L^\infty(\dell\tilde D)}\Big)
  \end{align*}
  for all $n$. Therefore the only way for \eqref{eqn:cannothappen} to
  occur is if $\n{\tilde\Psi_\theta/r}_{L^\infty(\dell\tilde D)} \to
  \infty$. But by Lemma~\ref{lem:ellipbasic}
  $\n{\dell_z\tilde\Psi}_{C^{1/2}(\tilde D)}$, say, is bounded along
  $\cm$, and hence this is impossible.
\end{proof}

\section{Numerical streamline patterns}\label{sec:numerical}

In this section we numerically calculate global branches of rotating
vortex patches with $m=3,4,5,6$. As mentioned in the introduction,
similar branches have previously been calculated
in~\cite{dz:vstates,woz:numerical,overman:limiting}, with the striking
conclusion that there are limiting solutions with sharp \ang{90}
corners. Our contribution is that we additionally calculate the full
stream function $\Psi$; the results suggest that the qualitative
features in Theorem~\ref{thm:phase} persist along the whole branch
(Conjecture~\ref{conj:nodal}) and indeed are related to the formation
of sharp corners.

\subsection{Numerical method}

We approximate the trace $\phi$ of the conformal mapping $\Phi$ by a
Fourier series with $M$ modes:
\begin{align*}
  \phi(e^{it}) \approx e^{it} + \sum_{n=1}^{M} a_n e^{-in(m-1)t},
\end{align*}
where $a_1,\ldots,a_M$ are real. This is in line with the
normalization $\Phi'(\infty) = 0$ in Section~\ref{sec:space}. With an
(inverse) fast Fourier transform, the values of $\phi(e^{it})$ and
$\phi'(e^{it})$ are then approximated at $N > mM$ evenly spaced values
$t_1,\ldots,t_N$. These physical grid points $z_n = \phi(e^{it_n})$
become denser in the regions where $\dell D$ has the high curvature.
The integral appearing in \eqref{eqn:phi} is then approximating with
the trapezoid rule,
\begin{align*}
  \int_\T 
  \frac{\bar{\phi(\tau)}-\bar{\phi(e^{it_n})}} { \phi(\tau) -
  \phi(e^{it_n})} \phi'( \tau)\, d\tau 
  \approx
  \bar{ie^{it}\phi'(e^{it})} \frac{2\pi}N + 
  \sum_{k\ne n}
  \frac{\bar{\phi(e^{it_k})}-\bar{\phi(e^{it_n})}} { \phi(e^{it_k}) -
  \phi(e^{it_n})} \phi'(e^{it_k})  \frac{2\pi ie^{it_k}}N,
\end{align*}
where we have evaluated the integrand at $\tau = e^{it_n}$ by
calculating the limit
\begin{align*}
  \lim_{t \to s} 
  \frac{\bar{\phi(e^{is})}-\bar{\phi(e^{it})}} { \phi(e^{is}) -
  \phi(e^{it_n})} \phi'( e^{is})
  = -\bar{\phi'(e^{is})}{e^{2is}}.
\end{align*}
Substituting these approximations into \eqref{eqn:F} yields an
approximation of $\F(\phi-w,\Omega)(e^{it})$ for $t=t_1,\ldots,t_N$.
Taking a fast Fourier transform, we obtain
\begin{align*}
  \F(\phi-w,\Omega)(e^{it}) \approx \Im \sum_{n=1}^M b_n e^{inmt}
\end{align*}
for real coefficients $b_1,\ldots,b_M$. This process defines a
finite-dimensional mapping
\begin{align*}
  \F^M \maps \R^M \by \R \to \R^M,
  \qquad 
  \F^M(a_1,\ldots,a_M;\Omega) = (b_1,\ldots,b_M),
\end{align*}
whose roots correspond to rotating vortex patches.

We find roots of $\F^M$ by using a standard Newton--Krylov scheme.
This is a Newton-type method in which the action of the Jacobian
matrix $\F^M_a(a;\Omega)$ of $\F^M$ is approximated by
\begin{align*}
  \F^M_a(a;\Omega)\alpha \approx \frac{\F^M(a+\varepsilon 
  \alpha;\Omega)-\F^M(a;\Omega)}\varepsilon
\end{align*}
so that the action of the inverse matrix can in turn be
approximated using an iterative LGMRES method. Fixing $\delta > 0$, we
seek solutions at the discrete frequencies $\Omega^k = \Omega_m -
k\delta$. We begin tracing out the branch by computing a solution
$a^1$ of $\F^M(a^1;\Omega^1) = 0$ using $a^1=a^0:=(\sqrt\delta,0,\ldots,0)$ as
our initial guess. We then solve $\F^M(a^2;\Omega^2) = 0$ for $a^2$
using $a^1$ as an initial guess and so on. The process terminates when
too many Newton iterations are needed or else when the solutions are no
longer well-resolved by $M$ Fourier modes.

\subsection{Results}

\begin{table}
  \centering
  \begin{tabular}{r|cccc}
    Symmetry class $m$ & 3 & 4 & 5 & 6 \\ \hline
    Fourier modes $M$ & 1023 & 1023 & 511 & 255 \\
    Physical gridpoints $N$ & 6144 & 8192 & 5120 & 3072
  \end{tabular}
  \caption{Number of Fourier modes and gridpoints used for different
  values of $m$.}
  \label{fig:modes}
\end{table}
\begin{figure}
  \centering
  \includegraphics[scale=.635]{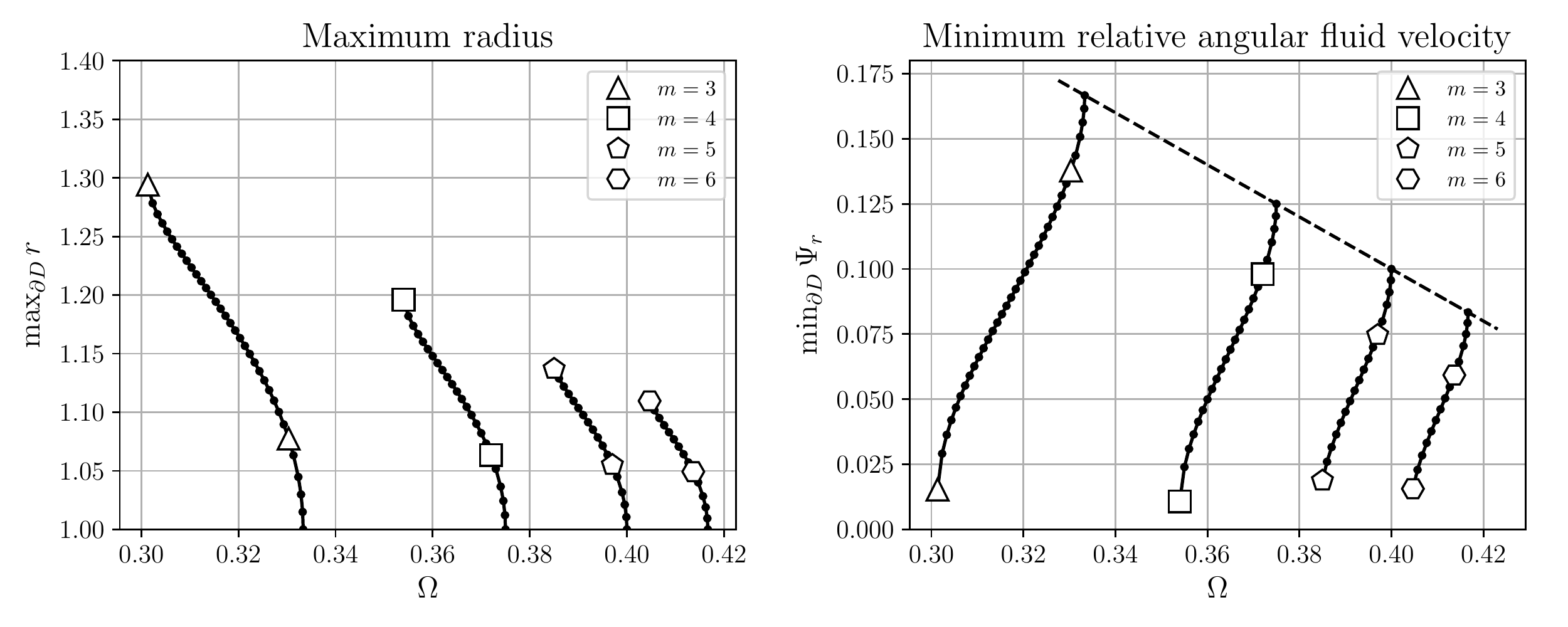}
  \caption{Maximum radius and minimum (relative) angular velocity
  along $\dell D$ for the numerically computed branches. The solutions
  with markers appear in Figures~\ref{fig:smallish} and
  \ref{fig:large}, and the dashed line is the analytical formula for
  trivial solutions.}
  \label{fig:branches}
\end{figure}
\begin{figure}
  \centering
  \includegraphics[width=.7\textwidth]{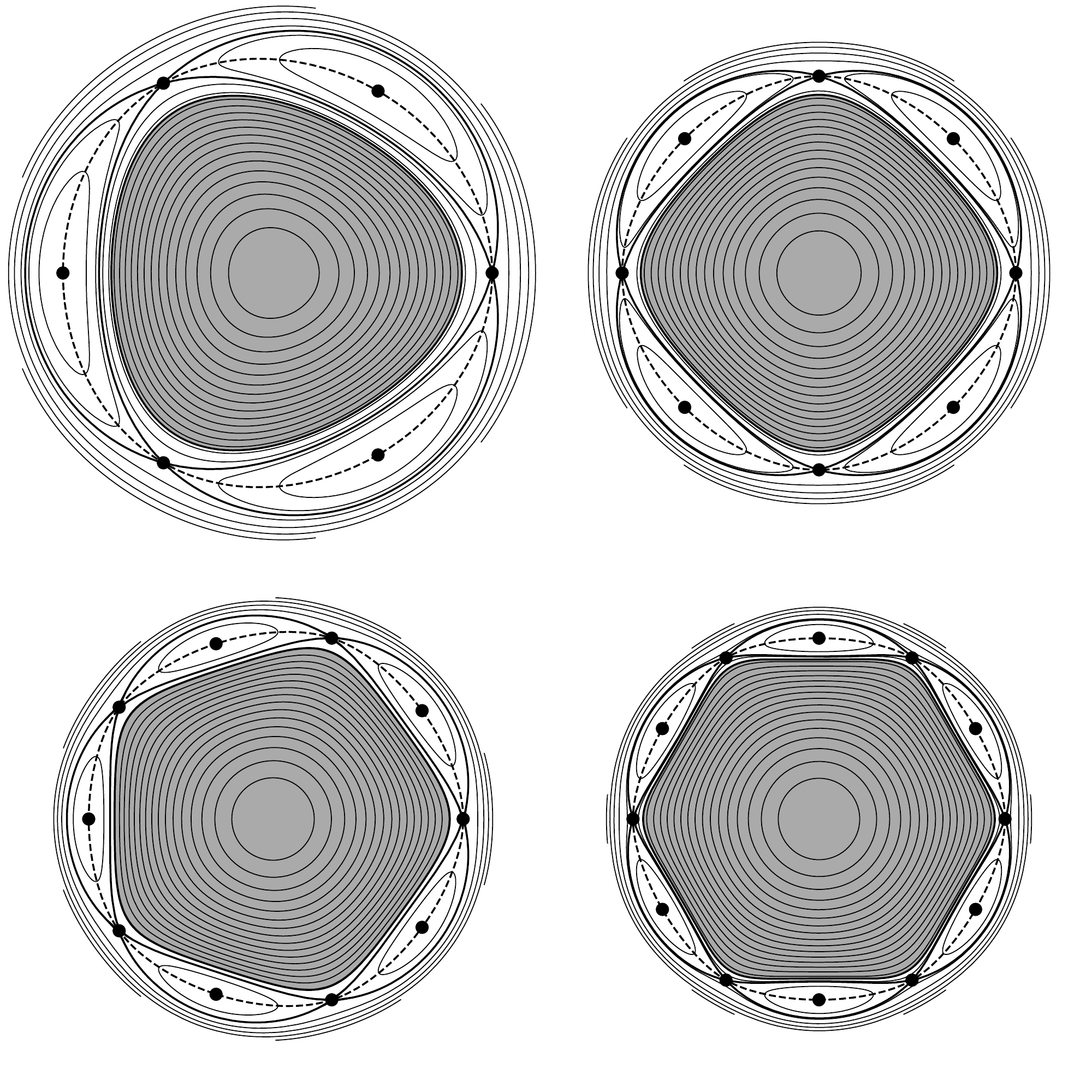}
  \caption{For each numerical branch in Figure~\ref{fig:branches},
  the level curves of the stream function $\Psi$ for the solution
  with the fifth-largest value of $\Omega$. The shaded region is the
  patch $D$, the dashed lines are the curves $\Psi_r = 0$ where there
  is no angular fluid velocity, and the markers are the critical
  points of $\Psi$ (besides the origin).
  }
  \label{fig:smallish}
\end{figure}
\begin{figure} 
  \centering
  \includegraphics[width=.7\textwidth]{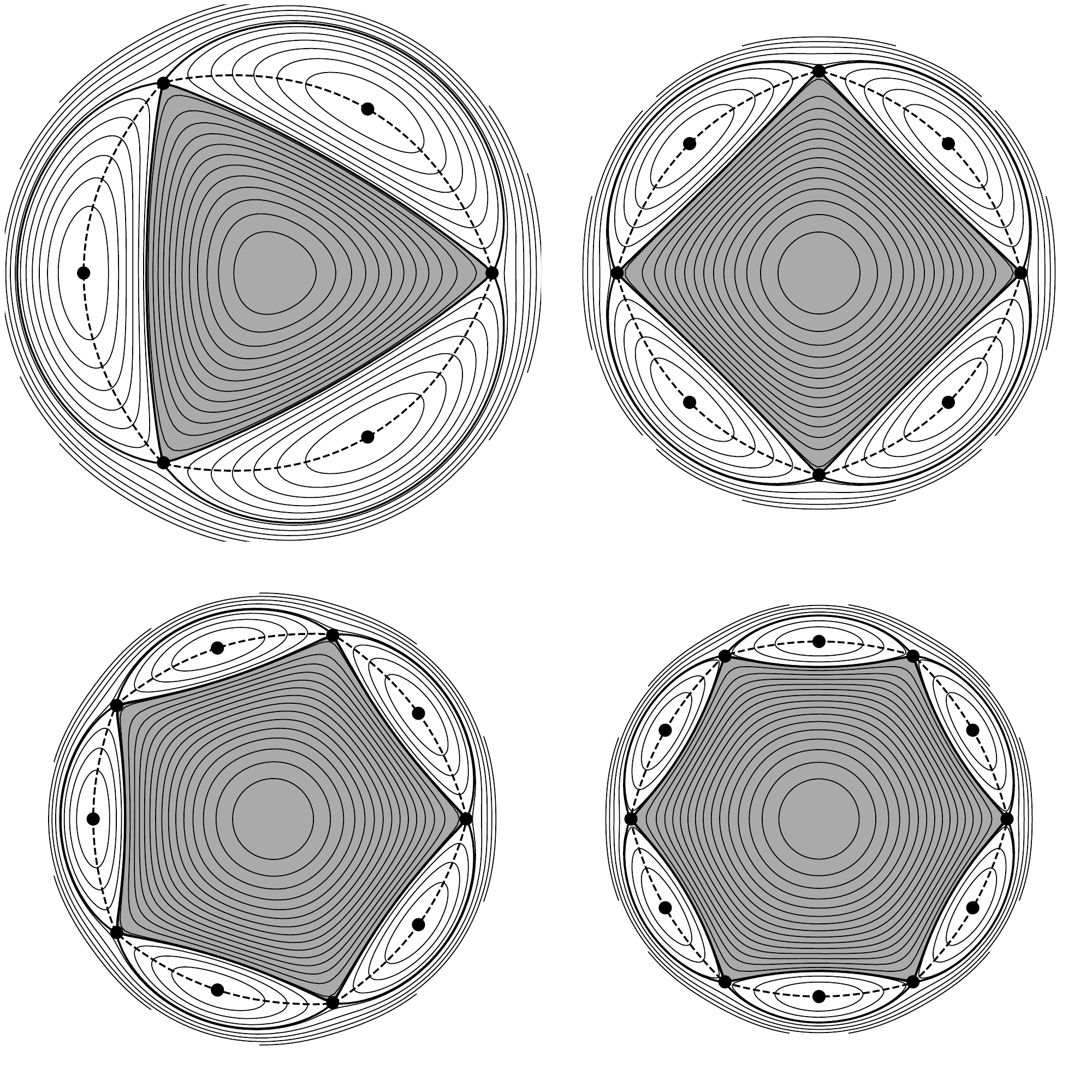}
  \caption{For each numerical branch in Figure~\ref{fig:branches}, the
  level curves of the stream function $\Psi$ for the solution with the
  smallest value of $\Omega$. The shaded region is the patch $D$, the
  dashed lines are the curves $\Psi_r = 0$ where there is no angular
  fluid velocity, and the markers are the critical points of $\Psi$
  (besides the origin). }
  \label{fig:large}
\end{figure}
We applied the above method with $m=3,4,5,6$, a grid spacing of
$\delta = 0.001$ in $\Omega$, and the numbers $M$ of Fourier
modes and $N$ of gridpoints given in Table~\ref{fig:modes}. To better
resolve the local bifurcation, we computed two additional solutions
near the start of each branch.

The maximum radii $\max_{\dell D} r = \n\phi_{L^\infty}$ are shown in
Figure~\ref{fig:branches}. As seen in previous work, the angular
frequency $\Omega$ appears to be decreasing along each branch while
the radius increases. Theorem~\ref{thm:informal} predicts that
$\min_{\dell D}\Psi_r$ should also limit to zero, and evidence of
this can indeed be seen in Figure~\ref{fig:branches}.

Now we turn to Conjecture~\ref{conj:nodal} on the level curves of the
relative stream function $\Psi$. Looking at the proof of
Theorem~\ref{thm:phase}, the conjecture is true provided the
inequalities
\begin{align*}
   \Psi_{r\theta}^- < 0 \ona S \cap \dell D,
   \quad
   \Psi^-_{r\theta\theta} < 0 \ona R \cap \dell D,
   \quad
    \Psi^-_{r\theta\theta} > 0 \ona L \cap \dell D
\end{align*}
and also
\begin{align*}
  \max_{\dell D} \left( (r\dell_r)^2\Psi^- + 2\Omega r^2 \right) &<
  \min_{\dell D} 2\Omega r^2
\end{align*}
hold for all solutions. Thanks to Lemma~\ref{lem:stream}, the above
quantities are readily calculated in terms of $\phi$, and the
inequalities do seem to hold for all of the solutions we computed.

\begin{figure}
  \centering
  \includegraphics[width=\textwidth]{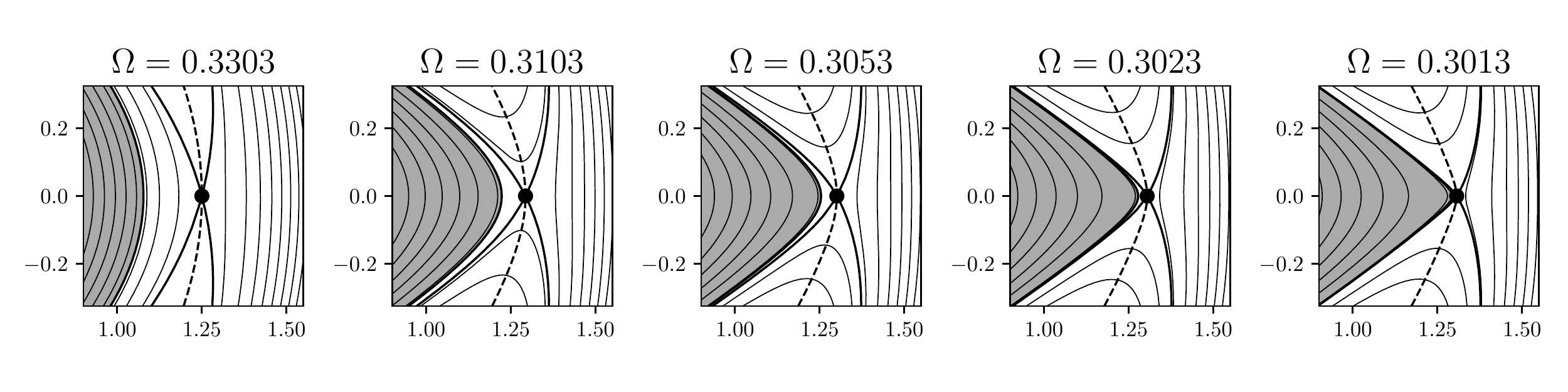}
  \caption{Streamline plots of several patches with $m=3$, magnified
  to show the approach of the saddle point to $\dell D$. As in
  Figures~\ref{fig:smallish} and \ref{fig:large}, the shaded region is
  $D$, the dashed line is the curve $\Psi_r = 0$, and the marked point
  is the saddle.}
  \label{fig:stagmove}
\end{figure}
\begin{figure} 
  \centering
  \includegraphics[scale=.635]{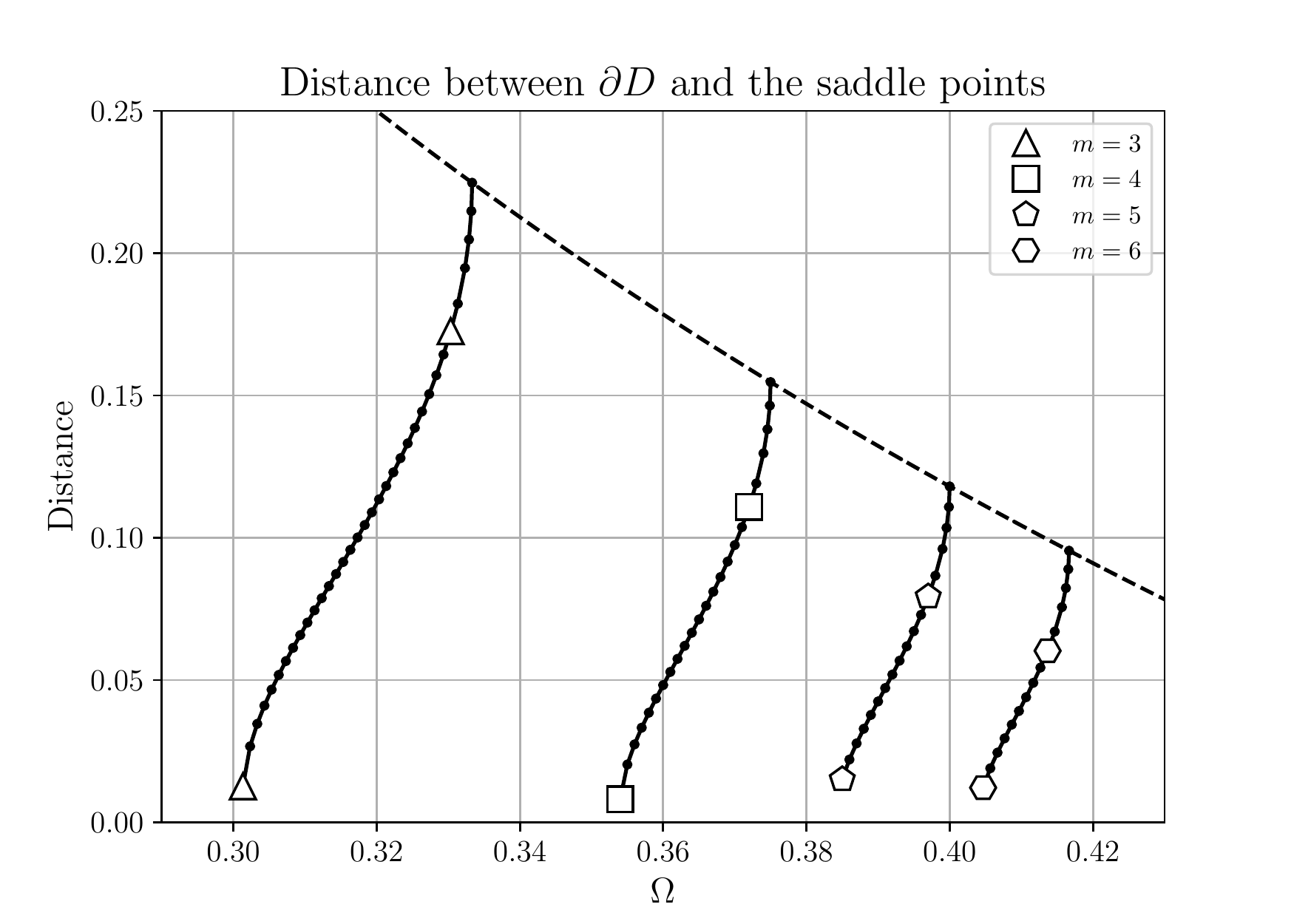}
  \caption{The distance between the closest saddle point and $\dell D$ for the
  numerical branches from Figure~\ref{fig:branches}. The dashed curve
  is the analytical formula for trivial solutions.}
  \label{fig:saddles}
\end{figure}
Using the formulas from Appendix~\ref{sec:stream}, we can also compute
and plot the full stream function $\Psi$ for any of our numerical
solutions. Level curves of $\Psi$ for several solutions are shown in
Figures~\ref{fig:smallish} and \ref{fig:large}. The solutions in
Figure~\ref{fig:smallish} are part of the way up our numerical
branches, while the solutions in \ref{fig:large} are at the very end.
As expected, all of these phase portraits have the qualitative
features from Theorem~\ref{thm:phase}. At the end of each branch,
the saddle points approach the boundary $\dell D$; see
Figures~\ref{fig:stagmove} and \ref{fig:saddles}. It would seem that,
in the limit, the saddle point coincides with the corner point on
$\dell D$, which in turn is made up of heteroclinics from
Theorem~\ref{thm:phase}.

\FloatBarrier
\appendix
\section{Appendix}
\subsection{Linear Riemann--Hilbert problems}\label{sec:rh}

In this section we prove Lemma~\ref{lem:rh} by recalling some basic
facts from the classical theory of Riemann--Hilbert problems. As
mentioned in Section~\ref{sec:space}, the space $X^{k+\beta}$
in Lemma~\ref{lem:rh} consists of the traces $f=F|_\T$ of
certain holomorphic functions $F$ on the exterior of the unit disk
$\D$. In this section it is more convenient to work with the full
mappings $F$. 

To cite the classical theory, we first consider holomorphic functions
on the \emph{interior} of the unit disk, introducing the (real) Banach spaces
\begin{align*}
  \mathcal Z_1 = \big\{ F_1 \in C^{k-1+\beta}(\overline\D,\C) : \text{$F_1$ is
  holomorphic on $\D$}  
  \big\},
  \qquad 
  \mathcal Y = C^{k+\beta}(\T,\R),
\end{align*}
and fixing a $C^{k-1+\beta}(\T,\C)$ coefficient function $a_1$ with winding
number $0$. From \cite[\S40]{muskhelishvili}, we have the following
results for the linear operator $\mathcal L_1 \maps \mathcal Z_1 \to
\mathcal Y$ defined by $\mathcal L_1F_1 = \Re\{a_1 F_1|_\T\}$. 
\newpage 
\begin{theorem}[Properties of $\mathcal L_1$]\hfill
  \label{thm:musk}
  \begin{enumerate}[label=\rm(\alph*)]
  \item $\ker \mathcal L_1 = \Span\{F_1^0\}$ is one-dimensional, where
    for $w \in \D$
    \begin{align*}
      F_1^0(w) = 
      \exp \left\{ 
      - \frac 1{4\pi} \int_\T \frac{\theta_1(\tau)}\tau \, d\tau 
      + \frac 1{2\pi} \int_\T
      \frac {\theta_1(\tau)}{\tau-w}
      \, d\tau 
      \right\},
    \qquad 
      \theta_1(w) :=  \arg\bigg( -\frac{\bar {a_1(w)}}{a_1(w)}\bigg).
    \end{align*}
  \item For $h \in \mathcal Y$, the general solution of $\mathcal
    L_1 F_1 = h$ is given by
  \begin{align*}
    \begin{aligned}
      F_1(w) &= \frac{F_1^0(w)}{\pi i} 
      \int_\T \frac 1{\tau - w}\frac {h(\tau)}{a_1(\tau) F_1^0(\tau)} \, d\tau\\
      &\qquad -\frac{F_1^0(w)}{2\pi i} 
      \int_\T \frac {h(\tau)}{a_1(\tau) F_1^0(\tau) \tau} \, d\tau
      + CF_1^0(w)
    \end{aligned}
    \qquad \fora w \in \D,
  \end{align*}
  where $C$ is an arbitrary real constant. In particular, $\mathcal
  L_1$ is Fredholm with index 1. 
  \end{enumerate}
\end{theorem}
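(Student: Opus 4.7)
The plan is to follow the classical Muskhelishvili strategy: construct a nonvanishing canonical solution $F_1^0$ of the homogeneous problem via the Schwarz integral formula, and then reduce the inhomogeneous problem to specifying the imaginary part on $\T$ of a holomorphic factor.

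\textbf{Homogeneous problem and construction of $F_1^0$.} I would look for a nowhere vanishing homogeneous solution in the form $F_1^0 = e^H$ with $H$ holomorphic on $\D$. The condition $\Re\{a_1 e^H\} = 0$ on $\T$ is equivalent to $a_1 e^H \in i\R$, i.e.
\[
  \arg a_1 + \Im H \equiv \tfrac\pi 2 \pmod\pi \ona \T.
\]
Rewriting in terms of $\theta_1 = \arg(-\bar{a_1}/a_1)$ this becomes a prescription of $\tfrac12 \theta_1$ for $\Im H|_\T$ (up to a multiple of $\pi$, which is harmless under the exponential). The winding number $0$ hypothesis on $a_1$ is precisely what guarantees that $\theta_1$ admits a globally continuous---in fact $C^{k-1+\beta}$---real-valued representative on $\T$. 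Given such boundary data, the Schwarz integral formula yields a holomorphic $H$ on $\D$ with the desired boundary imaginary part, unique up to an additive real constant. Applying the partial-fraction identity
\[
  \frac{\tau + w}{\tau(\tau - w)} = \frac{2}{\tau - w} - \frac{1}{\tau}
\]
converts the Schwarz kernel into the Cauchy-type kernel $1/(\tau-w)$ plus a normalization integral, producing exactly the explicit expression for $F_1^0$ in the statement. Since $\theta_1 \in C^{k-1+\beta}(\T)$, standard Privalov/Plemelj--Sokhotski boundary regularity for Cauchy integrals on $\T$ gives $H \in C^{k-1+\beta}$ up to $\T$, hence $F_1^0 \in \mathcal Z_1$, and $F_1^0$ is automatically nonvanishing.

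\textbf{Uniqueness of the kernel.} Any $F_1 \in \ker \mathcal L_1$ factors as $F_1 = F_1^0 G$ with $G$ holomorphic on $\D$ and continuous on $\overline\D$, since $F_1^0$ is everywhere nonzero. Writing $a_1 F_1^0 = i\lambda$ with $\lambda \in C^{k-1+\beta}(\T, \R)$ nonvanishing, the homogeneous condition turns into $\Re\{i\lambda G\} = -\lambda\,\Im G = 0$ on $\T$, forcing $\Im G|_\T \equiv 0$. By Schwarz reflection (or the Poisson integral representation), a function holomorphic on $\D$ with vanishing imaginary boundary values is a real constant, so $\ker \mathcal L_1 = \R \cdot F_1^0$.

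\textbf{Inhomogeneous problem and index.} For arbitrary $h \in \mathcal Y$, the same ansatz $F_1 = F_1^0 G$ reduces $\mathcal L_1 F_1 = h$ to $\Im G|_\T = -h/\lambda$, whose right-hand side lies in the appropriate Hölder space because $\lambda$ is smooth and uniformly bounded away from zero. The Schwarz formula again yields a $G$ with exactly this boundary imaginary part, unique up to a real additive constant $C$; translating back via $\lambda = -i a_1 F_1^0$ and applying the same partial-fraction identity reproduces the formula in the statement. Surjectivity combined with a one-dimensional kernel gives Fredholm index $1$.

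\textbf{Main obstacle.} Conceptually the argument is standard and everything hinges on the winding number $0$ assumption and the Schwarz integral representation. The real work is the Hölder bookkeeping: producing a $C^{k-1+\beta}$ lift of $\theta_1$, proving sharp boundary regularity for the Cauchy integrals appearing in $F_1^0$ and in the general solution, and ensuring that quotients such as $h/\lambda$ remain in the correct function space. These are classical estimates---exactly the ones proved in \cite[\S40]{muskhelishvili}---so the proof essentially amounts to translating that reference into the notation used here.
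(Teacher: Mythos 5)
Your argument is correct, and it is essentially the proof the paper is implicitly relying on: the paper states Theorem~\ref{thm:musk} without proof as a citation to \cite[\S40]{muskhelishvili}, and your construction---a nonvanishing canonical function $F_1^0=e^H$ with $\Im H|_\T=\tfrac12\theta_1$ obtained from the Schwarz formula (made single-valued by the winding-number-zero hypothesis), followed by the factorization $F_1=F_1^0G$ reducing both the kernel computation and the inhomogeneous problem to a Schwarz problem for $G$---is exactly the classical argument there, and it reproduces both explicit formulas after the partial-fraction identity $\frac{\tau+w}{\tau(\tau-w)}=\frac{2}{\tau-w}-\frac1\tau$. The remaining H\"older bookkeeping you defer (Privalov regularity of the Cauchy/Schwarz integrals, the $C^{k-1+\beta}$ branch of $\theta_1$, and quotients by the nonvanishing $\lambda$) is standard and is precisely what the cited reference supplies.
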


Using a simple change of variables, we now obtain an analogue of
Theorem~\ref{thm:musk} with the interior of the unit disk replaced by
the exterior and where imaginary instead of real parts are taken. The
relevant analogue of $\mathcal Z_1$ is 
\begin{align*}
  \mathcal Z_2 = \left\{ f \in C^{k-1+\beta}(\C \without \D,\C) : \text{$f$ is
  holomorphic on $\C \without \overline\D$, bounded at $\infty$}  
  \right\},
\end{align*}
and we define $\mathcal L_2 \maps \mathcal Z_2 \to
\mathcal Y$ by $\mathcal L_2 f = \Im\{a_2 F_2|_\T\}$. As before we
assume that $a_2 \in C^{k-1+\beta}(\T,\C)$ and has zero winding number.
With the identifications
\begin{align}
  \label{eqn:rhvars}
  a_1(w) = i\bar{a_2(w)},
  \qquad 
  F_1(w) = \bar{F_2(1/\bar w)},
\end{align}
one can easily check that $\mathcal L_1 F_1 = h$ if and only if $\mathcal L_2 F_2 = h$.
Moreover the mapping $F_2 \mapsto F_1$ is linear and invertible $\mathcal
Z_2 \to \mathcal Z_1$. This implies the following corollary of
Theorem~\ref{thm:musk}:
\begin{corollary}[Properties of $\mathcal L_2$]\hfill
  \begin{enumerate}[label=\rm(\alph*)]
  \item $\ker \mathcal L_2 = \Span\{F_2^0\}$ is one-dimensional, where for 
    $\abs w > 1$
    \begin{align}
      \label{eqn:F02}
      F_2^0(w) = 
      \exp \left\{ 
      \frac 1{4\pi} \int_\T \frac{\theta_2(\tau)}\tau
      \,d\tau
      + \frac w{2\pi} \int_\T
      \frac {\theta_2(\tau)}{\tau-w}
      \, \frac {d\tau}\tau
      \right\},
      \qquad 
      \theta_2(w) :=  \arg\bigg( \frac{a_2(w)}{\bar{a_2(w)}}\bigg).
    \end{align}
  \item For $h \in \mathcal Y$, the general solution of $\mathcal
    L_2 F_2 = h$ is given by, for $\abs w > 1$,
  \begin{align}
    \label{eqn:Fgen2}
    F_2(w) &= 
    -\frac{wF_2^0(w)}\pi 
    \int_\T \frac 1{\tau-w}
    \frac {h(\tau)}{a_2(\tau) F_2^0(\tau)} 
    \, \frac{d\tau}\tau
    -\frac{F_2^0(w)}{2\pi} 
    \int_\T \frac {h(\tau)}{a_2(\tau) F_2^0(\tau) } \,
    \frac{d\tau}\tau
    + CF_2^0(w),
  \end{align}
  where $C$ is an arbitrary real constant. 
  In particular, $\mathcal
  L_2$ is Fredholm with index 1. 
  \end{enumerate}
  \begin{proof}
    Make the change of variables \eqref{eqn:rhvars}, and use the
    substitution $\tau \mapsto 1/\tau$ to rewrite the integrals in
    \eqref{eqn:F02} and \eqref{eqn:Fgen2}.
  \end{proof}
\end{corollary}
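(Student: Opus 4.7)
The plan is to reduce the corollary to Theorem~\ref{thm:musk} via the change of variables~\eqref{eqn:rhvars}. First I would verify that the real-linear map $J \maps \mathcal Z_2 \to \mathcal Z_1$ defined by $(JF_2)(w) := \bar{F_2(1/\bar w)}$ is a Banach-space isomorphism: since $F_2 \in \mathcal Z_2$ is holomorphic on $\C \without \overline\D$ and bounded at infinity, its Laurent expansion involves only non-positive powers of $w$, and $J$ sends this to a convergent power series on $\D$ lying in $\mathcal Z_1$, with inverse given by the same formula.

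Next I would verify the intertwining $\mathcal L_1 \circ J = \mathcal L_2$. On $\T$ one has $\bar w = 1/w$, so $(JF_2)(w) = \bar{F_2(w)}$ there, and using $a_1 = i\bar{a_2}$ a short computation gives $\Re(a_1 \cdot JF_2) = \Re(i\bar{a_2}\,\bar{F_2}) = -\Re(ia_2 F_2) = \Im(a_2 F_2)$. Moreover $a_1$ and $a_2$ have the same winding number on $\T$ (multiplying by $i$ and conjugating together yield a net zero change). Theorem~\ref{thm:musk} therefore applies to $\mathcal L_1$, and its kernel, general-solution formula, Fredholm property, and index~$1$ are transported through $J$ to yield the analogous claims for $\mathcal L_2$; in particular $F_2^0 = J^{-1} F_1^0$, i.e.\ $F_2^0(w) = \bar{F_1^0(1/\bar w)}$.

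To extract the explicit expressions~\eqref{eqn:F02} and~\eqref{eqn:Fgen2}, I would conjugate the formulas of Theorem~\ref{thm:musk} evaluated at $1/\bar w$ and then apply the substitution $\tau \mapsto 1/\tau$ inside the integrals. Under this substitution $d\tau$ becomes $-d\tau/\tau^2$, which supplies the factor $d\tau/\tau$ visible in~\eqref{eqn:F02} and~\eqref{eqn:Fgen2}; the kernel $1/(\tau - 1/\bar w)$ becomes $w\tau/(w-\tau)$, producing the factor $w$ in front of the nonlocal Cauchy integral; and the identity $\theta_1 = \theta_2$ is immediate from $a_1 = i\bar{a_2}$, since $-\bar{a_1}/a_1 = a_2/\bar{a_2}$.

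The only nontrivial work is bookkeeping: tracking orientation, factors of $i$, and complex conjugation consistently through the substitution. For example, the constant $+\frac{1}{4\pi}\int_\T \theta_2/\tau\, d\tau$ appearing in~\eqref{eqn:F02} is recovered from $-\frac{1}{4\pi}\int_\T \theta_1/\tau\, d\tau$ in $F_1^0$ by observing that this integral is purely imaginary on $\T$ (since $d\tau/\tau$ is imaginary while $\theta_1$ is real), so that conjugation flips its sign. The analogous calculation for the general-solution formula proceeds identically. Beyond this careful bookkeeping no further ideas are needed; the Fredholm index $1$ is inherited automatically from $\mathcal L_1$ via the isomorphism $J$.
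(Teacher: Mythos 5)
Your proposal is correct and follows exactly the paper's intended argument: the paper's proof is precisely the change of variables $F_1(w)=\bar{F_2(1/\bar w)}$, $a_1=i\bar{a_2}$ together with the substitution $\tau\mapsto 1/\tau$ in the integrals, and your verifications of the intertwining $\Re(a_1\,JF_2)=\Im(a_2F_2)$, the identity $\theta_1=\theta_2$, and the sign flip of the purely imaginary constant term are the right bookkeeping details.
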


Assuming that our coefficient $a_2$ has the symmetries $a_2(\bar w) =
\bar{a_2(w)}$ and $a_2(e^{2\pi i/m} w) = a_2(w)$, 
it is easy to show that $\mathcal L_2$ restricts to a map $\mathcal L_3
\maps \mathcal Z_3 \to Y$ with the same kernel and Fredholm
index, where 
\begin{align*}
  \mathcal Z_3 &= \left\{ F \in \mathcal Z_2 
  : F(\bar w) = \bar{F(w)},\, F(e^{2\pi i/m} w) = F(w) \right\}.
\end{align*}
These symmetries of $a_2$ also imply
\begin{align}
  \label{eqn:simplifications}
  \int_\T \frac{\theta_2}\tau d\tau = 0,
  \qquad 
  \int_\T \frac h{a_2F_2^0\tau}\, d\tau =0
\end{align}
for all $h \in Y$, allowing for the formulas
\eqref{eqn:F02} and \eqref{eqn:Fgen2} to be simplified.

Finally we let
\begin{align*}
  \mathcal X &= \left\{ G \in C^{k+\beta}(\C \without \D,\C) : 
  \text{$G$ is holomorphic on $\C \without \overline\D$, 
  bounded at $\infty$}  \right\}
\end{align*}
be a subspace of $\mathcal Z_2$ with additional regularity, and 
\begin{align*}
  \mathcal X_3 &= \left\{ G \in \mathcal X : 
  G(e^{2\pi i/m}w) = e^{2\pi i/m} G(w)
  ,\, 
  G(\bar w) = \bar{G(w)}
  ,\,
  G'(\infty) = 0 \right\}
\end{align*}
be a further subspace with additional symmetries, to be compared with
those inherent in the definition of $X^{k+\beta}$. The derivative operator
$\tfrac d{dw}$ is invertible $\mathcal X_3 \to \mathcal Z_3$. Indeed,
the only potential complication is the uniqueness of inverses, and
this holds thanks to the constraint $G'(\infty) = 0$ in the definition of
$\mathcal X_3$ (equivalently, the Laurent series of a function in
$\mathcal X_3$ has no constant term). 
\begin{proof}[Proof of Lemma~\ref{lem:rh}]
  Let $\tilde X^{k+\beta} = \Span\{w\} + X^{k+\beta}$, and let
  $\mathcal T \maps \mathcal X_3 \to \tilde X^{k+\beta}$ be the trace
  operator, $\mathcal Tg = g|_\T$. We easily check that $\mathcal T$ is invertible, and
  consider the composite mapping 
  \begin{align*}
    \mathcal S = \mathcal L_3 \frac d{dw} \mathcal T^{-1} \maps \tilde X^{k+\beta} \longrightarrow Y^{k-1+\beta}.
  \end{align*}
  Our above arguments show that $\mathcal S$ is Fredholm with index
  $1$ and that its kernel is spanned by the function $g_0 \in \tilde X^{k+\beta}$
  characterized by $g_0' = F_2^0|_\T$. Using the usual formulas for
  the limiting values of Cauchy integrals (e.g. \cite[Equation
  16.4]{muskhelishvili}), and remembering the cancellation
  \eqref{eqn:simplifications}, we obtain
  \begin{align*}
    g_0'(w)
    = \exp \left\{ 
      \frac w{2\pi} \int_\T
      \frac {\tau^{-1}\theta_2(\tau)-w^{-1}\theta_2(w)}{\tau-w}
      \, d\tau
      \right\}.
  \end{align*}
  Furthermore, $F_2^0(\infty) = 1$ implies that the coefficient of $w$
  in the Fourier series of $g_0(w)$ is also $1$, and hence that $g_0 -
  w \in X^{k+\beta}$. Thus $g_0$ is a solution to \eqref{eqn:rh:fund},
  including the requirement that $g_0 - w \in X^{k+\beta}$. Moreover,
  any solution $g \in \tilde X^{k+\beta}$ of $\Im\{a_2 g\} = 0$ is of
  the form $g = Cg_0$ for some real constant $C$. Since $C$ is also
  the coefficient of $w$ in the Fourier series for $g$, we conclude
  that $g - w \in X^{k+\beta}$ if and only if $C = 1$, and so $g_0$ is
  indeed the unique solution of \eqref{eqn:rh:fund}. This completes
  the proof of \ref{lem:rh:fund}.

  To prove \ref{lem:rh:inv}, we note that $L \maps X^{k+\beta} \to
  Y^{k-1+\beta}$ is simply the restriction $\mathcal
  S|_{X^{k+\beta}}$. Since $X^{k+\beta} \sub \tilde X^{k+\beta}$ has
  codimension 1 and $X^{k+\beta} \cap \ker \mathcal S = \varnothing$,
  the above Fredholm properties of $\mathcal S$ imply that $L$ is
  invertible. Taking the limit of \eqref{eqn:Fgen2} as $w$ approaches
  $\T$ and using the cancellation \eqref{eqn:simplifications} to drop
  one of the terms, we finally recover \eqref{eqn:rh:inv}.
\end{proof}

\subsection{Derivatives of the stream function}\label{sec:stream}
In this section we complete the proof of Lemma~\ref{lem:stream}
expressing the partial derivatives of the relative stream function
$\Psi$ in terms of the trace $\phi$ of the conformal mapping $\Phi$.
From potential theory, we know that the (non-relative) stream function
$\psi$ is given by
\begin{align*}
  \psi(z) = \frac 1{2\pi} \iint_D \log\abs{z-\zeta}\, d\zeta.
\end{align*}
Differentiating inside the integral, and using the complex form of
Green's theorem, we find that $\dell_z \psi(z)$ can be written as the
contour integral
\begin{align*}
  \dell_z \psi(z) = - \frac 1{4\pi} \int_D \frac 1{z-\zeta} \, d\zeta
  = \frac 1{8\pi i} \int_{\dell D} \frac{\bar z - \bar
  \zeta}{z - \zeta}\, d\zeta,
\end{align*}
provided $z \notin D$. Parametrizing $\dell D$ using $\zeta = \phi(\tau)$ and
setting $z = \Phi(w)$, we obtain
\begin{align*}
  \dell_z \psi(\Phi(w)) = \frac 1{8\pi i}
  \int_\T
  \frac{\bar{\Phi(w)}-\bar{\phi(\tau)}}
  {\Phi(w)-\phi(\tau)} \phi'(\tau)\, d\tau.
\end{align*}
Taking the limit as $w$ approaches $\T$, we conclude that
\begin{align*}
  (\dell_z \psi) \circ \phi = \tfrac 14 \Ca(\phi)\bar\phi.
\end{align*}
For higher derivatives of $\psi$, an additional step is needed to
obtain formulas valid for $w \in \T$. Arguing as before we find
\begin{align*}
  \dell_z^2\psi^-(z) &= 
  \frac 1{4\pi}\int_D \frac 1{(z-\zeta)^2}\, d\zeta
  = \frac 1{8\pi i} \int_{\dell D} \frac {\bar z - \bar \zeta}{(z-\zeta)^2}\, d\zeta
\end{align*}
for $z \notin D$,
but now additional care is needed to define the contour integral
on the right hand side for $z \in \dell D$. Rewriting the integral
using $\Phi$ as before, and integrating by parts, we find
\begin{align*}
  \dell_z^2\psi^-(\Phi(w))
  = \frac 1{8\pi i} \int_\T \frac {\bar {\Phi(w)} - \bar{\Phi(\tau)}}
  {(\Phi(w)-\Phi(\tau))^2}\, \Phi'(\tau) d\tau
  = \frac 1{8\pi i} \int_\T 
  \frac{ \frac{\bar{w^2\Phi'(w)}}{4\Phi'(w)}
  -  \frac{\bar{\tau^2\Phi'(\tau)}}{4\Phi'(\tau)}}
  {\Phi(w)-\Phi(\tau)} \Phi'(\tau)\, d\tau.
\end{align*}
Now we are free to take the limit as $w$ approaches $\T$, getting
  \begin{align*}
    \dell_z^2\psi^- \circ \phi
    =
    \tfrac 14 \Ca(\phi) F_2(\phi) 
    \qquad \text{where} \quad
    F_2(\phi) := \frac{w^2\phi'}{4\bar{\phi'}}.
  \end{align*}
Arguing similarly for $\dell^3_z \psi$ we find that
\begin{align*}
  (\dell_z^3\psi^-) \circ \phi  
  = \tfrac 14\Ca(\phi) F_3(\phi) ,
  \qquad \text{where} \quad
  F_3(\phi) := -\frac 14 \left(
  \frac{2w^3\phi'}{\bar{\phi'}^2}
  + \frac{w^4\phi''}{\bar{\phi'}^2}
  + \frac{w^2 \phi' \bar{\phi''}}{ \bar{\phi'}^3} \right).
\end{align*}
The formulas \eqref{eqn:Psizw} for the derivatives of $\Psi$ now
follow immediately from $\Psi = \psi - \frac\Omega 2 z\bar z$. 

\section*{Acknowledgments.}
Nader Masmoudi was partially supported by NSF-DMS grant 1716466.
Miles H.~Wheeler was partially supported by NSF-DMS grant 1400926.

\noindent \textsc{Courant Institute of Mathematical Sciences,
New York University,
251 Mercer Street,
New York, NY 10012,
USA},\\
\textit{E-mail address:} \texttt{zineb@cims.nyu.edu}\\
  
\noindent \textsc{Department of Mathematics, 
New York University in Abu Dhabi, 
Saadiyat Island, 
P.O. Box 129188, 
Abu Dhabi, 
United Arab Emirates};
\textsc{
Courant Institute of Mathematical Sciences, 
New York University, 
251 Mercer Street, 
New York, NY 10012, 
USA},\\
\textit{E-mail address:} \texttt{masmoudi@cims.nyu.edu}\\

\noindent \textsc{Faculty of Mathematics,
University of Vienna,
Oskar-Morgenstern-Platz 1,
1090 Wien,
Austria},\\
\textit{E-mail address:} \texttt{miles.wheeler@univie.ac.at}

\end{document}